\numberwithin{figure}{section}
\newtheorem{theorem}{Theorem}[section]
\newtheorem{lemma}[theorem]{Lemma}
\newtheorem{proposition}[theorem]{Proposition}
\theoremstyle{definition}
\newtheorem{definition}[theorem]{Definition}
\newtheorem{remark}[theorem]{Remark}
\numberwithin{equation}{section}
\def\vint_#1{\mathchoice%
          {\mathop{\kern 0.2em\vrule width 0.6em height 0.69678ex depth -0.58065ex
                  \kern -0.8em \intop}\nolimits_{\kern -0.4em#1}}%
          {\mathop{\kern 0.1em\vrule width 0.5em height 0.69678ex depth -0.60387ex
                  \kern -0.6em \intop}\nolimits_{#1}}%
          {\mathop{\kern 0.1em\vrule width 0.5em height 0.69678ex depth -0.60387ex
                  \kern -0.6em \intop}\nolimits_{#1}}%
          {\mathop{\kern 0.1em\vrule width 0.5em height 0.69678ex depth -0.60387ex
                  \kern -0.6em \intop}\nolimits_{#1}}}
\def\vintslides_#1{\mathchoice%
          {\mathop{\kern 0.1em\vrule width 0.5em height 0.697ex depth -0.581ex
                  \kern -0.6em \intop}\nolimits_{\kern -0.4em#1}}%
          {\mathop{\kern 0.1em\vrule width 0.3em height 0.697ex depth -0.604ex
                  \kern -0.4em \intop}\nolimits_{#1}}%
          {\mathop{\kern 0.1em\vrule width 0.3em height 0.697ex depth -0.604ex
                  \kern -0.4em \intop}\nolimits_{#1}}%
          {\mathop{\kern 0.1em\vrule width 0.3em height 0.697ex depth -0.604ex
                  \kern -0.4em \intop}\nolimits_{#1}}}
\newcommand{\aveint}[2]{\mathchoice%
          {\mathop{\kern 0.2em\vrule width 0.6em height 0.69678ex depth -0.58065ex
                  \kern -0.8em \intop}\nolimits_{\kern -0.45em#1}^{#2}}%
          {\mathop{\kern 0.1em\vrule width 0.5em height 0.69678ex depth -0.60387ex
                  \kern -0.6em \intop}\nolimits_{#1}^{#2}}%
          {\mathop{\kern 0.1em\vrule width 0.5em height 0.69678ex depth -0.60387ex
                  \kern -0.6em \intop}\nolimits_{#1}^{#2}}%
          {\mathop{\kern 0.1em\vrule width 0.5em height 0.69678ex depth -0.60387ex
                  \kern -0.6em \intop}\nolimits_{#1}^{#2}}}
\newcommand{\eps}{\varepsilon}
\newcommand{\R}{\mathds{R}}
\newcommand{\N}{\mathbb{N}}
\renewcommand{\liminf}{\operatornamewithlimits{lim \, inf}}
\newcommand{\supp}{\operatorname{supp}}
\renewcommand{\div}{\nabla \cdot}
\numberwithin{equation}{section}
\def\dx{\, {\mathrm d}x}
\def\dt{\, {\mathrm d}t}
\def\ds{\, {\mathrm d}s}
\def\d{\, {\mathrm d}}
\def\loc{\mathrm{loc}}
\def\Leb{\mathcal{L}}
\def\d{\, {\mathrm d}}
\def\BV{BV}
\newcommand{\Om}{\Omega}
\newcommand{\limminus}{{\mathchoice{\raise.17ex\hbox{$\scriptstyle -$}}
                {\raise.17ex\hbox{$\scriptstyle -$}}
                {\raise.1ex\hbox{$\scriptscriptstyle -$}}
                {\scriptscriptstyle -}}}
\newcommand{\limplus}{{\mathchoice{\raise.17ex\hbox{$\scriptstyle +$}}
                {\raise.17ex\hbox{$\scriptstyle +$}}
                {\raise.1ex\hbox{$\scriptscriptstyle +$}}
                {\scriptscriptstyle +}}}
\newcommand{\xint}[3]{\,{\setbox0=\hbox{$#1{#2#3}{\int}$}
\vcenter{\hbox{$#2#3$}}\kern-.5\wd0}}
\newcommand{\mint}{\mathchoice
{\xint\displaystyle\textstyle-}
{\xint\textstyle\scriptstyle-}
{\xint\scriptstyle\scriptscriptstyle-}
{\xint\scriptscriptstyle\scriptscriptstyle-}
\!\int}
\newcommand{\Lip}{\operatorname{Lip}}
\newcommand{\beq}{\begin{equation}}
\newcommand{\eeq}{\end{equation}}
\newcommand{\arsinh}[1]{\text{arsinh(}}
\renewcommand{\div}{\text{div}}
\newcommand{\I}{\textbf{I}}
\newcommand{\II}{\textbf{II}}
\newcommand{\III}{\textbf{III}}
\newcommand{\IV}{\textbf{IV}}
\numberwithin{equation}{section}
\newcommand{\X}{\mathcal{X}}
\renewcommand{\N}{\mathcal{N}}
\newcommand{\NN}{\mathds{N}}
\newcommand{\K}{\mathcal{K}}
\renewcommand{\L}{\mathcal{L}}
\newcommand{\weak}{\rightharpoondown}
\newcommand{\xweak}{\xrightharpoondown}
\newcommand{\longweak}{\xweak{~~~}}
\renewcommand{\BV}{\mathrm{BV}}
\newcommand{\Der}{\mathrm{Der}}
\newcommand{\Omstar}{\Om^*}
\newcommand{\Feps}{\mathcal{F}_{\varepsilon}}
\newcommand{\frd}{\mathfrak{d}}
\newcommand{\subjclass}[2][1991]{
  \let\@oldtitle\@title
  \gdef\@title{\@oldtitle\footnotetext{#1 \emph{Mathematics subject classification.} #2}}
}
\newcommand{\keywords}[1]{
  \let\@@oldtitle\@title
  \gdef\@title{\@@oldtitle\footnotetext{\emph{Key words and phrases.} #1.}}
}
\title{Existence of parabolic minimizers to the total variation flow on metric measure spaces}
\author{Vito Buffa\footnote{Bologna, Italy; e-mail: \texttt{bff.vti@gmail.com}; \textsf{ORCID iD}: 0000-0003-4175-4848.}, Michael Collins\footnote{Department of Data Science, Friedrich-Alexander-Universit\"at  Er\-langen-N\"urnberg, Cauerstrasse 11, 91058 Erlangen, Germany; e-mail: \texttt{collins@math.fau.de}.}, Cintia Pacchiano Camacho\footnote{Department of Mathematics and Systems Analysis, Aalto University, Espoo, Finland; e-mail: \texttt{cintia.pacchiano@aalto.fi}.}}
\date{}
\subjclass[2010]{49J27, 49J40, 49J45, 30L99, 35A15.}
\keywords{Existence, variational solutions, total variation flow, parabolic minimizers, metric measure spaces}
\begin{document}

\maketitle

\begin{abstract}\noindent
We give an existence proof for variational solutions $u$ associated to the total variation flow. Here, the functions being considered are defined on a metric measure space $(\X, d, \mu)$ satisfying a doubling condition and supporting a Poincar\'e inequality. For such parabolic minimizers that coincide with a time-independent Cauchy-Dirichlet datum $u_0$ on the parabolic boun\-dary of a space-time-cylinder $\Om \times (0, T)$ with $\Om \subset \X$ an open set and $T > 0$, we prove existence in the weak parabolic function space $L^1_w(0, T; \BV(\Om))$. In this paper, we generalize results from a previous work by B\"ogelein, Duzaar and Marcellini by introducing a more abstract notion for $\BV$-valued parabolic function spaces. We argue completely on a variational level.
\end{abstract}

\bigskip

\tableofcontents

\vspace{1cm}

\section{Introduction}

The aim of this paper is to show existence for parabolic minimizers to the total variation flow on metric measure spaces. More precisely, we consider minimizers of integral functionals that are related to scalar functions $u: \Omega \times (0, T) \to \R$ which satisfy the inequality
\begin{align}\label{introquasi}
\iint_{\Omega_T} u\partial_t\varphi\d\mu\dt + \int_0^T \|Du(t)\|(\Omega)\dt \leq \int_0^T \|D(u + \varphi)(t)\|(\Omega)\dt,
\end{align}
for all test functions $\varphi \in \Lip_c(\Om_T)$ where $\|Du(t)\|(\Omega)$ denotes the total variation of $u(\cdot, t)$ on $\Omega$. Here, $\Omega \subset \X$ is a bounded domain, where $(\X, d, \mu)$ is a metric measure space with a metric $d$ and a measure $\mu$.

In the setting of a metric measure space, the classical calculus known from the Euclidean space $\R^n$ is no longer available and instead of distributional derivatives, the space $\BV$ of functions with bounded variation has to be introduced by a relaxation approach \cite{MirandaGoodSpaces} that makes use of the notion of upper gradients. An alternative approach to $\BV$ via derivations \cite{DiMarino} is also presented.

This paper deals with parabolic minimizers on parabolic cylinders $\Om_T := \Om \times (0, T)$ with $\Om \subset \X$ bounded and open and $T > 0$. $\X$ denotes a metric measure space that fulfills a doubling property with respect to the metric $d$ and the measure $\mu$ and supports a suitable Poincar\'e inequality. We refer to Section \ref{sec-setting} for exact definitions and the setting of the relevant spaces. In this paper, we are going to generalize results which have recently been proven in \cite{boegeleintv}, while we restrict ourselves to the simplest case where the functional in question depends only on the total variation itself.

Since the beginning of the 21$^\text{st}$ century, doubling metric measure spaces have been studied quite extensively, see for example \cite{Cheeger,Franchi,Hajlasz1,Hajlasz2,HajlaszKoskela,heinonenkoskela,Kronz,Shanmugalingam,ShanmugalingamHarmonic} and especially \cite{Bjoern} for an overview and further references. The idea of considering variational problems on metric measure spaces is based on independent proofs by Grigor'yan \cite{grig} and Saloff-Coste \cite{saloffcoste} of the fact that on Riemannian manifolds the doubling property and Poincar\'e inequality are equivalent to a certain Harnack-type inequality for solutions of the heat equation. Instead of Riemannian manifolds though, we are interested in more general spaces. 

Existence for parabolic problems on metric measure spaces has already been dealt with in \cite{CollinsHeran} and \cite{Collins}, where the former paper treats boundary data independent from time and the latter treats time-dependent boundary data. In both cases, the authors considered integral functionals with $p$-growth for $p > 1$. This paper deals with the total variation flow which corresponds to the case $p = 1$. In the elliptic case, existence for functions of least gradient has been considered by Korte, Lahti, Li and Shanmugalingam in \cite{LeastGradients}. The investigation of parabolic problems on metric measure spaces started not long ago with the work of Kinnunen, Marola, Miranda and Paronetto, \cite{KinnunenMarola}, concerning regularity problems. Since then, the most contributions in this field of research have been made to stability theory,   \cite{habnew2,hab1,MaasaloZatorska}, and regularity problems, \cite{habnew3,HabermannIntegrability,MarolaMasson,masson2,masson}. When concerning issues of regularity, one tries to establish for instance H\"older continuity of a solution that is assumed to be an element of the parabolic Newtonian space $L^p(0, T; \N^{1,p}(\Om))$. By that function space, we denote those $u: (0, T) \to \N^{1,p}(\Om)$, such that the mapping $t \mapsto \|u(t)\|_{\N^{1,p}(\Om)}^p$ is integrable over the interval $(0, T)$.

When considering the total variation flow, one cannot simply set $p = 1$ in the parabolic function space since the Newtonian space $\N^{1,1}$ lacks important properties such as reflexivity. Therefore, it is replaced by $\BV$, the space of functions with bounded variation. Choosing the space $L^1(0, T; \BV(\Om))$ still would not be appropriate for our tasks though, since Bochner measurability is too restrictive to ask. To this end, we are going to consider a weak version of this parabolic space, denoted by $L^1_w(0, T; \BV(\Om))$. The Bochner measurability for Banach space valued functions is going to be replaced by a weaker measurability condition that makes use of the derivation approach for $\BV$. See Section \ref{sec-setting} for details.

Our aim is to show existence for a parabolic minimizer of the functional
\begin{align*}
\mathcal{F}(v, \Omstar_T) := \int_0^T\|Dv(t)\|(\Omstar)\dt
\end{align*}
in such a parabolic Newtonian space, which will be done via the concept of global variational solutions. The integrand denotes the total variation of $v$ on the time-slice $t \in (0, T)$ and $\Omstar$ is a bounded and open domain in $\X$, slightly larger than $\Om$. Again, we refer the reader to Section \ref{sec-setting} for the exact definitions and in particular to Theorem \ref{maintheo1} and Theorem \ref{maintheo2}, respectively, to find the exact statements.

In the paper at hand, we are going to put the focus on how to overcome certain difficulties given by the setting of metric measure spaces. The main difficulty that leads to such obstacles is given by the fact that the standard definition of the space $\BV$ on a metric measure space does not rely on an integration by parts formula and therefore $\BV$ cannot be characterized as the dual space of a separable Banach space as suggested by \cite[Remark 3.12]{AmbFusPal}. This makes it difficult to find a weak measurability condition similar to the one posed in \cite{boegeleintv} and other works concerning the total variation flow and functionals with linear growth. To overcome this obstacle and to be able to give a suitable definition of a parabolic function space, we make use of an alternative approach: in the Euclidean case, a function $u: (0, T) \to \BV(\Om)$ is said to be weakly measurable if the mapping
\begin{align*}
t \mapsto \int_{\Om}u(t)\div(\phi)\dx
\end{align*}
is measurable with respect to the Lebesgue measure on $(0, T)$ for every test vector field $\phi$. On a metric measure space, one can explain a divergence operator for derivations $\mathfrak{d}$ and find an equivalent characterization of $\BV$ which relies on an integration by parts formula that is based on derivations and their divergence, see \cite{BuffaPhD, BuffaComiMiranda, DiMarino} and Section \ref{sec-setting} in this paper. Thus in the metric setting, we say that $u: (0, T) \to \BV(\Om)$ is weakly measurable if the mapping
\begin{align*}
t \mapsto \int_{\Om}u(t)\div(\mathfrak{d})\d\mu
\end{align*}
is measurable with respect to the Lebesgue measure on $(0, T)$ for every derivation $\mathfrak{d}$ in a certain class.

Once having a reasonable definition of the underlying function space at hand, our method of proof is aligned to the one proposed in the work of B\"ogelein, Duzaar and Marcellini \cite{boegeleintv}. For mappings $v: \Omstar \times (0, \infty) \to \R$ and time-independent Cauchy-Dirichlet data $u_0: \Omstar \to \R$ such that the condition $v = u_0$ is fulfilled on the parabolic boundary of $\Om \times (0, \infty)$ in the sense that $v(t) = u_0$ holds $\mu$-almost everywhere on $\Omstar \setminus \Om$ for almost every $t \in (0, T)$, we are going to consider the relaxed convex functionals 
\begin{align*}
\mathcal{F}_{\varepsilon}[v] := \int_0^T e^{-\frac{t}{\varepsilon}}\left[\frac{1}{2}\int_{\Omstar}|\partial_tv|^2\d\mu + \frac{1}{\varepsilon}\|Dv(t)\|(\Omstar)\right]\dt
\end{align*} 
for $\varepsilon \in (0, 1]$. The properties of the total variation allow the application of standard methods in the calculus of variations to ensure the existence of minimizers $u_{\varepsilon}$ of $\mathcal{F}_{\varepsilon}$. To prove the existence of the minimizers $u_{\varepsilon}$ of the relaxed convex functionals $\mathcal{F}_{\varepsilon}$ in our setting, we are going to apply a compactness result by Simon \cite{Simon}, see Lemma \ref{compactness} in Appendix \ref{app-compactness}. In particular, this is used to identify the limit of a sequence of functions in $L^1_w(0, T; \BV(\Omstar))$ since there are no standard compactness theorems that can be applied to this space. Now, these minimizers are expected to converge to parabolic minimizers as in (\ref{introquasi}) based on an idea in the Euclidean case. There, the authors compute the corresponding Euler-Lagrange-equation for $\mathcal{F}_{\varepsilon}$, see \cite{boegeleintv} for details. In order to establish this convergence, we are going to argue completely on the level of minimizers. To this end, we follow an idea of Lichnewsky \& Temam \cite{LichnewskyTemam} and thus introduce the concept of evolutionary variational solutions similarly to \cite{boegeleintv}. Precisely, in line with Definition \ref{varsoldef}), we are looking at continuous mappings $u: (0, T) \to L^2(\Omstar)$ that are also in the parabolic space $L^1_w(0, T; \BV_{u_0}(\Om))$ and fulfill the variational inequality 
\begin{align*}
\begin{aligned}
\int_0^T\|Du(t)\|(\Om^*)\dt & \leq \int_0^T\left[\int_{\Omstar}\partial_tv(v-u)\d\mu + \|Dv(t)\|(\Omstar)\right]\dt \\
& \mskip+25mu -\frac{1}{2}\|(v-u)(T)\|_{L^2(\Omstar)}^2 + \frac{1}{2}\|v(0) - u_0\|_{L^2(\Omstar)}^2.
\end{aligned}
\end{align*}
By introducing a mollification in time in order to establish the existence of an $L^2$-time derivative, energy estimates are shown which lead to the convergence of the minimizers $u_{\varepsilon}$ of $\mathcal{F}_{\varepsilon}$ to the variational solution $u$.
Finally, it can be shown that these variational solutions are actually parabolic minimizers.

\section{Setting and statement of results}\label{sec-setting}

\subsection{Notations}\label{subsec-notation}

Let $(\X, d, \mu)$ be a separable, connected metric measure space, i.e. $(\X, d)$ is a complete, separable and connected metric space endowed with a Borel measure $\mu$ on $\X$. The measure $\mu$ is assumed to fulfill a \textit{doubling property}, i.e. there exists a constant $c \geq 1$, such that
\begin{align}\label{double}
0 < \mu\left(B_{2r}(x)\right) \leq c\cdot \mu\left(B_r(x)\right) < \infty
\end{align}
for all radii $r > 0$ and centres $x \in \X$. Here $B_r(x) \coloneqq \{y \in \X: d(x, y) < r\}$ denotes the open ball with radius $r$ and centre $x$ with respect to the metric $d$. The \textit{doubling constant} is defined as
\begin{align}
c_d \coloneqq \inf\{c \geq 1: (\ref{double}) \text{ holds true}\}.
\end{align}
A complete metric measure space that fulfills the doubling property is proper, meaning that all closed and bounded subsets are compact, see \cite[Proposition 3.1]{Bjoern}. 

Following the concept of Heinonen and Koskela \cite{heinonenkoskela}, we call a Borel function $g: \X \to [0, \infty]$ an \textit{upper gradient} for an extended real-valued function $u: \X \to [-\infty, \infty]$ if for all $x, y \in \X$ and all rectifiable curves $\gamma: [0, L_{\gamma}] \to \X$ with $\gamma(0) = x, \gamma(L_{\gamma}) = y$ there holds 
\begin{align}\label{upgra}
|u(x) - u(y)| \leq \int_{\gamma}g\ds.
\end{align}
Note that we make use of the convention that $\infty - \infty = \infty$ and $(-\infty) - (-\infty) = -\infty$ as in \cite{Bjoern}. \\
Moreover, if a non-negative and measurable function $g$ fulfills (\ref{upgra}) for $p$-almost every curve as before, meaning that the family of curves for which (\ref{upgra}) fails has $p$-modulus zero, then $g$ is called $p$\textit{-weak upper gradient}. 
\\For $1 \leq p < \infty$ and a fixed open subset $\Omega \subset \X$ we define the vector space
\begin{align*}
\tilde{\N}^{1,p}(\Omega) \coloneqq \{u \in L^p(\Omega): \exists~p\text{-weak upper gradient } g \in L^p(\Omega) \text{ of } u\}.
\end{align*}
$L^p(\Omega)$ denotes the usual Lebesgue space. The space $\tilde{\N}^{1,p}(\Omega)$ is endowed with the semi-norm
\begin{align}\label{newt-norm}
\|u\|_{\tilde{\N}^{1,p}(\Omega)} \coloneqq \|u\|_{L^p(\Omega)} + \|g_u\|_{L^p(\Omega)},
\end{align}
where $g_u$ denotes the \textit{minimal $p$-weak upper gradient} of $u$, i.e. $\|g_u\|_{L^p(\Omega)} = \inf \|g\|_{L^p(\Omega)}$, with the infimum being taken over all $p$-weak upper gradients of $u$. Introducing the equivalence relation 
\begin{align*}
u \sim v \Longleftrightarrow \|u-v\|_{\tilde{\N}^{1,p}(\Omega)} = 0,
\end{align*}
we define the \textit{Newtonian space} $\N^{1,p}(\Omega)$ as the quotient space
\begin{align*}
\N^{1,p}(\Omega) \coloneqq \tilde{\N}^{1,p}(\Omega)/\sim,
\end{align*}
which we endow with the quotient norm $\|\cdot\|_{\mathcal{N}^{1,p}(\Omega)}$ defined as in \eqref{newt-norm}. Since this definition clearly depends on the metric $d$ and the measure $\mu$, we abuse the notation $\N^{1,p}(\Omega)$ as an abbreviation for $\N^{1,p}(\Omega, d, \mu)$. For more details on metric measure spaces we refer the reader to \cite{Bjoern,SobolevSpacesBook}. \\
In addition to the doubling property, we demand that the metric measure space $(\X, d, \mu)$ supports a weak $(1, 1)$-Poincar\'e inequality, in the sense that there exist a constant $c_P > 0$ and a dilatation factor $\tau \geq 1$ such that for all open balls $B_{\varrho}(x_0) \subset B_{\tau\varrho}(x_0) \subset \X$, for all $L^1$-functions $u$ on $\X$ and all upper gradients $\tilde{g}_u$ of $u$ there holds
\begin{align}\label{poincare}
\mint_{B_{\varrho}(x_0)}|u - u_{\varrho, x_0}|\d\mu \leq c_P\varrho\mint_{B_{\tau\varrho}(x_0)}\tilde{g}_u\d\mu,
\end{align}
where the symbol 
\begin{align*}
u_{\varrho, x_0} := \mint_{B_{\varrho}(x_o)}u\hspace{0.5mm}\d\mu := \frac{1}{\mu(B_{\varrho}(x_0))}\int_{B_{\varrho}(x_o)}u\hspace{0.5mm}\d\mu
\end{align*}
denotes the mean value integral of the function $u$ on the ball $B_{\varrho}(x_0)$ with respect to the measure $\mu$. We will omit the word 'weak' from here on and simply refer to this inequality as 'Poincar\'e inequality'. Poincar\'e inequalities on metric measure spaces have been studied quite extensively in the literature, see for example \cite{BjoernPoincare,BjoernPoincare2,heinonenkoskela2,KeithZhong,KinnunenCapacity,KosShanTuo,Laakso,saloffcoste,saloffcoste2}.\\

\begin{remark}
Throughout the paper, the doubling condition and the weak $(1, 1)$-Poincaré inequality are only used in the definition of derivations (the doubling condition implies that the space $\X$ is proper, hence bounded and closed sets are compact) and in the compactness result (see Appendix \ref{app-compactness}), which is exploited in the existence proof in Section \ref{sec-existence}. However, we decided to include both properties in the preliminary conditions of the underlying metric measure space as they play a paramount role in regularity theory, for which the presented existence proof in this manuscript lays the ground work.
\end{remark}

Now, we recall the definition and some basic properties of functions of bounded variation, see \cite{MirandaGoodSpaces}.
For $u \in L^1_{\text{loc}}(\X)$, we define the total variation of $u$ on $\X$ to be
\begin{align*}
\|Du\|(\X) \coloneqq\inf\left\{\liminf_{i\to \infty}\int_{\X}g_{u_i}\d\mu: u_i \in \Lip_{\text{loc}}(\X),~u_i \to u \text{ in } L^1_{\text{loc}}(\X)\right\},
\end{align*}
where each $g_{u_i}$ is the minimal $1$-weak upper gradient of $u_i$. We say that a function $u \in L^1(\X)$ is of \textit{bounded variation}, by notation $u \in \BV(\X)$, if $\|Du\|(\X) < \infty$. By replacing $\X$ with an open set $\Om \subset \X$ in the definition of the total variation, we can define $\|Du\|(\Om)$. The norm in $\BV$ is given by
\begin{align*}
\|u\|_{\BV(\Om)} \coloneqq \|u\|_{L^1(\Om)} + \|Du\|(\Om).
\end{align*}
It was shown in \cite[Theorem 3.4]{MirandaGoodSpaces} that for $u \in \BV(\X)$, the total variation $\|Du\|$ is the restriction to the class of open sets of a finite Radon measure defined on the class of all subsets of $\X$. This outer measure is obtained from the map $\Om \mapsto \|Du\|(\Om)$ on open sets $\Om \subset \X$ via the standard Carath\'eodory construction. Thus, for an arbitrary set $A \subset \X$ one can define
\begin{align*}
\|Du\|(A) \coloneqq \inf\left\{\|Du\|(\Om): \Om \text{ open}, A \subset \Om\right\}.
\end{align*}

\subsection{Parabolic function spaces}

For a Banach space $B$ and $T > 0$, the space
\begin{align*}
C^0([0, T]; B)
\end{align*}
consists of all continuous functions $u: [0, T] \to B$ with the norm
\begin{align*}
\|u\|_{C^0([0,T];B)} \coloneqq \max_{0 \leq t \leq T}\|u(t)\|_B.
\end{align*}
Naturally, for $\alpha \in (0, 1]$, the space
\begin{align*}
C^{0, \alpha}([0, T]; B)
\end{align*}
consists of those functions $u \in C^0([0, T]; B)$, for which additionally
\begin{align*}
\sup_{s, t \in [0, T]}\frac{\|u(s) - u(t)\|_B}{|s - t|^{\alpha}} < \infty
\end{align*}
holds true.

In the Euclidean case, it can be shown via integration by parts that the space $\BV$ can be written as the dual space of a separable Banach space, see \cite[Remark 3.12]{AmbFusPal}. Since this tool is not available in the metric setting (at least not in the sense as it is understood in the Euclidean case), a different approach has to be taken.

\subsubsection{The space $\BV$ via derivations}

For the following definitions and properties, we are going to follow \cite{DiMarino}, see also \cite{BuffaPhD,BuffaComiMiranda}. While in the literature derivations are explained for Lipschitz functions with bounded support, we write $\Lip_c$ instead since the underlying space $\X$ in this paper is proper.

Let 
$L^0(\X)$ denote the space of measurable functions on $\X$. By a (Lipschitz) \textit{derivation} we denote a linear map $\mathfrak{d}: \Lip_c(\X) \to L^0(\X)$ such that the Leibniz rule
\begin{align*}
\mathfrak{d}(fg) = f\mathfrak{d}(g) + g\mathfrak{d}(f)
\end{align*}
holds true for all $f, g \in \Lip_c(\X)$ and for which there exists a function $h \in L^0(\X)$ such that for $\mu$-a.e. (almost every) $x \in \X$ and all $f \in \Lip_c(\X)$ there holds
\begin{align}\label{DerRep}
|\mathfrak{d}(f)|(x) \leq h(x)\cdot\Lip_{a}(f)(x),
\end{align}
where $\Lip_{a}(f)(x)$ denotes the asymptotic Lipschitz constant of $f$ at $x$, i.e.
\begin{align*}
\Lip_a(f)(x) := \lim_{r \searrow 0} \sup_{y \in B_r(x)}\frac{|f(x) - f(y)|}{d(x, y)}.
\end{align*}
The set of all such derivations will be denoted by $\Der(\X)$. The smallest function $h$ satisfying (\ref{DerRep}) will by denoted by $|\mathfrak{d}|$ and we are going to write $\mathfrak{d} \in L^p$ when we mean to say $|\mathfrak{d}| \in L^p$.

For given $\mathfrak{d} \in \Der(\X)$ with $\mathfrak{d} \in L^1_{\mathrm{loc}}(\X)$ we define the \textit{divergence operator} $\div(\mathfrak{d}): \Lip_c(\X) \to \R$ as
\begin{align*}
f \mapsto -\int_{\X}\mathfrak{d}(f)\d\mu.
\end{align*}
We say that $\div(\mathfrak{d}) \in L^p(\X)$ if this operator admits an integral representation via a unique $L^p$-function $\tilde{h}$, i.e.
\begin{align*}
\int_{\X}\mathfrak{d}(f)\d\mu = - \int_{\X}\tilde{h}f\d\mu.
\end{align*}

For all $p, q \in [1, \infty]$ we shall set
\begin{align*}
\Der^p(\X) \coloneqq \{\mathfrak{d} \in \Der(\X): \mathfrak{d} \in L^p(\X)\}
\end{align*}
and 
\begin{align*}
\Der^{p, q}(\X) \coloneqq \{\mathfrak{d} \in \Der(\X): \mathfrak{d} \in L^p(\X),~\div(\mathfrak{d}) \in L^q(\X)\}.
\end{align*}
When $p=\infty=q$ we will write $\Der_{b}(\X)$ instead of $\Der^{\infty, \infty}(\X)$. The domain of the divergence is characterized as
\begin{align*}
D(\div) \coloneqq \{\mathfrak{d} \in \Der(\X): |\mathfrak{d}|, \div(\mathfrak{d}) \in L^1_{\loc}(\X)\}.
\end{align*}

For $u \in L^1(\X)$ we say that $u$ is of \textit{bounded variation (in the sense of derivations)} in $\X$, denoted $u \in \BV_{\mathfrak{d}}(\X)$, if there is a linear and continuous map $L_u: \Der_b(\X) \to \mathbf{M}(\X)$ such that
\begin{align}\label{BVpartial}
\int_{\X}\d L_u(\mathfrak{d}) = -\int_{\X}u\div(\mathfrak{d})\d\mu
\end{align}
for all $\mathfrak{d} \in \Der_b(\X)$ and satisfying $L_u(h\mathfrak{d}) = hL_u(\mathfrak{d})$ for any bounded $h \in \Lip(\X)$, where $\mathbf{M}(\X)$ denotes the space of finite signed Radon measures on $\X$.

As observed in \cite{DiMarino}, the characterization of $\BV$ in the sense of derivations is well-posed. If we take any two maps $L_u$, $\tilde{L}_u$ as in (\ref{BVpartial}), the Lipschitz-linearity of derivations ensures that $L_u(\mathfrak{d}) = \tilde{L}_u(\mathfrak{d})$ $\mu$-a.e. for all $\mathfrak{d} \in \Der_b(\X)$. The common value will be then denoted by $Du(\mathfrak{d})$.

From \cite{DiMarino} we know that for $u \in \BV_{\mathfrak{d}}(\X)$ there exists a non-negative, finite Radon measure $\nu \in \mathbf{M}(\X)$ such that for every Borel set $B \subset \X$ one has
\begin{align}\label{envelope}
\int_B\d Du(\mathfrak{d}) \leq \int_B|\mathfrak{d}|^{*}\d\nu
\end{align}
for all $\mathfrak{d} \in \Der_b(\X)$, where $|\mathfrak{d}|^{*}$ denotes the upper-semicontinuous envelope of $|\mathfrak{d}|$. The least measure $\nu$ satisfying (\ref{envelope}) will be denoted by $\|Du\|_{\mathfrak{d}}$, the total variation of $u$ (in the sense of derivations). Moreover, we have
\begin{align*}
\|Du\|_{\mathfrak{d}}(\X) = \sup\{|Du(\mathfrak{d})(\X)|: \mathfrak{d} \in \Der_b(\X), |\mathfrak{d}| \leq 1\}.
\end{align*}
Finally, by \cite[Theorem 7.3.4]{DiMarino}, the classical representation formula for $\|Du\|_{\mathfrak{d}}$ holds, in the sense that if $\Om \subset \X$ is any open set, then
\begin{align}\label{TVRep}
\|Du\|_{\mathfrak{d}}(\Om) = \sup\left\{\int_{\Om}u\div(\mathfrak{d})\d\mu: \mathfrak{d} \in \Der_b(\X), \supp(\mathfrak{d}) \Subset \Om, |\mathfrak{d}| \leq 1\right\}.
\end{align}
From \cite[Theorem 7.3.7]{DiMarino} we obtain that if $(\X, d, \mu)$ is a complete and separable metric measure space endowed with a locally finite measure $\mu$ (as in the case of this paper), then
\begin{align*}
\BV(\X) = \BV_{\mathfrak{d}}(\X)
\end{align*}
and in particular, the respective notions of the total variation coincide. Therefore, from now on, we are only going to write $\BV(\X)$ and $\|Du\|$ without making any further distinction.

From \cite{hk} and \cite{MirandaGoodSpaces} we take the following useful properties of the total variation:
\begin{lemma}\label{totvar-sublin}{\normalfont{\cite[Remark 3.2]{MirandaGoodSpaces}}}
Let $u, v \in L^1_{\loc}(\X)$. Then, for any open set $\Om \subset \X$ and $\alpha \in \R$ there holds:
\begin{itemize}
\item[i)]$\|D(\alpha u)\|(\Om) = |\alpha|\cdot \|Du\|(\Om)$.
\item[ii)]$\|D(u + v)\|(\Om) \leq \|Du\|(\Om) + \|Dv\|(\Om)$.
\end{itemize}
Combining i) and ii), we find that the mapping $u \mapsto \|Du\|(\Om)$ is convex.
\end{lemma}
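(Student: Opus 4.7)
The plan is to argue directly from the relaxation definition
\[
\|Du\|(\Om) = \inf\left\{\liminf_{i\to\infty}\int_{\Om} g_{u_i}\,\mathrm{d}\mu : u_i \in \Lip_{\loc}(\Om),\; u_i \to u \text{ in } L^1_{\loc}(\Om)\right\},
\]
exploiting the fact that both the class of admissible approximations and the minimal upper gradients of the approximants behave well under the operations in question.

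For \emph{i)}, I would treat $\alpha = 0$ as the trivial case (the constant function zero has vanishing total variation) and then, for $\alpha \neq 0$, use the following correspondence: if $(u_i) \subset \Lip_{\loc}(\Om)$ approximates $u$ in $L^1_{\loc}(\Om)$, then $(\alpha u_i) \subset \Lip_{\loc}(\Om)$ approximates $\alpha u$ in $L^1_{\loc}(\Om)$, and for a Lipschitz function on a metric measure space the minimal $1$-weak upper gradient satisfies $g_{\alpha u_i} = |\alpha|\, g_{u_i}$ $\mu$-almost everywhere. Passing to the $\liminf$ and then to the infimum over admissible sequences yields $\|D(\alpha u)\|(\Om) \leq |\alpha|\,\|Du\|(\Om)$, and applying the same bound to $\alpha^{-1}(\alpha u)$ gives the reverse inequality.

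For \emph{ii)}, given approximating sequences $u_i \to u$ and $v_i \to v$ in $L^1_{\loc}(\Om)$ with $u_i, v_i \in \Lip_{\loc}(\Om)$, the sum $u_i + v_i$ lies in $\Lip_{\loc}(\Om)$ and converges to $u+v$ in $L^1_{\loc}(\Om)$. The key input is the well-known fact that the sum of (weak) upper gradients of $u_i$ and $v_i$ is a (weak) upper gradient of $u_i + v_i$; in particular $g_{u_i + v_i} \leq g_{u_i} + g_{v_i}$ $\mu$-a.e., so that
\[
\int_{\Om} g_{u_i + v_i}\,\mathrm{d}\mu \leq \int_{\Om} g_{u_i}\,\mathrm{d}\mu + \int_{\Om} g_{v_i}\,\mathrm{d}\mu.
\]
Taking the $\liminf$ (using that $\liminf$ is subadditive) and then the infimum first over $(v_i)$ and then over $(u_i)$ produces the bound $\|D(u+v)\|(\Om) \leq \|Du\|(\Om) + \|Dv\|(\Om)$. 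Convexity then follows at once: for $\lambda \in [0,1]$, a single application of \emph{ii)} to $\lambda u$ and $(1-\lambda) v$ combined with \emph{i)} yields $\|D(\lambda u + (1-\lambda) v)\|(\Om) \leq \lambda\|Du\|(\Om) + (1-\lambda)\|Dv\|(\Om)$.

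The only delicate point I would flag is ensuring that the admissible approximating sequences can be combined: one must check that taking a diagonal of the two infima in \emph{ii)} indeed produces a single sequence whose $\liminf$ of energies is controlled by the sum of the individual infima. This is a standard extraction, but it is the one step where carelessness could cost a constant; everything else reduces to pointwise inequalities for minimal upper gradients and the linearity of the $L^1_{\loc}$ convergence.
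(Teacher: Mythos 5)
Your argument is correct in substance, and it is worth noting that the paper itself offers no proof of this lemma at all: it is quoted verbatim from \cite[Remark 3.2]{MirandaGoodSpaces}, so there is nothing in the paper to compare against beyond the citation. What you write is the standard proof from the relaxation definition, and the two pointwise facts you rely on --- $g_{\alpha w} = |\alpha|\, g_w$ and $g_{w_1+w_2} \leq g_{w_1} + g_{w_2}$ $\mu$-a.e.\ for minimal $1$-weak upper gradients of (locally) Lipschitz functions --- are indeed standard. The one place where your write-up is literally incorrect as stated is the parenthetical ``using that $\liminf$ is subadditive'': the inequality $\liminf_i(a_i+b_i) \leq \liminf_i a_i + \liminf_i b_i$ is false in general (take $a_i$ and $b_i$ alternating $0,1$ out of phase), and in fact the true general inequality goes the other way. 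You flag this yourself as the delicate point, and the standard repair is exactly the one you hint at: given $\varepsilon>0$, pick near-optimal admissible sequences $(u_i)$, $(v_i)$ and pass to subsequences along which $\int_{\Om} g_{u_i}\,\mathrm{d}\mu$ and $\int_{\Om} g_{v_i}\,\mathrm{d}\mu$ actually \emph{converge} to values below $\|Du\|(\Om)+\varepsilon$ and $\|Dv\|(\Om)+\varepsilon$ respectively; since each energy sequence is then a genuine limit, one gets
\begin{align*}
\liminf_{i\to\infty}\int_{\Om} g_{u_i+v_i}\,\mathrm{d}\mu \leq \lim_{i\to\infty}\int_{\Om} g_{u_i}\,\mathrm{d}\mu + \lim_{i\to\infty}\int_{\Om} g_{v_i}\,\mathrm{d}\mu \leq \|Du\|(\Om)+\|Dv\|(\Om)+2\varepsilon,
\end{align*}
and letting $\varepsilon\searrow 0$ finishes ii); no joint diagonal extraction is actually needed once both energies converge. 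Part i) and the deduction of convexity are fine as you state them.
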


\smallskip

\begin{proposition}\label{totvar-lsc}{\normalfont{\cite[Proposition 3.6]{MirandaGoodSpaces}}}
Let $\Om \subset \X$ be an open set and let $(u_n)_{n \in \NN}$ be a sequence in $\BV_{\loc}(\Om)$ such that $u_n \longrightarrow u$ in $L^1_{\loc}(\Om)$. Then, there holds
\begin{align*}
\|Du\|(A) \leq \liminf_{n \to \infty}\|Du_n\|(A)
\end{align*}
for any open set $A \subset \Om$. In particular, if $\sup\limits_{n \in \NN}\|Du_n\|(A) < \infty$ for any open set $A \Subset \Om$, the limit function $u$ is in $\BV_{\loc}(\Om)$.
\end{proposition}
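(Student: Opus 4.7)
The plan is to exploit the dual representation \eqref{TVRep} of the total variation via derivations, which reduces the semicontinuity to a simple passage to the limit in a linear functional. This sidesteps any need for a diagonal Lipschitz-approximation argument and fits naturally into the language of this paper.

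First I would fix an open set $A \subset \Om$ and an arbitrary derivation $\mathfrak{d} \in \Der_b(\X)$ with $\supp(\mathfrak{d}) \Subset A$ and $|\mathfrak{d}| \leq 1$. Since $\mathfrak{d} \in \Der_b(\X)$, both $|\mathfrak{d}|$ and $\div(\mathfrak{d})$ lie in $L^\infty(\X)$; moreover the compact set $K := \supp(\mathfrak{d})$ is contained in $A \subset \Om$, so that the $L^1_{\loc}(\Om)$ convergence $u_n \to u$ restricts to $L^1(K)$ convergence. Combining this with the uniform bound $\|\div(\mathfrak{d})\|_{L^\infty} < \infty$ gives
\begin{align*}
\int_{\Om} u_n \div(\mathfrak{d}) \d\mu = \int_K u_n \div(\mathfrak{d}) \d\mu \longrightarrow \int_K u \div(\mathfrak{d}) \d\mu = \int_{\Om} u \div(\mathfrak{d}) \d\mu
\end{align*}
as $n \to \infty$.

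Next, for each fixed $n$, since $\mathfrak{d}$ is an admissible test derivation in the definition of $\|Du_n\|(A)$ via \eqref{TVRep}, we obtain $\int_{\Om} u_n \div(\mathfrak{d}) \d\mu \leq \|Du_n\|(A)$. Passing to the $\liminf$ on the right and using the convergence established above yields
\begin{align*}
\int_{\Om} u \div(\mathfrak{d}) \d\mu \leq \liminf_{n \to \infty} \|Du_n\|(A).
\end{align*}
Taking the supremum over all admissible derivations $\mathfrak{d}$ and invoking \eqref{TVRep} once more for $u$ itself delivers $\|Du\|(A) \leq \liminf_{n \to \infty} \|Du_n\|(A)$, which is the desired lower semicontinuity. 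The final statement about $u \in \BV_{\loc}(\Om)$ then follows by applying the inequality just proved to each $A \Subset \Om$ and invoking the assumed uniform bound on $\|Du_n\|(A)$.

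I do not expect any substantive obstacle here; the only subtlety worth double-checking is the measure-theoretic compatibility between the quantity $\int_A u \div(\mathfrak{d}) \d\mu$ used in \eqref{TVRep} (where $\supp \mathfrak{d} \Subset A$) and the localization of $L^1_{\loc}$ convergence, which is taken care of precisely by the compact containment of $\supp(\mathfrak{d})$ in $A$. Should one prefer to avoid the derivation formalism, an equivalent route is a diagonal argument: for each $n$ choose a sequence $(v_i^n)_i \subset \Lip_{\loc}(A)$ with $v_i^n \to u_n$ in $L^1_{\loc}(A)$ realizing the infimum defining $\|Du_n\|(A)$ up to $1/n$, then extract $v_{i_n}^n$ so that $v_{i_n}^n \to u$ in $L^1_{\loc}(A)$ and $\int_A g_{v_{i_n}^n} \d\mu \leq \|Du_n\|(A) + 2/n$, and finally apply the definition of $\|Du\|(A)$ to conclude.
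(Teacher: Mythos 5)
The paper does not prove Proposition \ref{totvar-lsc} at all: it is imported verbatim from \cite[Proposition 3.6]{MirandaGoodSpaces}, whose argument is precisely the relaxation/diagonal construction you sketch in your last sentences. That fallback sketch is sound and matches the cited proof (the only bookkeeping to add is to first pass to a subsequence of $(u_n)$ realizing $\liminf_{n}\|Du_n\|(A)$, and to perform the diagonal extraction with respect to the metrizable topology of $L^1_{\loc}(A)$), so if you want an argument that stays inside this paper's toolkit, that is the one to write out.

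Your primary route via \eqref{TVRep}, however, has a genuine gap at its final step. Formula \eqref{TVRep} is stated, and in \cite{DiMarino} proved, for functions already known to be of bounded variation: it presupposes $u\in\BV_{\mathfrak{d}}(\X)$ (in particular $u\in L^1(\X)$) and then represents the already-defined measure $\|Du\|_{\mathfrak{d}}$ by the supremum over derivations. In this paper the primitive definition of $\|Du\|(A)$ is the relaxation over locally Lipschitz approximations, and the duality with derivations is a theorem about $\BV$ functions, not a definition valid for an arbitrary $L^1_{\loc}$ function with the value $+\infty$ allowed. So when you ``invoke \eqref{TVRep} once more for $u$ itself'' you are using exactly what is not yet known --- indeed the ``in particular'' clause of the proposition is precisely the claim that $u\in\BV_{\loc}(\Om)$. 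To repair the argument you would need the stronger statement that for every $u\in L^1_{\loc}(A)$ the relaxation-defined $\|Du\|(A)$ is dominated by the supremum in \eqref{TVRep} (equivalently, that finiteness of that supremum forces membership in $\BV$ with the corresponding bound); this is true in the derivation framework, but it is not among the facts quoted in the paper and is essentially as deep as the semicontinuity being proved, so as written the reasoning is circular. A smaller mismatch of the same kind affects the $u_n$: they are only in $\BV_{\loc}(\Om)$, while \eqref{TVRep} and the whole $\BV_{\mathfrak{d}}$ apparatus are formulated for $L^1(\X)$ functions, so even the inequality $\int_{\Om}u_n\div(\mathfrak{d})\,\d\mu\le\|Du_n\|(A)$ requires a localization statement (cutoffs, or running the theory with a suitable open set as ambient space) that the paper never records; this part is fixable, but it is not automatic from what is quoted.
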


\begin{lemma}\label{minmaxvar}{\normalfont{\cite[Theorem 2.8]{hk}}} Let $v,w\in \BV(\Omega)$. Then, $\min\{v,w\},\max\{v,w\}\in \BV(\Omega)$ and
\[
 \|D\min\{v,w\}\|(\Omega)+\|D\max\{v,w\}\|(\Omega)\le\|Dv\|(\Omega)+\|Dw\|(\Omega).
\]
\end{lemma}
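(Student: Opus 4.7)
The plan is to argue by relaxation, mirroring the definition of the total variation. First I would choose sequences $(v_i)_{i\in\NN}, (w_i)_{i\in\NN} \subset \Lip_{\loc}(\Om)$ with $v_i \to v$ and $w_i \to w$ in $L^1_{\loc}(\Om)$ such that $\int_{\Om}g_{v_i}\d\mu \to \|Dv\|(\Om)$ and $\int_{\Om}g_{w_i}\d\mu \to \|Dw\|(\Om)$; existence of such sequences is immediate from the definition of the total variation (with a standard diagonal argument if the infimum is not realized along a single pair of sequences). From the pointwise inequalities $|\min\{a,b\}-\min\{c,d\}|\leq |a-c|+|b-d|$ and the analogous one for $\max$, it follows that $\min\{v_i,w_i\},\max\{v_i,w_i\}\in \Lip_{\loc}(\Om)$, and that $\min\{v_i,w_i\}\to \min\{v,w\}$ and $\max\{v_i,w_i\}\to \max\{v,w\}$ in $L^1_{\loc}(\Om)$. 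Both limits are in $L^1(\Om)$ since they are pointwise bounded by $|v|+|w|$.

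The central technical step is the pointwise bound
\[
g_{\min\{v_i,w_i\}}(x) + g_{\max\{v_i,w_i\}}(x) \leq g_{v_i}(x) + g_{w_i}(x) \quad \text{for $\mu$-a.e.\ } x \in \Om.
\]
On the open set $\{v_i < w_i\}$ one has $\min\{v_i,w_i\}=v_i$ and $\max\{v_i,w_i\}=w_i$ in a neighbourhood of each point, so by locality of the minimal upper gradient the inequality above reduces to an equality there; the situation is symmetric on $\{v_i > w_i\}$. The subtlety lies on the coincidence set $\{v_i = w_i\}$, which need not be $\mu$-negligible: here one exploits the identification of the minimal upper gradient of a locally Lipschitz function with its asymptotic Lipschitz constant $\Lip_a$, and argues from local behaviour together with the identity $\min\{v_i,w_i\}+\max\{v_i,w_i\}=v_i+w_i$ to conclude that the sum $g_{\min}+g_{\max}$ remains controlled by $g_{v_i}+g_{w_i}$. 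I expect this coincidence-set analysis to be the main obstacle.

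Granting the pointwise bound, integration together with the elementary inequality $\liminf a_i + \liminf b_i \leq \liminf(a_i+b_i)$ gives
\[
\liminf_{i\to \infty}\!\left[\int_{\Om}g_{\min\{v_i,w_i\}}\d\mu + \int_{\Om}g_{\max\{v_i,w_i\}}\d\mu\right] \leq \|Dv\|(\Om)+\|Dw\|(\Om).
\]
Finally, applying the definition of the total variation to each of the $L^1_{\loc}$-convergent approximating sequences $\min\{v_i,w_i\}\to \min\{v,w\}$ and $\max\{v_i,w_i\}\to \max\{v,w\}$ (equivalently, combining Proposition \ref{totvar-lsc} with the trivial bound $\|Df\|(\Om)\leq \int_{\Om}g_f\d\mu$ valid for locally Lipschitz $f$) delivers simultaneously the finiteness of $\|D\min\{v,w\}\|(\Om)+\|D\max\{v,w\}\|(\Om)$, whence $\min\{v,w\},\max\{v,w\}\in\BV(\Om)$, and the claimed additive estimate.
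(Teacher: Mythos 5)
The paper offers no proof of this lemma at all (it is quoted verbatim from \cite[Theorem 2.8]{hk}), so your attempt has to stand on its own. Your overall relaxation strategy is the right one and is essentially how the result is proved in the cited source: approximate $v,w$ by locally Lipschitz functions realizing the total variations, observe that $\min$ and $\max$ of the approximations converge in $L^1_{\loc}(\Om)$ to $\min\{v,w\}$ and $\max\{v,w\}$, control the sum of the upper-gradient energies of the truncations, and conclude via superadditivity of $\liminf$ together with the definition (or Proposition \ref{totvar-lsc}) of the total variation. That outer scaffolding is fine.

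The genuine gap is exactly the step you flag and then only gesture at: the a.e.\ inequality $g_{\min\{v_i,w_i\}}+g_{\max\{v_i,w_i\}}\le g_{v_i}+g_{w_i}$ is not proved, and the route you sketch for the coincidence set would not work. First, under doubling and Poincar\'e the minimal weak upper gradient of a locally Lipschitz function is identified (by Cheeger's theorem) with the pointwise Lipschitz constant $\limsup_{y\to x}|f(y)-f(x)|/d(x,y)$, not with the asymptotic Lipschitz constant $\Lip_a f$, and the two can differ on a set of positive measure. Second, the inequality you need is actually false pointwise for $\Lip_a$ on coincidence sets: already on $\R$, take $v\equiv 0$ and $w$ Lipschitz, vanishing exactly on a fat Cantor set $E$ and changing sign with slopes $\pm1$ inside each complementary interval; then at all but countably many points of $E$ (a set of positive measure) one has $\Lip_a(\min\{v,w\})=\Lip_a(\max\{v,w\})=1$ while $\Lip_a v+\Lip_a w=1$. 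So no purely local analysis at points of $\{v_i=w_i\}$ can yield the bound; it is an intrinsically measure-theoretic statement about minimal weak upper gradients. The correct ingredient is the standard locality/lattice property of minimal $1$-weak upper gradients of Newtonian functions, valid for $p=1$ and without any PI assumption (see e.g.\ \cite{Bjoern}): if $h_1=h_2$ a.e.\ on a measurable set then $g_{h_1}=g_{h_2}$ a.e.\ there. Applying this with $h_1=\min\{v_i,w_i\}$ or $\max\{v_i,w_i\}$ on the sets $\{v_i\le w_i\}$, $\{v_i\ge w_i\}$, and using $g_{v_i}=g_{w_i}$ a.e.\ on $\{v_i=w_i\}$, gives $g_{\min\{v_i,w_i\}}+g_{\max\{v_i,w_i\}}=g_{v_i}+g_{w_i}$ $\mu$-a.e. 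Once you quote (or prove) that property, your argument closes and coincides in substance with the proof in \cite{hk}.
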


\subsubsection{Weak parabolic function spaces}\label{sub-weak-par}

For $T > 0$ and an open subset $\Om \subset \X$ we write $\Om_T$  for the space-time cylinder $\Om \times (0, T)$. For the concept of variational solutions we are going to make use of the space 
\begin{align*}
L^1_w(0, T; \BV(\Om)),
\end{align*}
where the suffix \textit{w} stands for 'weak'. This space consists of those $v \in L^1(\Om_T)$, such that there holds:
\begin{itemize}
\item $v(\cdot, t) \in \BV(\Om)$ for a.e. $t \in (0, T)$,
\item The mapping $t \mapsto v(\cdot, t)$ is \textit{weakly measurable}, i.e. the mapping
\begin{align}\label{pairing}
(0, T) \ni t \longmapsto \int_{\Om}v(t)\div(\mathfrak{d})\d\mu 
\end{align}
is measurable for all $\mathfrak{d} \in \Der_b(\X)$ with $\supp(\mathfrak{d}) \Subset \Om$,
\item $\displaystyle\int_0^T\|Dv(t)\|(\Om)\dt < \infty$.
\end{itemize}

\begin{remark}
In the case of the gradient flow, i.e. a functional with $p$-growth for $p > 1$, the parabolic function spaces considered are usually $L^p(0, T; \N^{1,p}(\Om))$, which consist of mappings $v: (0, T) \to \N^{1,p}(\Om)$ that are strongly measurable in the sense of Bochner, see \cite[Chapter 3]{SobolevSpacesBook}. In the case at hand, that is $p = 1$, one would consider the Bochner space $L^1(0, T; \BV(\Om))$. But the strong measurability in the sense of Bochner is too restrictive, since many simple examples - like the space-time cone $u(t) = \mathds{1}_{B_t(x_0)}$ -  are not strongly measurable in the sense of Bochner, since their image is not separable in $\BV(\Om)$. Therefore, the strong measurability condition is replaced with a weaker one. \\
Note that the weak measurability of a function in $L^1_w(0, T; \BV(\Om))$ is not to be confused with the weak measurability of a Banach space-valued function in the sense of Pettis' theorem, see again  \cite[Chapter 3]{SobolevSpacesBook}.
\end{remark}

\begin{remark}
In the Euclidean case, i.e. $\X = \R^n$ for some $n \in \NN$, the notion of weak measurability as in (\ref{pairing}) is usually understood in the sense that the pairing
\begin{align*}
(0, T) \ni t \mapsto \langle Dv(t), \varphi \rangle = -\int_{\Om}v(t)\div(\varphi)\dx
\end{align*}
is measurable for any $\varphi \in C^1_0(\Om; \R^n)$.

Indeed, the approach by derivations as introduced before yields this classical notion of weak measurability. To understand this, define for any $\varphi \in C^1_0(\Om; \R^n)$ the mapping
\begin{align*}
\mathfrak{d}_{\varphi}: \Lip_{\text{bs}}(\Om) \ni f \mapsto \langle \varphi, Df\rangle.
\end{align*}
By Rademacher's theorem, the gradient $Df$ is defined almost everywhere on $\Om$ for a Lipschitz function $f$. It is easy to check that $\mathfrak{d}_{\varphi}$ fulfills the Leibniz rule and the property (\ref{DerRep}) with $g(x) = |\varphi(x)|$ almost everywhere. By integration by parts, we find that for the divergence operator of $\mathfrak{d}_{\varphi}$ there holds 
\begin{align*}
\div(\mathfrak{d}_{\varphi}): f \mapsto -\int_{\Om}\langle \varphi, Df\rangle\dx = \int_{\Om}\div(\varphi)f\dx.
\end{align*} 
Hence, the divergence of $\mathfrak{d}_{\varphi}$ is represented by $\div(\varphi)$. Thus, the weak measurability in the sense of (\ref{pairing}) yields the measurability of the mapping
\begin{align*}
(0, T) \ni t \mapsto \int_{\Om}v(t)\div(\varphi)\dx.
\end{align*}
\end{remark}

In view of (\ref{TVRep}), the mapping $[0, T] \ni t \mapsto \|Dv(t)\|(\Om)$ is measurable for $v \in L^1_w(0, T; \BV(\Om))$. 

Furthermore, the limit of a sequence of functions in $L^1_w(0, T; \BV({\Om}))$ with uniformly bounded total variation is again a $L^1_w(0, T; \BV(\Om))$-function:

\begin{lemma}\label{uinweak}
Suppose that the sequence $u_j \in L^1_w(0, T; \BV(\Om))$, $j \in \NN$, satisfies
\begin{align*}
\sup_{j \in \NN}\int_0^T\|Du_j(t)\|(\Om)\dt < \infty
\end{align*}
and $u_j \to u$ in $L^1(\Om_T)$ as $j \to \infty$. Then, $u \in L^1_w(0, T; \BV(\Om))$.
\end{lemma}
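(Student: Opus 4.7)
The goal is to verify the three defining conditions of $L^1_w(0,T;\BV(\Om))$ for the limit function $u$: that $u(\cdot,t) \in \BV(\Om)$ for a.e.\ $t$, that $\int_0^T \|Du(t)\|(\Om)\dt$ is finite, and that $t \mapsto \int_\Om u(t)\div(\mathfrak{d})\d\mu$ is measurable for every $\mathfrak{d} \in \Der_b(\Om)$ with $\supp(\mathfrak{d}) \Subset \Om$. The strategy is to combine the $L^1(\Om_T)$-convergence with Fubini, the lower semicontinuity of the total variation from Proposition~\ref{totvar-lsc}, and Fatou's lemma, and to exploit that the divergence of a bounded derivation is an $L^\infty$ function.

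First, since $u_j \to u$ in $L^1(\Om_T)$, Fubini's theorem implies that $u(\cdot,t)\in L^1(\Om)$ for a.e.\ $t \in (0,T)$ and that, passing to a subsequence (not relabelled), $u_j(\cdot,t) \to u(\cdot,t)$ in $L^1(\Om)$ for a.e.\ $t$. For every such $t$, Proposition~\ref{totvar-lsc} yields
\begin{align*}
\|Du(t)\|(\Om) \leq \liminf_{j \to \infty} \|Du_j(t)\|(\Om).
\end{align*}
Applying Fatou's lemma in the time variable together with the uniform bound in the hypothesis gives
\begin{align*}
\int_0^T \|Du(t)\|(\Om)\dt
\leq \liminf_{j \to \infty} \int_0^T \|Du_j(t)\|(\Om)\dt
< \infty,
\end{align*}
which simultaneously establishes the first two conditions: $\|Du(t)\|(\Om) < \infty$ (hence $u(\cdot,t) \in \BV(\Om)$) for a.e.\ $t$, and the required integrability in time.

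It remains to verify weak measurability. Fix $\mathfrak{d} \in \Der_b(\Om)$ with $\supp(\mathfrak{d}) \Subset \Om$; since $\Der_b(\Om) = \Der^{\infty,\infty}(\Om)$, we have $\div(\mathfrak{d}) \in L^\infty(\Om)$. By assumption each map $t \mapsto \int_\Om u_j(t)\div(\mathfrak{d})\d\mu$ is measurable on $(0,T)$. Using the $L^1(\Om_T)$ convergence,
\begin{align*}
\int_0^T \left| \int_\Om \bigl(u_j(t) - u(t)\bigr)\div(\mathfrak{d})\d\mu \right| \dt
\leq \|\div(\mathfrak{d})\|_{L^\infty(\Om)} \, \|u_j - u\|_{L^1(\Om_T)} \longrightarrow 0
\end{align*}
as $j \to \infty$. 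Hence these measurable functions of $t$ converge to $t \mapsto \int_\Om u(t)\div(\mathfrak{d})\d\mu$ in $L^1(0,T)$; extracting a further subsequence that converges a.e.\ shows that the limit is measurable. Since $\mathfrak{d}$ was arbitrary, $u$ satisfies the weak measurability condition and thus belongs to $L^1_w(0,T;\BV(\Om))$.

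I do not expect a serious obstacle here: the only delicate point is that each of the three defining properties must be extracted from a different mode of convergence, and one must choose subsequences at the right moment (once to get pointwise $L^1(\Om)$-convergence for Proposition~\ref{totvar-lsc}, and once per derivation $\mathfrak{d}$ in the last step). Crucially, the subsequence in the last step may depend on $\mathfrak{d}$, which is harmless because measurability is a pointwise-in-$t$ property established separately for each $\mathfrak{d}$.
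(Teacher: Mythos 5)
Your proof is correct and follows essentially the same route as the paper: Fubini plus a subsequence giving $u_j(t)\to u(t)$ in $L^1(\Om)$ for a.e.\ $t$, then Proposition~\ref{totvar-lsc} and Fatou's lemma for the $\BV$ and integrability conditions, and passage to the limit in the pairings with $\div(\mathfrak{d})$ for weak measurability. The only (harmless) difference is your last step: the detour through $L^1(0,T)$-convergence with a $\mathfrak{d}$-dependent subsequence is unnecessary and, as phrased, mildly circular (asserting convergence in $L^1(0,T)$ to $t\mapsto\int_\Om u(t)\div(\mathfrak{d})\d\mu$ presupposes that this limit is measurable), whereas along the subsequence you already fixed the bound $\bigl|\int_\Om (u_j(t)-u(t))\div(\mathfrak{d})\d\mu\bigr|\le\|\div(\mathfrak{d})\|_{L^\infty}\|u_j(t)-u(t)\|_{L^1(\Om)}$ gives pointwise a.e.\ convergence of the measurable maps $t\mapsto\int_\Om u_j(t)\div(\mathfrak{d})\d\mu$, which yields measurability of the limit directly --- exactly as in the paper.
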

\begin{proof}
By Fubini's theorem we have, for a subsequence of $(u_j)_{j\in\mathbb{N}}$ - which we still label as $u_j$ - and for a.e. $t\in (0,T)$,
\begin{equation*}
u_{j}(t) \longrightarrow u(t)\ \textrm{in}\ L^{1}(\Omega).
\end{equation*}
Therefore, for any $\mathfrak{d}\in \Der_{b}(\Omega)$ there holds
\begin{align*}
\int_{\Omega}u(t)\div(\mathfrak{d})\d\mu &=\lim_{j\rightarrow\infty}\int_{\Omega}u_{j}(t)\div(\mathfrak{d})\d\mu.
\end{align*}
Since $u_{j}\in L^{1}_{w}(0,T;BV(\Omega))$, we then have that for any $\mathfrak{d}\in \Der_{b}(\Omega)$ the mapping
\begin{equation*}
(0,T)\ni t\mapsto\int_{\Omega}u(t)\div(\mathfrak{d})\d\mu 
\end{equation*}
is measurable.

By the lower semicontinuity of the total variation with respect to $L^{1}$-con\-ver\-gence on the time slices and Fatou's lemma, we conclude:
\begin{align*}
\int_{0}^{T}\Vert Du(t)\Vert(\Omega)\dt&\leq \int_{0}^{T}\liminf_{j\rightarrow\infty}\Vert Du_{j}(t)\Vert(\Omega)\dt\\
&\leq\liminf_{j\rightarrow\infty} \int_{0}^{T}\Vert Du_{j}(t)\Vert(\Omega)\dt<\infty.
\end{align*}
This implies $u(t)\in \BV(\Omega)$ for a.e. $t\in(0,T)$ and of course that $\|Du(t)\|\in L^1(0,T)$. In other words,
$u\in L^{1}_{w}(0,T; \BV(\Omega)$.

\end{proof}

\subsection{Variational solutions}

In the Euclidean case, i.e. $\X = \R^n$, one might consider the Cauchy-Dirichlet problem
\begin{align}\label{CauchyDirichlet}
\left\{\begin{array}{rl}
\partial_tu - \div\left(\dfrac{Du}{|Du|}\right) = 0 & \text{in } \Om_T, \\ 
u = u_0 & \text{on } \partial_{\text{par}}\Om_T,
\end{array}\right. 
\end{align}
where $\partial_{\text{par}}\Om_T := (\overline{\Om} \times \{0\}) \cup (\partial\Om \times (0, T))$ denotes the \textit{parabolic boundary} of $\Om_T$ and $u_0$ is some given boundary data. 

When trying to define a concept of Cauchy-Dirichlet problems like (\ref{CauchyDirichlet}) for $u \in L^1_w(0, T; \BV(\Om))$ on metric measure spaces, one has to overcome several difficulties. Indeed, similarly to what was already observed in \cite[Section 1.2]{boegeleintv}, we point out that also in our case boundary values of $\BV$-functions are delicate to manage, since the trace operator is not continuous with respect to the weak$^*$-convergence in $\BV(\Om)$ - see for instance \cite[Def. 3.11]{AmbFusPal} -  and the pairing in (\ref{pairing}). A suitable strategy to treat this issue is to consider a slightly larger domain $\Om^*$ that compactly countains the bounded open set $\Om$ and to assume that the datum $u_0$ is defined on $\Om^*$. The boundary condition $u = u_0$ on the lateral boundary $\partial\Om \times (0, T)$ could then be interpreted by requiring that $u(\cdot, t) = u_0$ a.e. on $\Om^* \setminus \Om$ for all $t \in (0, T)$. Thus said, from now on boundary values shall be understood in the following sense:

\begin{center}
given $u_0 \in \BV(\Om^*)$, a function $u$ belongs to $\BV_{u_0}(\Om)$\\if and only if $u \in \BV(\Om^*)$ and $u = u_0$ a.e. on $\Om^* \setminus \Om$.
\end{center}

The condition on the lateral boundary has to be read  in the sense that there holds $u(\cdot, t) \in \BV_{u_0}(\Om)$ for a.e. $t \in (0, T)$.

On the other hand, we do not have the possibility to explain derivatives such as in (\ref{CauchyDirichlet}). Therefore we cannot consider Cauchy-Dirichlet problems like this. However, by an idea of Lichnewsky and Temam (see \cite{LichnewskyTemam}), one can define the concept of \textit{variational solutions}. Since this concept for solutions to a Cauchy-Dirichlet problem is described purely on a variational level, it can be extended to the concept of metric measure spaces. 

To be precise, we assume $\Om$ to be open and bounded, $\Om^*$ open and bounded with $\Om \Subset \Om^*$ and 
\begin{align}\label{datum}
u_0 \in L^2(\Om^*) \cap \BV(\Om^*).
\end{align}
Where it makes sense, we are going to abbreviate $v(t) := v(\cdot, t)$.

\medskip

\begin{definition}\label{varsoldef}
Assume that the Cauchy-Dirichlet datum $u_0$ fulfills (\ref{datum}). A map $u: \Om_T^* \to \R$, $T \in (0, \infty)$ in the class
\begin{align*}
L^1_w\left(0, T; \BV_{u_0}(\Om)\right) \cap C^0\left([0, T]; L^2(\Om^*)\right)
\end{align*}
will be referred to as a \textit{variational solution} on $\Om_T$ to the Cauchy-Dirichlet problem for the total variation flow if and only if the variational inequality 
\begin{align}\label{varineq}
\begin{aligned}
\int_0^T\|Du(t)\|(\Om^*)\dt & \leq \int_0^T\left[\int_{\Omstar}\partial_tv(v-u)\d\mu + \|Dv(t)\|(\Omstar)\right]\dt \\
& \mskip+25mu -\frac{1}{2}\|(v-u)(T)\|_{L^2(\Omstar)}^2 + \frac{1}{2}\|v(0) - u_0\|_{L^2(\Omstar)}^2
\end{aligned}
\end{align}
holds true for any $v \in L^1_w\left(0, T; \BV_{u_0}(\Om)\right)$ with $\partial_tv \in L^2(\Omstar_T)$ and $v(0) \in L^2(\Omstar)$. A map $u: \Omstar_{\infty} \to \R$ is termed a \textit{global variational solution} if 
\begin{align*}
u \in L^1_w\left(0, T; \BV_{u_0}(\Om)\right) \cap C^0\left([0, T]; L^2(\Om^*)\right) \text{ for any } T > 0
\end{align*}
and $u$ is a variational solution on $\Om_T$ for any $T \in (0, \infty)$.
\end{definition}

Note that the time independent extension $v(\cdot, t) := u_0$ is an admissible comparison map in (\ref{varineq}). Therefore, we have that 
\begin{align*}
\int_0^T\|Du(t)\|(\Omstar)\dt < \infty
\end{align*}
for any variational solution $u$.

\subsection{Main results}

Our main results concern the existence, uniqueness and regularity of variational solutions as follows:

\begin{theorem}\label{maintheo1}
Suppose that the Cauchy-Dirichlet datum $u_0$ fulfills the requirements of (\ref{datum}). Then, there exists a unique global variation solution in the sense of Definition \ref{varsoldef}.
\end{theorem}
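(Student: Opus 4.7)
The plan is to follow the scheme outlined in the introduction and argue entirely at the level of minimizers, adapting the strategy of B\"ogelein--Duzaar--Marcellini \cite{boegeleintv} to the metric setting. Fix $T>0$. For each $\varepsilon\in(0,1]$ I would first consider the relaxed convex functional
\[
\mathcal{F}_{\varepsilon}[v] = \int_0^T e^{-t/\varepsilon}\left[\tfrac12\int_{\Omstar}|\partial_tv|^2\d\mu + \tfrac{1}{\varepsilon}\|Dv(t)\|(\Omstar)\right]\dt
\]
over the class of admissible maps $v$ with $v(\cdot,t)\in \BV_{u_0}(\Om)$ for a.e.\ $t$, $v(\cdot,0)=u_0$ in the $L^2$-sense, and $\partial_tv\in L^2(\Omstar_T)$. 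Existence of a minimizer $u_{\varepsilon}$ follows from the direct method: convexity of $v\mapsto \|Dv\|(\Omstar)$ (Lemma \ref{totvar-sublin}) combined with the $L^1$-lower semicontinuity of the total variation (Proposition \ref{totvar-lsc}) yields lower semicontinuity of $\mathcal{F}_{\varepsilon}$ along minimizing sequences, while the uniform $L^2$-bound on $\partial_tv$ together with Simon's compactness lemma (Lemma \ref{compactness} in the appendix) produces an $L^1(\Omstar_T)$-convergent subsequence whose limit lies in the admissible class by Lemma \ref{uinweak}.

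Next I would extract $\varepsilon$-uniform estimates. Testing $u_{\varepsilon}$ against the time-independent competitor $v\equiv u_0$ and integrating by parts in $t$ against the exponential weight controls $\sup_{t\in[0,T]}\|u_{\varepsilon}(t)\|_{L^2(\Omstar)}$ and $\int_0^T\|Du_{\varepsilon}(t)\|(\Omstar)\dt$ uniformly. A further application of Simon's lemma then delivers $u_{\varepsilon_j}\to u$ in $L^1(\Omstar_T)$ (and pointwise a.e.) for some $\varepsilon_j\downarrow 0$. Lemma \ref{uinweak} puts $u\in L^1_w(0,T;\BV_{u_0}(\Om))$, while equicontinuity in time derived from the energy estimate yields $u\in C^0([0,T];L^2(\Omstar))$ with $u(0)=u_0$.

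The core step is the derivation of the variational inequality (\ref{varineq}). For an admissible comparison map $v$, I would insert the convex combination $w_s=(1-s)u_{\varepsilon}+sv$, $s\in(0,1)$, into $\mathcal{F}_{\varepsilon}$, use convexity of the total variation (Lemma \ref{totvar-sublin}) and minimality of $u_{\varepsilon}$ to extract a first-variation inequality, then divide by $s$ and let $s\downarrow 0$. Integration by parts in $t$ against $e^{-t/\varepsilon}$ in the term $\int\partial_tw_s(w_s-u_{\varepsilon})\d\mu$ generates both the terminal and initial $L^2$-norms in (\ref{varineq}). To pass $\varepsilon\to 0$, I would use: (i) the lower semicontinuity of $v\mapsto \int_0^T\|Dv(t)\|(\Omstar)\dt$ from Proposition \ref{totvar-lsc} and Fatou's lemma for the left-hand side; (ii) an integration by parts that replaces the singular factor $\varepsilon^{-1}e^{-t/\varepsilon}$ by $1-e^{-T/\varepsilon}\to 1$ on the right; (iii) the $C^0([0,T];L^2)$-convergence to handle the boundary terms. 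Uniqueness is then obtained by inserting $u_2$ as a comparison map in the inequality for $u_1$ and vice versa; after a time-regularisation of the comparison map to give meaning to $\partial_tu_i$, summing the two inequalities and cancelling yields $\|u_1-u_2\|_{L^2(\Omstar)}^2(T)\le 0$. Finally, a global variational solution is produced by a standard diagonal/Cantor procedure on $T_k\uparrow\infty$, with uniqueness guaranteeing consistency of the partial solutions on nested cylinders.

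The main obstacle I expect is step (iii) in the passage to the limit, because the only convergence available on $L^1_w(0,T;\BV_{u_0}(\Om))$ is $L^1(\Omstar_T)$-convergence (the space is not reflexive and carries no obvious dual structure beyond the derivation pairing (\ref{pairing})). In particular, the term $\int_0^T\!\int_{\Omstar}\partial_tv\,(v-u_{\varepsilon})\d\mu\dt$ requires either a mollification of $u_{\varepsilon}$ in time or of the comparison map $v$ so that the product converges, while the left-hand side still benefits from lower semicontinuity. A related delicate point is proving that the initial trace of the limit is attained in $L^2(\Omstar)$, i.e.\ $u(t)\to u_0$ strongly in $L^2$ as $t\downarrow 0$, which is what gives meaning to $v(0)-u_0$ appearing on the right of (\ref{varineq}); this must be extracted from the energy estimate rather than from any compactness in $\BV$.
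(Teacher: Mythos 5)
Your overall scheme coincides with the paper's (minimize $\Feps$ over the class $\mathcal{K}_{u_0}^{\varepsilon}$, pass to the limit $\varepsilon\searrow 0$, prove uniqueness, patch cylinders together for the global solution), but there is a genuine gap exactly at the step you dispatch in one sentence: the $\varepsilon$-uniform energy bounds. Comparing $u_\varepsilon$ with the time-independent extension $v\equiv u_0$ only gives $\Feps(u_\varepsilon)\le(1-e^{-T/\varepsilon})\,\|Du_0\|(\Omstar)$, i.e.
\begin{equation*}
\int_0^T e^{-\frac{t}{\varepsilon}}\Big[\tfrac12\int_{\Omstar}|\partial_tu_\varepsilon|^2\d\mu+\tfrac1\varepsilon\|Du_\varepsilon(t)\|(\Omstar)\Big]\dt\;\le\;\|Du_0\|(\Omstar),
\end{equation*}
and since the weight $e^{-t/\varepsilon}$ is negligible outside a time layer of width $O(\varepsilon)$ near $t=0$, this only yields $\int_0^T\int_{\Omstar}|\partial_tu_\varepsilon|^2\d\mu\dt\le 2e^{T/\varepsilon}\|Du_0\|(\Omstar)$ and $\int_0^T\|Du_\varepsilon(t)\|(\Omstar)\dt\le \varepsilon e^{T/\varepsilon}\|Du_0\|(\Omstar)$, which blow up as $\varepsilon\searrow 0$. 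So neither $\sup_t\|u_\varepsilon(t)\|_{L^2(\Omstar)}$ nor the quantities needed to invoke Lemma \ref{compactness} uniformly in $\varepsilon$ are controlled this way. In the paper these uniform bounds \eqref{uniformtimeeps}, \eqref{L2boundeps}, \eqref{hoeldereps}, \eqref{L1BVboundwitheps} are the heart of the argument and require two extra ideas: the minimality is first rewritten (Section \ref{subsec:rewrite}) using perturbations of the special form $v_{\varepsilon,\delta}=u_\varepsilon+\delta e^{t/\varepsilon}\zeta\varphi$, whose exponential factor cancels the degenerate weight and produces the unweighted inequality \eqref{minrewritten} with only an $O(\varepsilon)$ remainder; this is then tested with the time-mollified choice $\varphi=-h\,\partial_t[u_\varepsilon]_h^{u_0}$ and $h\searrow 0$ (Section \ref{subsec-E-bound}), exploiting \eqref{dermollification} and Lemma \ref{mlemma2}.

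The same issue undermines your step (ii) in the passage to the limit. After inserting the plain convex combination $w_s=(1-s)u_\varepsilon+sv$ and letting $s\searrow0$, the total-variation difference $\|Dv(t)\|(\Omstar)-\|Du_\varepsilon(t)\|(\Omstar)$ still carries the singular factor $\varepsilon^{-1}e^{-t/\varepsilon}$, and you cannot integrate this factor by parts, because $t\mapsto\|Dv(t)\|(\Omstar)$ has no time regularity whatsoever; the cancellation must be built into the perturbation itself (the $e^{t/\varepsilon}$ factor) or into a mollification, which is precisely what the paper does. Note also that in the paper the terminal and initial $L^2$-terms of \eqref{varineq} do not arise from integrating by parts against the weight, but from the cutoff $\zeta_\vartheta$ in Section \ref{subsec-limit} (the $\zeta_\vartheta'$-terms) combined with the H\"older bound \eqref{hoeldereps}. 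Your uniqueness argument (inserting each solution into the other's inequality after a time regularisation) is a legitimate alternative to the paper's comparison principle, Lemma \ref{comparelem}, which instead uses $\min\{u,\tilde u\}$ and $\max\{u,\tilde u\}$ together with Lemma \ref{minmaxvar}, and your diagonal construction of the global solution matches the paper; but as written the proposal does not prove the uniform energy estimates nor the conversion of the weighted first-variation inequality into \eqref{varineq}, which are the core of the existence proof.
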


\begin{theorem}\label{maintheo2}
Suppose that the Cauchy-Dirichlet datum $u_0$ fulfills the requirements of (\ref{datum}). Then, any variational solution in the sense of Definition \ref{varsoldef} on $\Om_T$ with $T \in (0, \infty]$ satisfies
\begin{align*}
\partial_tu \in L^2(\Omstar) \text{ and } u \in C^{0, \frac{1}{2}}\left([0, \tau]; L^2(\Omstar)\right) \text{ for all } \tau \in \R \cap (0, T].
\end{align*}
Furthermore, for the time derivative $\partial_tu$ there holds the quantitative bound
\begin{align*}
\int_0^T\int_{\Omstar}|\partial_tu|^2\d\mu\dt \leq \|Du_0\|(\Omstar).
\end{align*}
Finally, for any $t_1, t_2 \in \R$ with $0 \leq t_1 < t_2 \leq T$ one has the energy estimate
\begin{align}\label{energymain}
\frac{1}{t_2 - t_1}\int_{t_1}^{t_2}\|Du(t)\|(\Omstar)\dt \leq \|Du_0\|(\Omstar).
\end{align}
\end{theorem}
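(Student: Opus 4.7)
The plan is to derive the three parts of the statement — the $L^2$ bound on $\partial_t u$, the Hölder continuity, and the averaged total-variation estimate — sequentially, with a single unifying device: an exponential time mollification of the solution that is admissible in (\ref{varineq}).

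For the $L^2$-bound on the time derivative, I would introduce
\begin{align*}
[u]_h(t) \coloneqq e^{-t/h}u_0 + \frac{1}{h}\int_0^t e^{(s-t)/h}u(s)\ds, \qquad h\in (0,1].
\end{align*}
This satisfies $\partial_t[u]_h = (u-[u]_h)/h$ and $[u]_h(0)=u_0$; the boundary trace $[u]_h(\cdot,t)\in \BV_{u_0}(\Om)$ holds because $[u]_h$ is a convex combination of $u_0$ and $u(\cdot,s)$, each equal to $u_0$ on $\Omstar\setminus\Om$, while the weak measurability in the sense of (\ref{pairing}) passes through the defining integral via Fubini. Using $v=[u]_h$ in (\ref{varineq}) the cross term collapses to $-h^{-1}\|u-[u]_h\|_{L^2(\Omstar)}^2$, whereas convexity of $w\mapsto \|Dw\|(\Omstar)$ from Lemma \ref{totvar-sublin} gives
\begin{align*}
\|D[u]_h(t)\|(\Omstar) \leq e^{-t/h}\|Du_0\|(\Omstar) + \frac{1}{h}\int_0^t e^{(s-t)/h}\|Du(s)\|(\Omstar)\ds.
\end{align*}
Integrating in $t$ and applying Fubini, the $\|Du\|$-contributions on the two sides of (\ref{varineq}) cancel up to an error of order $h\|Du_0\|(\Omstar)$, yielding the uniform bound $\int_0^T\|\partial_t[u]_h\|_{L^2(\Omstar)}^2\dt \leq \|Du_0\|(\Omstar)$. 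Since $[u]_h\to u$ strongly in $L^2(\Omstar_T)$ and $\partial_t[u]_h$ is uniformly bounded in $L^2$, its weak limit must coincide with $\partial_t u$, and lower semicontinuity yields the quantitative bound.

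The Hölder regularity then follows at once from Cauchy–Schwarz applied to $u(t_2)-u(t_1)=\int_{t_1}^{t_2}\partial_t u\dt$, combined with the identification $u(0)=u_0$ in $L^2(\Omstar)$, which one extracts from (\ref{varineq}) by a standard comparison with $v\equiv u_0$. The same choice $v\equiv u_0$ in (\ref{varineq}) immediately yields the special case $t_1=0$ of (\ref{energymain}), namely $\int_0^T\|Du(t)\|(\Omstar)\dt \leq T\|Du_0\|(\Omstar) - \tfrac12\|u(T)-u_0\|_{L^2(\Omstar)}^2$. To upgrade this to an arbitrary subinterval $[t_1,t_2]$, I would use the cutoff test function $v_\varepsilon(t) := \chi_\varepsilon(t)u_0 + (1-\chi_\varepsilon(t))u(t)$, where $\chi_\varepsilon$ is a piecewise linear bump equal to $1$ on $[t_1+\varepsilon, t_2-\varepsilon]$ and $0$ outside $[t_1,t_2]$; this is admissible in (\ref{varineq}) precisely because $\partial_t u\in L^2(\Omstar_T)$. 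Outside $[t_1,t_2]$ the $u$-terms cancel, on the bulk one picks up the constant $\|Du_0\|(\Omstar)$, and letting $\varepsilon\to 0$ the transition layers contribute quadratic boundary terms whose sign, together with the $L^2$-continuity from the previous step, allows them to be absorbed.

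The principal obstacle will be the mollification step: verifying in the metric/derivations framework of Section \ref{sec-setting} that $[u]_h$ is genuinely admissible — weak measurability via (\ref{pairing}) and the $u_0$-trace must be preserved without appealing to any Euclidean distributional duality — and, most delicately, rigorously identifying the weak $L^2$-limit of $\partial_t[u]_h$ with $\partial_t u$ while keeping the constant in the bound sharp enough to equal $\|Du_0\|(\Omstar)$; once this identification and the associated convexity estimate for $\|D[u]_h\|$ are in hand, the remaining arguments are essentially routine manipulations of the variational inequality.
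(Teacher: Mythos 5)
Your treatment of the first two assertions is correct and follows essentially the same route as the paper: you use the exponential mollification \eqref{mollification} as comparison map in \eqref{varineq}, the identity $\partial_t[u]_h^{u_0}=\tfrac1h(u-[u]_h^{u_0})$ turns the cross term into $-h\int_0^T\|\partial_t[u]_h^{u_0}\|_{L^2(\Omstar)}^2\dt$, Lemma \ref{mlemma2} together with Lemma \ref{mlemma1} gives $\int_0^T\|D[u]_h^{u_0}(t)\|(\Omstar)\dt\le\int_0^T\|Du(t)\|(\Omstar)\dt+h\|Du_0\|(\Omstar)$, and identifying the weak $L^2$-limit of $\partial_t[u]_h^{u_0}$ with $\partial_t u$ yields the quantitative bound; the $C^{0,\frac12}$-estimate is then Cauchy--Schwarz. (The paper additionally shifts time, $\tilde u(s)=u(s+t_1)$, to obtain the finer decay estimate \eqref{time1}; you do not, and this matters below.)

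The gap is in your proof of \eqref{energymain} for $t_1>0$. With the bump $\chi_\varepsilon$ supported in $[t_1,t_2]$ and $v_\varepsilon=\chi_\varepsilon u_0+(1-\chi_\varepsilon)u$, the rewritten minimality condition, convexity of the total variation and cancellation of the $u$-terms outside the bump give $\int_0^T\chi_\varepsilon(t)\|Du(t)\|(\Omstar)\dt\le\|Du_0\|(\Omstar)\int_0^T\chi_\varepsilon\dt+\tfrac12\int_0^T\chi_\varepsilon'(t)\|u(t)-u_0\|_{L^2(\Omstar)}^2\dt$, and as $\varepsilon\searrow0$ the transition layers produce $+\tfrac12\|u(t_1)-u_0\|_{L^2(\Omstar)}^2-\tfrac12\|u(t_2)-u_0\|_{L^2(\Omstar)}^2$. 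The term at $t_2$ has a favourable sign, but the term at $t_1$ enters with a plus sign and is a fixed positive quantity (your H\"older bound only gives $\tfrac12\|u(t_1)-u_0\|_{L^2(\Omstar)}^2\le\tfrac{t_1}{2}\|Du_0\|(\Omstar)$), so after dividing by $t_2-t_1$ you only reach $\frac{1}{t_2-t_1}\int_{t_1}^{t_2}\|Du(t)\|(\Omstar)\dt\le\bigl(1+\tfrac{t_1}{2(t_2-t_1)}\bigr)\|Du_0\|(\Omstar)$, which is not \eqref{energymain}; there is nothing on the left-hand side to absorb this term, so it cannot simply be discarded. The paper avoids this by choosing the cutoff $\zeta_{t_1,t_2}\equiv1$ on all of $[0,t_1]$ and the comparison map $v=u+\zeta_{t_1,t_2}([u]_h^{u_0}-u)$, i.e.\ the test function stays (up to mollification) equal to $u$ itself rather than to $u_0$ before $t_1$: then the only boundary contribution is $\|Du_0\|(\Omstar)$ at $t=0$, coming from $[\|Du(t)\|(\Omstar)]_h^{\|Du_0\|(\Omstar)}(0)$, and one even gains the good-signed term $-\int_0^{t_2}\zeta_{t_1,t_2}\int_{\Omstar}|\partial_tu|^2\d\mu\dt$. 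Alternatively, running your mollification argument on the shifted solution $\tilde u(s)=u(s+t_1)$ gives $\int_{t_1}^{t_2}\int_{\Omstar}|\partial_tu|^2\d\mu\dt\le\|Du(t_1)\|(\Omstar)-\|Du(t_2)\|(\Omstar)$, i.e.\ near-monotonicity of $t\mapsto\|Du(t)\|(\Omstar)$, from which \eqref{energymain} follows; your aggregated estimate over the whole interval $(0,T)$ does not deliver this information.
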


\section{Preliminaries}

\subsection{Mollification in Time}
As variational solutions in the sense of Definition \ref{varsoldef} lack the appropriate time-regularity, they are in general not admissible as comparison maps in (\ref{varineq}). This is why a mollification procedure with respect to time (also known as \textit{time-smoothing}) has to be performed. Indeed, such a technique will make it possible to show that the time derivative of a variational solution exists and belongs to $L^{2}$.
Below, we shall present the precise construction of time-smoothing as illustrated in \cite{boegeleintv}.

\smallskip

Let $X$ be a Banach space and $v_{0}\in X$. Now, consider some $v\in L ^{r}(0,T;X)$ for some $1\leq r\leq\infty$ and define for $h\in (0,T]$ and $t\in [0,T]$ the mollification in time by 
\begin{equation}\label{mollification}
[v]_{h}^{v_{0}}(t):=e^{-\frac{t}{h}}v_{0}+\frac{1}{h}\int_{0}^{t}e^{-\frac{s-t}{h}}v(s)\ds.
\end{equation}
It can be shown that the mollified function $[v]_{h}^{v_{0}}$ solves the ordinary differential equation
\begin{equation}\label{dermollification}
\partial_{t}[v]_{h}^{v_{0}}=-\frac{1}{h}([v]_{h}^{v_{0}}-v)
\end{equation}
with initial condition $[v]_{h}^{v_{0}}(0)=v_{0}$.\\
Regarding the basic properties of the mollification $[\cdot]_{h}^{v_{0}}$, we refer to the following Lemma; see \cite[Appendix B]{Boegeleinpq} and \cite[Lemma 2.2]{KinnunenLindqvist} for the full proofs of the respective results.

\smallskip

\begin{lemma}\label{mlemma1} Suppose $X$ is a separable Banach space and $v_{0}\in X$. If $v\in L^{r}(0,T;X)$ for some $r\geq 1$, then the mollification $[v]_{h}^{v_{0}}$ defined in (\ref{mollification}) fulfills $[v]_{h}^{v_{0}}\in C^{\infty}([0,T];X)$ and for any $t_{0}\in (0,T]$ there holds
\begin{equation*}
\Vert [v]_{h}^{v_{0}}\Vert _{L^{r}(0,t_{0};X)}\leq\Vert v\Vert_{L^{r}(0,t_{0};X)}+\left[\frac{h}{r}\left(1-e^{-\frac{t_{0}r}{h}}\right)\right]^{\frac{1}{r}}\Vert v_{0}\Vert_{X}.
\end{equation*}
In the case $r=\infty$ the bracket $[\cdots]^{\frac{1}{r}}$ in the preceding inequality has to be interpreted as $1$. Moreover, $[v]_{h}^{v_{0}}\rightarrow v$ in $L^{r}(0,T;X)$ as $h\searrow 0$. Finally, if $v\in C^{0}([0,T];X)$, then $[v]_{h}^{v_{0}}\in C^{0}([0,T];X)$, $[v]_{h}^{v_{0}}(0)=v_{0}$, and moreover $[v]_{h}^{v_{0}}\rightarrow v$ in $C^{0}([0,T];X)$ as $h\searrow 0$.
\end{lemma}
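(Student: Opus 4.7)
The plan is to exploit the fact that $[v]_h^{v_0}$ is (by the ODE \eqref{dermollification}) the Duhamel representation
\[
[v]_h^{v_0}(t)=e^{-t/h}v_0+\frac{1}{h}\int_0^t e^{(s-t)/h}v(s)\ds,
\]
so that all four assertions reduce to direct estimates on the exponential kernel $k_h(\sigma):=h^{-1}e^{-\sigma/h}\mathds{1}_{\sigma\geq 0}$ together with a density argument. I would first record that $k_h\in L^1(0,\infty)$ with $\|k_h\|_{L^1(0,\infty)}=1$ and $\|k_h\|_{L^1(0,t_0)}=1-e^{-t_0/h}\leq 1$, as these are the only facts about the kernel that enter.

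\textbf{Smoothness.} Since the integrand depends smoothly on $t$ and the derivative $\partial_t^k[e^{(s-t)/h}]$ is bounded on $[0,T]\times[0,T]$ by a constant depending only on $h$, $k$, $T$, differentiation under the integral sign (which is justified in the Banach-valued setting by a dominated convergence argument applied to difference quotients in $X$) gives $[v]_h^{v_0}\in C^\infty([0,T];X)$.

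\textbf{The $L^r$ bound.} I split $[v]_h^{v_0}=I+II$ with $I(t)=e^{-t/h}v_0$ and $II(t)=\frac{1}{h}\int_0^t e^{(s-t)/h}v(s)\ds$. For $I$, a direct computation gives $\|I\|_{L^r(0,t_0;X)}=\bigl[\int_0^{t_0}e^{-rt/h}\dt\bigr]^{1/r}\|v_0\|_X=\bigl[\tfrac{h}{r}(1-e^{-t_0r/h})\bigr]^{1/r}\|v_0\|_X$, which is exactly the second term in the claimed estimate (with the $r=\infty$ case handled by taking the essential supremum directly). For $II$, Minkowski's integral inequality in $X$ yields $\|II(t)\|_X\leq (k_h*\|v(\cdot)\|_X)(t)$ on $(0,t_0)$, where $v$ is extended by zero outside $(0,t_0)$. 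Young's convolution inequality for real-valued functions then gives $\|II\|_{L^r(0,t_0;X)}\leq\|k_h\|_{L^1(0,t_0)}\|v\|_{L^r(0,t_0;X)}\leq\|v\|_{L^r(0,t_0;X)}$. Adding the two bounds via the triangle inequality in $L^r$ produces the claimed inequality.

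\textbf{Convergence.} For $v\in L^r(0,T;X)$ with $r<\infty$, the operator $v\mapsto[v]_h^{v_0}-v$ is uniformly bounded on $L^r(0,T;X)$ in $h\in(0,T]$ by the previous step. I would therefore fix $\eta>0$, choose $w\in C^0([0,T];X)$ with $\|v-w\|_{L^r(0,T;X)}<\eta$ (using density of continuous functions in the Bochner space, which requires separability of $X$), and split
\[
[v]_h^{v_0}-v=\bigl([v]_h^{v_0}-[w]_h^{v_0}\bigr)+\bigl([w]_h^{v_0}-w\bigr)+(w-v).
\]
The first and third pieces are controlled by $\eta$ uniformly in $h$; the middle piece is handled by the continuous case, to which I turn next. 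This shows $[v]_h^{v_0}\to v$ in $L^r$ as $h\searrow0$, and a minor adaptation handles $r=\infty$ when $v$ is continuous.

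\textbf{The continuous case.} If $v\in C^0([0,T];X)$, then evaluating the formula at $t=0$ gives $[v]_h^{v_0}(0)=v_0$, and continuity of $[v]_h^{v_0}$ follows from the $C^\infty$ statement. For the uniform convergence, I rewrite
\[
[v]_h^{v_0}(t)-v(t)=e^{-t/h}(v_0-v(t))+\frac{1}{h}\int_0^t e^{(s-t)/h}\bigl(v(s)-v(t)\bigr)\ds
\]
(using $\int_0^t h^{-1}e^{(s-t)/h}\ds=1-e^{-t/h}$ to absorb a pure $v(t)$ term, modulo a boundary correction that tends to zero). The first term is bounded by $e^{-t/h}\|v_0-v(t)\|_X$, which tends to zero uniformly away from $t=0$ and equals zero at $t=0$; uniform continuity of $v$ on $[0,T]$ together with the identity for $\|k_h\|_{L^1}$ shows that the second term also tends to zero uniformly in $t$.

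The main technical obstacle is the $L^r$ estimate in the Banach-valued setting: one must be careful that Minkowski's integral inequality and Young's inequality extend from scalar kernels to $X$-valued integrands, which they do precisely because the kernel $k_h$ is scalar and non-negative. Everything else is essentially bookkeeping around the explicit Duhamel representation and a standard density argument.
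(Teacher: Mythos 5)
The paper itself does not prove this lemma; it only cites \cite[Appendix B]{Boegeleinpq} and \cite[Lemma 2.2]{KinnunenLindqvist}, so your argument has to stand on its own. Your treatment of the $L^r$ bound (splitting off $e^{-t/h}v_0$, computing its norm exactly, and estimating the convolution part by Minkowski's integral inequality plus Young's inequality with the scalar kernel $k_h$) and your density argument for the $L^r$ convergence are correct and are essentially the standard proof. The genuine gap is the smoothness step. Having (correctly) fixed the representation $[v]_h^{v_0}(t)=e^{-t/h}v_0+\frac1h\int_0^t e^{(s-t)/h}v(s)\,\mathrm{d}s$ — the upper limit $T$ in (\ref{mollification}) is a misprint, as (\ref{dermollification}) and the proof of Lemma \ref{mlemma2} show — you cannot differentiate under the integral sign as if the domain of integration were fixed: the variable upper limit produces the boundary term $h^{-1}v(t)$, and consistently with (\ref{dermollification}) one only gets $\partial_t[v]_h^{v_0}=h^{-1}(v-[v]_h^{v_0})\in L^r$. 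Hence for $v\in L^r(0,T;X)$ the mollification is absolutely continuous with $L^r$ derivative, so it lies in $C^0([0,T];X)$, but it is in general not even $C^1$ (take $X=\R$ and $v$ a step function: $\partial_t[v]_h^{v_0}$ jumps). So your argument does not prove the $C^\infty$ claim; that claim is itself an overstatement of what the cited sources assert (namely $C^0$ together with the $W^{1,r}$ information coming from the ODE), and $C^0$ is also all that is used later in the paper. The correct move is to prove continuity/absolute continuity directly from the representation and to flag the discrepancy, not to assert $C^\infty$ via an argument that ignores the moving endpoint.

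A second, smaller inaccuracy: in the continuous case you claim the term $e^{-t/h}(v_0-v(t))$ ``equals zero at $t=0$''; at $t=0$ it equals $v_0-v(0)$, and since $[v]_h^{v_0}(0)=v_0$ for every $h$, one has $\sup_{t\in[0,T]}\|[v]_h^{v_0}(t)-v(t)\|_X\ge\|v_0-v(0)\|_X$. Thus uniform convergence on all of $[0,T]$ requires the compatibility $v(0)=v_0$ (or must be restricted to $[\delta,T]$); your proof should state this hypothesis explicitly rather than gloss over it. This does not harm your $L^r$ density argument, because for continuous $w$ the middle piece still tends to zero in $L^r$: the offending term satisfies $\|e^{-t/h}(v_0-w(t))\|_{L^r(0,T;X)}\le (h/r)^{1/r}\left(\|v_0\|_X+\|w\|_{C^0([0,T];X)}\right)\to0$ as $h\searrow0$, and the remaining term tends to zero uniformly by uniform continuity of $w$, exactly as you argue. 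With the smoothness claim downgraded to $C^0$ and the compatibility condition made explicit, your proof is complete and matches the route taken in the references.
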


With the next Lemma we show that $L^1_w(0,T;\BV(\Omega))$ is closed under time-smoothing:

\begin{lemma}\label{mlemma2} Let $T>0$, $v_{0}\in \BV(\Omega)$ and $v\in L ^{1}_{w}(0,T; \BV(\Omega))$. Then $[v]_{h}^{v_{0}}$ as defined in (\ref{mollification}) satisfies that $[v]_{h}^{v_{0}}\in  L ^{1}_{w}(0,T; \BV(\Omega))$. Moreover
\begin{equation*}
\Vert D [v]_{h}^{v_{0}}(t)\Vert(\Omega)\leq\left[\Vert Dv(t)\Vert(\Omega)\right]_{h}^{\|Dv_0\|(\Om)}\ \ \ \textrm{for any } t\in(0,T)
\end{equation*}
and
\begin{equation}\label{eqmollification}
\lim_{h\searrow 0}\int_{0}^{T}\Vert D [v]_{h}^{v_{0}}(t)\Vert(\Omega)\dt=\int_{0}^{T}\Vert Dv(t)\Vert(\Omega)\dt.
\end{equation}
\end{lemma}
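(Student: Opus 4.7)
The plan is to exploit the dual representation (\ref{TVRep}) of the total variation: since $\|D u(t)\|(\Omega)$ is a supremum of pairings against derivations $\mathfrak{d}\in \Der_b(\Omega)$ with $\supp(\mathfrak{d})\Subset\Omega$ and $|\mathfrak{d}|\leq 1$, one can transfer the bound from the scalar mollification lemma (Lemma \ref{mlemma1} applied to $X=\mathbb{R}$) to a bound on $\|D[v]_h^{v_0}(t)\|(\Omega)$. This reduces the argument to three steps: weak measurability, a pointwise $\BV$-bound, and the integral convergence.

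First I would verify that $[v]_h^{v_0}$ is weakly measurable in the sense of (\ref{pairing}). For any fixed $\mathfrak{d}\in\Der_b(\Omega)$ with $\supp(\mathfrak{d})\Subset\Omega$, Fubini's theorem gives
\begin{align*}
\int_\Omega [v]_h^{v_0}(t)\div(\mathfrak{d})\d\mu = e^{-\frac{t}{h}}\int_\Omega v_0\div(\mathfrak{d})\d\mu + \frac{1}{h}\int_0^T e^{-\frac{s-t}{h}}\int_\Omega v(s)\div(\mathfrak{d})\d\mu\ds,
\end{align*}
so the $t$-map on the left is the sum of a continuous function and a convolution (with a smooth kernel) of the measurable function $s\mapsto\int_\Omega v(s)\div(\mathfrak{d})\d\mu$, the latter being measurable by the weak measurability of $v$. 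Hence the required measurability follows.

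Next, for the pointwise bound I would fix an admissible $\mathfrak{d}$ as in (\ref{TVRep}), estimate the two integrals on the right of the display by $\|Dv_0\|(\Omega)$ and $\|Dv(s)\|(\Omega)$ respectively, and then take the supremum over $\mathfrak{d}$. The outcome is precisely the scalar time-mollification of $s\mapsto\|Dv(s)\|(\Omega)$ with initial datum $\|Dv_0\|(\Omega)$ evaluated at $t$, i.e.\ the quantity denoted $[\|Dv(t)\|(\Omega)]_h(t)$ in the statement. By Lemma \ref{mlemma1} applied with $X=\mathbb{R}$ to the $L^1$-function $t\mapsto\|Dv(t)\|(\Omega)$, this upper bound lies in $L^1(0,T)$; combined with the weak measurability just established, this yields $[v]_h^{v_0}\in L^1_w(0,T;\BV(\Omega))$.

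For the convergence (\ref{eqmollification}) I would combine matching upper and lower bounds. Integrating the pointwise estimate and invoking the $L^1$-convergence $[\|Dv(\cdot)\|(\Omega)]_h\to\|Dv(\cdot)\|(\Omega)$ from Lemma \ref{mlemma1} yields
\begin{align*}
\limsup_{h\searrow 0}\int_0^T\|D[v]_h^{v_0}(t)\|(\Omega)\dt \leq \int_0^T\|Dv(t)\|(\Omega)\dt.
\end{align*}
For the matching liminf, Lemma \ref{mlemma1} also gives $[v]_h^{v_0}\to v$ in $L^1(0,T;L^1(\Omega))=L^1(\Omega_T)$, so passing to a subsequence we have $[v]_h^{v_0}(t)\to v(t)$ in $L^1(\Omega)$ for a.e.\ $t$; the lower semicontinuity in Proposition \ref{totvar-lsc} applied slicewise, followed by Fatou's lemma in $t$, produces the reverse inequality. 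The main subtlety to keep in mind is that the pointwise $\BV$-bound genuinely requires the dual characterization (\ref{TVRep}): without it one could not commute the supremum over $\mathfrak{d}$ with the weighted $s$-integration. Once (\ref{TVRep}) is invoked, the whole argument reduces to the corresponding statement for the scalar mollification of $t\mapsto\|Dv(t)\|(\Omega)$.
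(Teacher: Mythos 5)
Your proposal is correct and follows essentially the same route as the paper's proof: Fubini plus the dual representation \eqref{TVRep} to get weak measurability and the pointwise bound $\Vert D[v]_h^{v_0}(t)\Vert(\Omega)\leq[\Vert Dv(t)\Vert(\Omega)]_h^{\Vert Dv_0\Vert(\Omega)}$, Lemma \ref{mlemma1} applied to the scalar map $t\mapsto\Vert Dv(t)\Vert(\Omega)$ for the integrated upper bound, and slicewise $L^1$-convergence with Proposition \ref{totvar-lsc} and Fatou for the matching lower bound. The only cosmetic difference is that you invoke the $L^1(0,T)$-convergence of the scalar mollification for the limsup, while the paper simply integrates the pointwise bound to get $h\Vert Dv_0\Vert(\Omega)+\int_0^T\Vert Dv(t)\Vert(\Omega)\dt$; both steps come from Lemma \ref{mlemma1} and are interchangeable.
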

\begin{proof}
By Lemma \ref{mlemma1} (applied with $X=L^{1}(\Omega)$ and $r=1$) we conclude that $[v]_{h}^{v_{0}}\rightarrow v$ in $L^{1}(0,T;L^{1}(\Omega))$, so that $[v]_{h}^{v_{0}}(t)\rightarrow v(t)$ in $L^{1}(\Omega)$ for almost every $t\in (0,T)$. As a test-map, let us consider a bounded derivation $\mathfrak{d}\in\Der_{b}(\Omega)$. Then, applying Fubini's Theorem and the definition of time-smoothing we obtain for any $t\in[0,T]$ that 
\begin{align*}
\begin{aligned}
&\int_{\Omega}[v]_{h}^{v_{0}}(t)\div(\mathfrak{d})\d\mu  = -e^{-\frac{t}{h}}\int_{\Omega}v_{0}\div(\mathfrak{d})\d\mu-\frac{1}{h}\int_{0}^{t}e^{\frac{s-t}{h}}\int_{\Omega}v(s)\div(\mathfrak{d})\d\mu\d s 
\end{aligned}
\end{align*}
holds true. This implies in particular that $$(0,T) \ni t\mapsto \displaystyle\int_{\Omega}[v]_{h}^{v_{0}}(t)\div(\mathfrak{d})\d\mu$$ is measurable. Moreover, taking the supremum over all $\mathfrak{d}\in\Der_{b}(X)$ with $\supp(\mathfrak{d})\Subset\Omega$ and $\vert\mathfrak{d}\vert\leq 1$, we conclude the following bound for the total variation of $[v]_{h}^{v_{0}}(t)$, i.e. we have that
\begin{align*}
\Vert D[v]_{h}^{v_{0}}(t)\Vert(\Omega) & \leq e^{-\frac{t}{h}}\Vert Dv_{0}\Vert(\Omega)+\frac{1}{h}\int_{0}^{t}e^{\frac{s-t}{h}}\Vert Dv(s)\Vert(\Omstar) \d s \\
& =\left[\Vert Dv(t)\Vert(\Omega)\right]_{h}^{\Vert Dv_{0}\Vert(\Omega)}<\infty
\end{align*}
holds true and therefore $[v]_{h}^{v_{0}}(t)\in \BV(\Omega)$.

Since $t\mapsto \Vert Dv(t)\Vert(\Omega)\in L ^{1}(0,T)$ by the definition of $L^{1}_{w}(0,T; \BV(\Omega))$ we obtain, using Lemma \ref{mlemma1} applied to $\Vert Du(t)\Vert(\Omega)$,
\begin{equation}\label{last-ineq}
\int_{0}^{T}\Vert D[v]_{h}^{v_{0}}(t)\Vert(\Omega)\d t\leq h \Vert Dv_{0}\Vert(\Omega)+\int_{0}^{T}\Vert Dv(t)\Vert (\Omega)\d t<\infty.
\end{equation}
This proves $[v]_{h}^{v_{0}}\in L^{1}_{w}(0,T; \BV(\Omega))$. Now, by the lower-semicontinuity of the total variation, a combination of Fatou's Lemma and \eqref{last-ineq} yields
\begin{align*}
\int_{0}^{T}\Vert Dv(t)\Vert (\Omega)\d t&\leq \int_{0}^{T}\liminf_{h\searrow 0}\Vert D[v]_{h}^{v_{0}}(t)\Vert(\Omega)\d t\\
&\leq\liminf_{h\searrow 0}\int_{0}^{T}\Vert D[v]_{h}^{v_{0}}(t)\Vert(\Omega)\d t\leq \int_{0}^{T}\Vert Dv(t)\Vert (\Omega)\d t,
\end{align*}
hence (\ref{eqmollification}) is established. The Lemma is thus proved.

\end{proof}

\subsection{Localizing the problem on a smaller cylinder}\label{subsec:local}
Let $u\in L_{w}^{1}(0,T; \BV_{u_{0}}(\Omega))\cap C^{0}([0,T]; L^{2}(\Omega^{*}))$ be a variational solution on some cylinder $\Omega_{T}^{*}$ with $T\in(0,\infty)$ in the sense of Definition \ref{varsoldef}.\\
In this section we want to prove that any such $u$ is a variational solution also on any subcylinder of the form $\Omega_{t_{1},t_{2}}^{*}\coloneqq\Omega^{*}\times(t_{1},t_{2})$ with $0\leq t_{1}<t_{2}\leq T$. To obtain such a localization, let $v\in L_{w}^{1}(t_{1},t_{2}; \BV_{u_{0}}(\Omega))$ with  $\partial_{t}v\in L^{2}(\Omega_{t_{1},t_{2}}^{*})$, $v(t_{1})\in L ^{2}(\Omega^{*})$, and choose for fixed $\vartheta\in \left(0,\frac{1}{2}(t_{2}-t_{1})\right)$ the cutoff function
\begin{align*}
\zeta_{\vartheta} := \left\{\begin{array}{cl}
0 & \text{ if } t \in [0, t_1], \\ 
\dfrac{1}{\vartheta}(t - t_1) & \text{ if } t \in (t_1, t_1 + \vartheta), \\ 
1 & \text{ if } t \in [t_1 + \vartheta, t_2 - \vartheta], \\ 
\dfrac{1}{\vartheta}(t_2 - t) & \text{ if } t \in (t_2 - \vartheta, t_2), \\ 
0 & \text{ if } t \in [t_2, T].
\end{array}\right. 
\end{align*}
The comparison map is now defined by $\tilde{v}:= \zeta_{\vartheta}v+(1-\zeta_{\vartheta})[u]_{h}^{u_{0}}$. Here we extended $\zeta_{\vartheta}$ outside of $\Omega^{*}\times [t_{1},t_{2}]$ by $0$.

Let us first check that $\tilde{v}$ is indeed admissible. By virtue of Lemma \ref{mlemma2} there holds  $[u]_{h}^{u_{0}}\in L_{w}^{1}(0,T; \BV_{u_{0}}(\Omega^{*}))$. Hence, $\zeta_{\vartheta}v, (1-\zeta_{\vartheta})[u]_{h}^{u_{0}}\in L_{w}^{1}(0,T; \BV_{u_{0}}(\Omega^{*})),$ and the same applies to $\tilde{v}$. Moreover, $\partial_{t}\tilde{v}\in L^{2}(\Omega_{T}^{*})$ since $\partial_{t}[u]_{h}^{u_{0}}\in L^{2}(\Omega_{T}^{*})$. Also, we have the validity of (\ref{dermollification}) and that $u\in C^{0}([0,T];L ^{2}(\Omega^{*}))$, so an application of Lemma \ref{mlemma1} entitles us to conclude that $[u]_{h}^{u_{0}}\in C^{0}([0,T];L ^{2}(\Omega^{*}))$. Finally, it can be easily seen that the other properties - like the boundary condition $\tilde{v}(x,t)=u_0(x)$ on $\Omstar$ are inherited from the corresponding properties of $v$ and $[v]_{h}^{v_{0}}$ and from the fact that $\tilde{v}$ is defined as a convex combination. Then, $\tilde{v}$ accounts as a suitable candidate for the variational inequality \eqref{varineq}.
There, we rewrite the integrand containing the time derivative in the following way: 
\begin{align*}
& \partial_{t}(\zeta_{\vartheta}v+(1-\zeta_{\vartheta})[u]_{h}^{u_{0}})(\zeta_{\vartheta}v+(1-\zeta_{\vartheta})[u]_{h}^{u_{0}}-u) \\
& \mskip+100mu =\zeta_{\vartheta}\partial_{t}v(\zeta_{\vartheta}v+(1-\zeta_{\vartheta})[u]_{h}^{u_{0}}-u+\zeta_{\vartheta}u-\zeta_{\vartheta}u)\\
& \mskip+150mu +(1-\zeta_{\vartheta})\partial_{t}[u]_{h}^{u_{0}}(\left[[u]_{h}^{u_{0}}-u\right]+\zeta_{\vartheta}(v-[u]_{h}^{u_{0}}))\\
& \mskip+150mu +\zeta'_{\vartheta}(v-[u]_{h}^{u_{0}})(\left[[u]_{h}^{u_{0}}-u\right]+\zeta_{\vartheta}(v-[u]_{h}^{u_{0}}))\\
& \mskip+100mu =\zeta_{\vartheta}^{2}\partial_{t}v(v-u)+\zeta_{\vartheta}(1-\zeta_{\vartheta})\partial_{t}v([u]_{h}^{u_{0}}-u)\\
& \mskip+150mu +(1-\zeta_{\vartheta})(\partial_{t}[u]_{h}^{u_{0}})([u]_{h}^{u_{0}}-u)\\
& \mskip+150mu +\zeta_{\vartheta}(1-\zeta_{\vartheta})(\partial_{t}[u]_{h}^{u_{0}})(v-[u]_{h}^{u_{0}})+\zeta_{\vartheta}'(v-[u]_{h}^{u_{0}})\\
& \mskip+150mu +\zeta_{\vartheta}'\zeta_{\vartheta}(v-[u]_{h}^{u_{0}})^{2}\\
& \mskip+100mu =\zeta_{\vartheta}^{2}\partial_{t}v(v-u)+(1-\zeta_{\vartheta})(\partial_{t}[u]_{h}^{u_{0}})([u]_{h}^{u_{0}}-u)\\
& \mskip+150mu +\zeta_{\vartheta}(1-\zeta_{\vartheta})\left[\partial_{t}v([u]_{h}^{u_{0}}-u)+\partial_{t}[u]_{h}^{u_{0}}(v-[u]_{h}^{u_{0}})\right]\\
& \mskip+150mu +\zeta_{\vartheta}'(v-[u]_{h}^{u_{0}})([u]_{h}^{u_{0}}-u)+\zeta_{\vartheta}'\zeta_{\vartheta}(v-[u]_{h}^{u_{0}})^{2}.
\end{align*}
In the integral containing $\Vert D\tilde{v}(t)\Vert(\Omega)$ we use the convexity of the total variation in the form
\begin{align}\label{convex-est-var}
\Vert D\tilde{v}(t)\Vert(\Omega)\leq\zeta_{\vartheta}(t)\Vert Dv(t)\Vert(\Omega)+(1-\zeta_{\vartheta}(t))\Vert D[u]_{h}^{u_{0}}(t)\Vert(\Omega).
\end{align}
Plugging the above estimate \eqref{convex-est-var} in the variational inequality, we find that the term 
\begin{align*}
\displaystyle\int_{0}^{T}\Vert Du(t)\Vert(\Omega)\d t
\end{align*}
is lesser or equal than
\begin{align}\label{intlocalizing}
\begin{aligned}
& \int_{t_{1}}^{t_{2}}\left[\int_{\Omega^{*}}\zeta_{\vartheta}^{2}(t)\partial_{t}v(v-u)\d\mu+\zeta_{\vartheta}(t)\Vert Dv(t)\Vert(\Omega^{*})\right]\d t  \\
& \mskip+20mu +\int_{0}^{T}(1-\zeta_{\vartheta}(t))\left[\int_{\Omega^{*}}\partial_{t}[u]_{h}^{u_{0}})([u]_{h}^{u_{0}}-u)\d\mu+\Vert D[u]_{h}^{u_{0}}(t)\Vert(\Omega^{*})\right]\d t  \\
& \mskip+20mu +\int_{t_{1}}^{t_{2}}\zeta_{\vartheta}(t)(1-\zeta_{\vartheta}(t))\int_{\Omega^{*}}\left[\partial_{t}v([u]_{h}^{u_{0}}-u)+\partial_{t}[u]_{h}^{u_{0}}(v-[u]_{h}^{u_{0}})\right]\d\mu\d t\\
& \mskip+20mu +\int_{t_{1}}^{t_{2}}\zeta_{\vartheta}'(t)\int_{\Omega^{*}}(v-[u]_{h}^{u_{0}})([u]_{h}^{u_{0}}-u)\d\mu\dt  \\
& \mskip+20mu +\int_{t_{1}}^{t_{2}}\zeta_{\vartheta}(t)\zeta_{\vartheta}'(t)\int_{\Omega^{*}}\vert v-[u]_{h}^{u_{0}}\vert^{2}\d\mu\d t  \\
& \mskip+20mu -\frac{1}{2}\Vert (\zeta_{\vartheta}v+(1-\zeta_{\vartheta})[u]_{h}^{u_{0}})(T)\Vert_{L^{2}(\Omega^{*})}^{2} +\frac{1}{2}\Vert (\zeta_{\vartheta}v+(1-\zeta_{\vartheta})[u]_{h}^{u_{0}})(0)\Vert_{L^{2}(\Omega^{*})}^{2}. 
\end{aligned}
\end{align}
Since $\zeta_{\vartheta}(T)=0$ and $\zeta_{\vartheta}(0)=0$ the two boundary terms in (\ref{intlocalizing}) simplify to
\begin{equation*}
-\frac{1}{2}\Vert ([u]_{h}^{u_{0}}-u)(T)\Vert_{L^{2}(\Omega^{*})}^{2}+\frac{1}{2}\Vert ([u]_{h}^{u_{0}}-u)(0)\Vert_{L^{2}(\Omega^{*})}^{2}.
\end{equation*}
Now, we pass to the limit as $\vartheta\searrow 0$ and arrive at 
\begin{align}\label{limit-theta-0}
\begin{aligned}
 \int_{t_{1}}^{t_{2}}\Vert Du(t)\Vert(\Omega)\d t & \leq \int_{t_{1}}^{t_{2}}\left[\int_{\Omega^{*}}\partial_{t}v(v-u)\d\mu+\Vert Dv(t)\Vert(\Omega^{*})\right]\d t\\
& \mskip+40mu +\frac{1}{2}\Vert (v-[u]_{h}^{u_{0}})(t_{1})\Vert_{L^{2}(\Omega^{*})}^{2}-\frac{1}{2}\Vert (v-[u]_{h}^{u_{0}})(t_{2})\Vert_{L^{2}(\Omega^{*})}^{2} \\
& \mskip+40mu +\int_{(0,t_{1})\cup(t_{2},T)}\bigg[\int_{\Omega^{*}}\partial_{t}[u]_{h}^{u_{0}}([u]_{h}^{u_{0}}-u)\d\mu \\ 
& \mskip+40mu +[\Vert D[u]_{h}^{u_{0}}(t)\Vert(\Omega^{*})-\Vert Du(t)\Vert(\Omega^{*})]\bigg]\d t  \\
& \mskip+40mu -\frac{1}{2}\Vert ([u]_{h}^{u_{0}}-u)(T)\Vert_{L^{2}(\Omega^{*})}^{2}+\frac{1}{2}\Vert ([u]_{h}^{u_{0}}-u)(0)\Vert_{L^{2}(\Omega^{*})}^{2}  \\
& \mskip+40mu +\int_{\Omega^{*}\times\lbrace t_{1}\rbrace}(v-[u]_{h}^{u_{0}})([u]_{h}^{u_{0}}-u)\d\mu  \\ 
& \mskip+40mu -\int_{\Omega^{*}\times\lbrace t_{2}\rbrace}(v-[u]_{h}^{u_{0}})([u]_{h}^{u_{0}}-u)\d\mu.
\end{aligned}
\end{align}
Above, we exploited the fact that the mixed term containing $\zeta_{\vartheta}(t)(1-\zeta_{\vartheta}(t))$, vanishes as $\vartheta\searrow 0$.\\
In the course of the proof we also used the identity 
\begin{align}\label{identity-loc}
\begin{aligned}
& \lim_{\vartheta\searrow 0}\int_{t_{1}}^{t_{2}}\zeta_{\vartheta}(t)\zeta_{\vartheta}'(t)\int_{\Omega^{*}}\vert v-[u]_{h}^{u_{0}}\vert^{2}\d\mu\d t \\ 
& \mskip+30mu = \frac{1}{2}\Vert (v-[u]_{h}^{u_{0}})(t_{1})\Vert_{L^{2}(\Omega^{*})}^{2} -\frac{1}{2}\Vert (v-[u]_{h}^{u_{0}})(t_{2})\Vert_{L^{2}(\Omega^{*})}^{2},\end{aligned}
\end{align}
which easily follows, since $v,[u]_{h}^{u_{0}}\in C^{0}([t_{1},t_{2}];L^{2}(\Omega^{*}))$ (Lemma \ref{mlemma1}).

By virtue of (\ref{dermollification}), $\partial_{t}[u]_{h}^{u_{0}}([u]_{h}^{u_{0}}-u)\leq 0$, so we can neglect the first term in the third line on the right-hand side of \eqref{limit-theta-0}.

To proceed, we now recall that $[u]_{h}^{u_{0}}\rightarrow u$ in $C^{0}([0,T];L^{2}(\Omega^{*}))$ and $\Vert D[u]_{h}^{u_{0}}\Vert(\Omega)\rightarrow\Vert Du\Vert(\Omega)$. These two facts, which both follow from Lemma \ref{mlemma1} and Lemma \ref{mlemma2}, eventually allow us to conclude that the remaining terms in the last two lines of \eqref{limit-theta-0} vanish in the limit as $h\searrow 0$.

Moreover, for the terms at the right-hand side of \eqref{identity-loc} we find $$\Vert (v-[u]_{h}^{u_{0}})(t_{i})\Vert_{L^{2}(\Omega^{*})}^{2}\rightarrow\Vert (v-u)(t_{i})\Vert_{L^{2}(\Omega^{*})}^{2}$$ for $i\in\lbrace 1,2\rbrace$. All in all, we get that $u$ is a variational solution on the smaller cylinder $\Omega_{t_{1},t_{2}}^{*}$, so it attains the variational inequality 
\begin{align*}
\int_{t_{1}}^{t_{2}}\Vert Du(t)\Vert(\Omega)\d t
&\leq \int_{t_{1}}^{t_{2}}\left[\int_{\Omega^{*}}\partial_{t}v(v-u)\d\mu+\Vert Dv(t)\Vert(\Omega^{*})\right]\d t\\
& \mskip+25mu +\frac{1}{2}\Vert (v-u)(t_{1})\Vert_{L^{2}(\Omega^{*})}^{2}-\frac{1}{2}\Vert (v-u)(t_{2})\Vert_{L^{2}(\Omega^{*})}^{2}
\end{align*}
for any $0\leq t_{1}<t_{2}\leq T$ and any test function satisfying $v\in L_{w}^{1}(t_{1},t_{2}; \BV_{u_{0}}(\Omega))$ with $\partial_{t}v\in L^{2}(\Omega_{t_{1},t_{2}}^{*})$.

\subsection{The initial condition}

In this brief section we shall see that variational solutions as in Definition \ref{varsoldef} satisfy the initial condition
$u(0)=u_{0}$ on $\Omega^{*}$ in the usual $L^{2}$-sense. In this respect, a key role is played by the time-growth of $\Vert u(t)-u_{0}\Vert_{L^{2}(\Omega^{*})}^{2}$, which is at most linear for $t>0$.

\smallskip

\begin{lemma}Assume that $u$ is a variational solution -  in the sense of Definition \ref{varsoldef} - on $\Omega^{*}_T$, $T\in(0,\infty]$. Then, the initial condition $u(0)=u_{0}$ is fulfilled in the usual $L^{2}$-sense, meaning that
\begin{equation}\label{initial-l2-sense}
\lim_{t\searrow 0}\Vert u(t)-u_{0}\Vert_{L^{2}(\Omega^{*})}^{2}=0.
\end{equation}
\end{lemma}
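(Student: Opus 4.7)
The plan is to test the variational inequality against the time-independent extension $v(\cdot,s) \equiv u_0$ on the small cylinder $\Omega^*_t := \Omega^* \times (0,t)$, and exploit the fact that this choice makes the initial boundary term in (\ref{varineq}) vanish.

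First I would check admissibility of the constant choice $v \equiv u_0$. By the hypothesis (\ref{datum}), $u_0 \in L^2(\Omega^*) \cap \BV(\Omega^*)$. Since $v$ is constant in time, the map $s \mapsto u_0$ trivially belongs to $L^1_w(0,t;\BV_{u_0}(\Omega))$ (the weak measurability condition is automatic for constant maps, and the lateral condition $v(\cdot,s) = u_0$ on $\Omega^* \setminus \Omega$ holds tautologically), while $\partial_s v \equiv 0 \in L^2(\Omega^*_t)$ and $v(0) = u_0 \in L^2(\Omega^*)$.

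Next I invoke the localization argument of Section \ref{subsec:local}: for any $t \in (0,T]$, the variational solution $u$ on $\Omega^*_T$ is also a variational solution on $\Omega^*_t$ in the sense of Definition \ref{varsoldef}. The key subtlety here is that in the cutoff construction $\tilde v = \zeta_\vartheta v + (1-\zeta_\vartheta)[u]_h^{u_0}$, at $s = 0$ one has $[u]_h^{u_0}(0) = u_0$ identically in $h$, so that the boundary term $\frac{1}{2}\|\tilde v(0) - u_0\|_{L^2(\Omega^*)}^2$ in the variational inequality on $\Omega^*_T$ is zero and yields, in the limit $\vartheta,h \searrow 0$, exactly the form $\frac{1}{2}\|v(0) - u_0\|_{L^2(\Omega^*)}^2$ at the left endpoint of the subcylinder. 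Plugging the admissible test function $v \equiv u_0$ into the resulting inequality on $\Omega^*_t$, the time-derivative contribution vanishes, $\|Dv(s)\|(\Omega^*) = \|Du_0\|(\Omega^*)$, and $\|v(0) - u_0\|_{L^2(\Omega^*)}^2 = 0$, so
\begin{equation*}
\int_0^t \|Du(s)\|(\Omega^*)\ds + \frac{1}{2}\|u(t) - u_0\|_{L^2(\Omega^*)}^2 \leq t\,\|Du_0\|(\Omega^*).
\end{equation*}

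Dropping the non-negative first term on the left yields the at-most-linear growth
\begin{equation*}
\|u(t) - u_0\|_{L^2(\Omega^*)}^2 \leq 2t\,\|Du_0\|(\Omega^*)
\end{equation*}
for every $t \in (0,T]$, which is precisely the quantitative bound hinted at in the lemma's introduction. Since $\|Du_0\|(\Omega^*) < \infty$ by (\ref{datum}), letting $t \searrow 0$ gives (\ref{initial-l2-sense}). The only delicate step is the second one: one must be careful that the sub-cylinder variational inequality inherits the $u_0$-term at the initial time (rather than a $u(0)$-term, which we do not yet know equals $u_0$), and this relies crucially on the fact that the time mollification $[u]_h^{u_0}$ is pinned at $u_0$ at $s = 0$.
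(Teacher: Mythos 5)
Your proof is correct and follows essentially the same route as the paper: localize to the subcylinder $\Omega^*_\tau$ via Section \ref{subsec:local}, test \eqref{varineq} with the time-independent comparison map $v\equiv u_0$ to obtain $\int_0^\tau\|Du(t)\|(\Omega^*)\,\mathrm{d}t+\tfrac12\|u(\tau)-u_0\|_{L^2(\Omega^*)}^2\le\tau\|Du_0\|(\Omega^*)$, discard the nonnegative energy term and let $\tau\searrow0$. Your closing observation that the initial boundary term must survive in the form $\tfrac12\|v(0)-u_0\|_{L^2(\Omega^*)}^2$ (because $[u]_h^{u_0}(0)=u_0$), rather than as $\tfrac12\|(v-u)(0)\|_{L^2(\Omega^*)}^2$ which would make the estimate vacuous, is exactly the point the paper leaves implicit, so your argument is in essence the paper's proof with that subtlety spelled out.
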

\begin{proof} From the discussion of Section \ref{subsec:local} we know that
$u$ satisfies the variational inequality in any subcylinder $\Omega_{\tau}^{*}$ for $\tau\in(0,T)$.\\
To prove \eqref{initial-l2-sense}, we rewrite the variational inequality (\ref{varineq}) on $\Omstar_{\tau}$ with the time independent extension of $u_{0}$ to $\Omstar_{\tau}$, namely $v(t)\equiv u_{0}$, for $t\in(0, \tau]$. Note that since $\Vert Du_{0}\Vert(\Omega)<\infty$, $v$ is admissible in (\ref{varineq}). For any $\tau\in(0,T)$, we then have
\begin{equation*}
\int_{0}^{\tau}\Vert Du(t)\Vert(\Omega)\d t+\frac{1}{2}\Vert u(\tau)-u_{0}\Vert_{L^{2}(\Omega^{*})}^{2}\leq\tau\Vert Du_{0}\Vert(\Omega)<\infty.
\end{equation*}
Thus, the assertion easily follows by discarding the non-negative energy term at the left-hand side and by letting $\tau\searrow 0$ at the right hand side in the above estimate.

\end{proof}

\subsection{The time derivative}

In this subsection, we are going to show Theorem \ref{maintheo2}. To this end, we are going to follow the strategy in \cite{boegeleintv}. 

\smallskip

Let $t_1$, $t_2 \in \R$ with $0 \leq t_1 < t_2 \leq T$ and recall that by Section \ref{subsec:local} $u$ satisfies the variational inequality on the subcylinder $\Omstar_{t_1, t_2}$. Set $\tilde{u}(s) \coloneqq u(s + t_1)$ for $s \in (0, t_2 - t_1)$ and observe that, if at the initial value $t_1$ we have $u(t_1) \in \BV(\Omstar)$, then $\tilde{u}$ satisfies the variational inequality (\ref{varineq}) on $\Omstar_{t_2 - t_1}$ with $u(t_1)$ instead of $u_0$. This is of course possible as $u(t_1)\in\BV(\Omstar)$ for a.e. $t_1$ and therefore in particular for $t_1=0$.
Also, Lemma \ref{mlemma1} ensures that $v = [\tilde{u}]_h^{u(t_1)}$ is admissible as a comparison map in the variational inequality for $\tilde{u}$ with initial data
\begin{equation*}
v(0) = \tilde{u}(0) = u(t_1).
\end{equation*}
Now, we test (\ref{varineq}) on $\Omstar_{t_2 - t_1}$ with $v = [\tilde{u}]_h^{u(t_1)}$ and discard the negative term $$-\dfrac{1}{2}\|([\tilde{u}]_h^{u(t_1)} - \tilde{u})(t_2 - t_1)\|_{L^2(\Omstar)}^2$$ on the right-hand side. Using Lemma \ref{mlemma2} and (\ref{dermollification}), we obtain that 
\begin{align*}
& -\int_0^{t_2 - t_1}\int_{\Omstar}\partial_t[\tilde{u}]_h^{u(t_1)}\left([\tilde{u}]_h^{u(t_1)} - \tilde{u}\right)\d\mu\dt \\
& \mskip+100mu \leq \int_0^{t_2 - t_1}\left[\|D[\tilde{u}]_h^{u(t_1)}(t)\|(\Omstar) - \|D\tilde{u}(t)\|(\Omstar)\right]\dt \\
& \mskip+100mu\leq \int_0^{t_2 - t_1}\left[\left[\|D\tilde{u}(t)\|(\Omstar)\right]_h^{\|Du(t_1)\|(\Omstar)} - \|D\tilde{u}(t)\|(\Omstar)\right]\dt \\
& \mskip+100mu= -h\int_0^{t_2 - t_1}\partial_t\left[\|D\tilde{u}(t)\|(\Omstar)\right]_h^{\|Du(t_1)\|(\Omstar)}\dt \\
& \mskip+100mu= h\left(\|D\tilde{u}(0)\|(\Omstar) - \left[\|D\tilde{u}(t)\|(\Omstar)\right]_h^{\|D\tilde{u}(0)\|(\Omstar)}(t_2 - t_1)\right).
\end{align*}
Dividing by $h$ and using again (\ref{dermollification}), the above inequality becomes
\begin{align}\label{est-divide-h}
\begin{aligned}
& \int_0^{t_2 - t_1}\int_{\Omstar}|\partial_t[\tilde{u}]_h^{u(t_1)}|^2\d\mu\dt \\
& \mskip+50mu \leq \|D\tilde{u}(0)\|(\Omstar) - \left[\|D\tilde{u}(t)\|(\Omstar)\right]_h^{\|D\tilde{u}(0)\|(\Omstar)}(t_2 - t_1) \leq \|D\tilde{u}(0)\|(\Omstar).
\end{aligned}
\end{align}
Notice that \eqref{est-divide-h} does not depend on $h>0$, whence we infer the existence of the time derivative $\partial_t\tilde{u} \in L^2(\Omstar_{\tau})$ together with the quantitative estimate
\begin{align*}
\int_0^{t_2 - t_1}\int_{\Omstar}|\partial_t\tilde{u}|^2\d\mu\dt \leq \|Du(t_1)\|(\Omstar).
\end{align*}
Now, recall that $\displaystyle[\|D\tilde{u}(t)\|(\Omstar)]_h^{\|D\tilde{u}(0)\|(\Omstar)}(\tau) \to \|D\tilde{u}(\tau)\|(\Omstar)$ as $h \searrow 0$ for a.e. $\tau \in (0, T)$, so \eqref{est-divide-h} eventually yields
\begin{align*}
\int_0^{t_2 - t_1}\int_{\Omstar}|\partial_t\tilde{u}|^2\d\mu\dt \leq \|D\tilde{u}(0)\|(\Omstar) - \|D\tilde{u}(t_2 - t_1)\|(\Omstar).
\end{align*}
Rewriting this last estimate with $u$ instead of $\tilde{u}$, we find that
\begin{align}\label{time1}
\int_{t_1}^{t_2}\int_{\Omstar}|\partial_tu|^2\d\mu\dt \leq \|D\tilde{u}(t_1)\|(\Omstar) - \|D\tilde{u}(t_2)\|(\Omstar)
\end{align}
holds true for a.e. $0 \leq t_1 < t_2 \leq T$. In particular, for $T<\infty$, (\ref{time1}) is still valid with $t_1 = 0$ and $t_2 = T$; if $T = \infty$ instead, we let $t_2 \to \infty$ to obtain the first assertion of the theorem. Moreover, for $t_1, t_2 \in \R$ with $0 \leq t_1 < t_2 \leq T$ we have
\begin{align}\label{l2-estimate}\begin{split}
\|u(t_2) - u(t_1)\|_{L^2(\Omstar)}^2 &= \int_{\Omstar}\left|\int_{t_1}^{t_2}\partial_tu\dt\right|^2\d\mu \\
& \leq |t_2 - t_1|\int_{t_1}^{t_2}\int_{\Omstar}|\partial_tu|^2\d\mu\dt \\
& \leq |t_2 - t_1| \cdot \|Du_0\|(\Omstar).\end{split}
\end{align}
If we now set $t_1 = 0$ in \eqref{l2-estimate}, we find for any $t \in \R \cap (0, T]$ that
\begin{align*}
\int_{\Omstar}|u(t)|^2\d\mu & \leq 2\int_{\Omstar}|u_0|^2\d\mu + 2\int_{\Omstar}|u(t) - u_0|^2\d\mu \\
& \leq 2\int_{\Omstar}|u_0|^2\d\mu + 2t\cdot\|Du_0\|(\Omstar).
\end{align*}
Therefore, we obtain
\begin{align*}
u \in C^{0, \frac{1}{2}}([0, \tau]; L^2(\Omstar)) \text{ for any } \tau \in \R \cap (0, T].
\end{align*}
To conclude the proof, it remains to establish the estimate (\ref{energymain}). Now that we know $\partial_tu$ to be in $L^2(\Omstar_T)$, we can apply integration by parts to rewrite the minimality condition (\ref{varineq}) in the form
\begin{equation*}
\int_0^{\tau}\|Du(t)\|(\Omstar)\dt \leq \int_0^{\tau}\left[\int_{\Omstar}\partial_tu(v-u)\d\mu + \|Dv(t)\|(\Omstar)\right]\dt
\end{equation*}
for any $\tau \in \R \cap (0, T]$. Now, for $t_1, t_2 \in \R$ with $0 \leq t_1 < t_2 \leq \tau$ we define
\begin{align*}
\zeta_{t_1, t_2}(t) := \left\{\begin{array}{cl}
1 & \text{ if } t \in [0, t_1], \\ 
\dfrac{t_2 - t}{t_2 - t_1} & \text{ if } t \in (t_1, t_2), \\ 
0 & \text{ if } t \in [t_2, \tau].
\end{array}\right.
\end{align*}
Let $v = u + \zeta_{t_1, t_2}([u]_h^{u_0} - u)$ be a comparison function in the minimality condition on $\Omstar_{\tau}$. First of all, $v$ is indeed admissible, meaning that $v \in L^1_w(0, \tau; \BV_{u_0}(\Omstar))$; moreover, $\partial_tv \in L^2(\Omstar_{\tau})$ by the arguments in Section \ref{subsec:local} and $v(0) = [u]_h^{u_0} \in L^2(\Omstar)$. Combining in addition the convexity of the total variation and Lemma \ref{mlemma2}, we find 
\begin{align*}
\int_0^{t_2}\|Du(t)\|(\Omstar)\dt & \leq \int_0^{t_2}\int_{\Omstar}\zeta_{t_1, t_2}\partial_tu([u]_h^{u_0} - u)\d\mu\dt \\
& \mskip+25mu + \int_0^{t_2}\left[(1 - \zeta_{t_1, t_2})\|Du(t)\|(\Omstar)+\zeta_{t_1, t_2}[\|Du(t)\|(\Omstar)]_h^{\|Du(0)\|(\Omstar)}\right]\dt.
\end{align*}
Now, after a suitable rearrangement of the terms, we use (\ref{dermollification}) and integrate by parts to find 
\begin{align*}
0 & \leq \int_0^{t_2}\int_{\Omstar}\zeta_{t_1, t_2}\partial_tu([u]_h^{u_0} - u)\d\mu\dt \\
& \mskip+25mu + \int_0^{t_2}\zeta_{t_1, t_2}\left([\|Du(t)\|(\Omstar)]_h^{\|Du(0)\|(\Omstar)} - \|Du(t)\|(\Omstar)\right)\dt \\
& = -h\int_0^{t_2}\int_{\Omstar}\zeta_{t_1, t_2}\partial_tu\partial_t[u]_h^{u_0}\d\mu\dt \\
& \mskip+25mu - h\int_0^{t_2}\zeta_{t_1, t_2}\partial_t[\|Du(t)\|(\Omstar)]_h^{\|Du(0)\|(\Omstar)}\dt \\
& = -h\int_0^{t_2}\int_{\Omstar}\zeta_{t_1, t_2}\partial_tu\partial_t[u]_h^{u_0}\d\mu\dt \\
& \mskip+25mu + h\int_0^{t_2}\zeta_{t_1, t_2}'[\|Du(t)\|(\Omstar)]_h^{\|Du(0)\|(\Omstar)}\dt + h\|Du_0\|(\Omstar).
\end{align*}
At this point, we divide both sides by $h > 0$ and pass to the limit as $h \searrow 0$, getting 
\begin{align*}
\frac{1}{t_2 - t_1}\int_{t_1}^{t_2}\|Du(t)\|(\Omstar)\dt & \leq \|Du_0\|(\Omstar) - \int_0^{t_2}\int_{\Omstar}\zeta_{t_1, t_2}|\partial_tu|^2\d\mu\dt  \leq \|Du_0\|(\Omstar).
\end{align*}
This concludes the proof of Theorem \ref{maintheo2}.

\section{Uniqueness}

We are now going to discuss a Comparison Principle for variational solutions of the total variation flow. 


\begin{lemma}[Comparison Principle]\label{comparelem}
Let $u,\tilde{u}$ be variational solutions on $\Omstar_T$ in the sense of Definition \ref{varsoldef}, $T\in(0,\infty]$, with Cauchy-Dirichlet data $u_0$ and $\tilde{u}_0\in L^2(\Omstar) \cap \BV(\Omega^*)$, respectively. Assume $u_0\le\tilde{u}_0$ $\mu$-almost everywhere on $\Omega^*$. Then, there holds $u\le\tilde{u}$ $(\mu\otimes\Leb^1)$-almost everywhere on $\Omega_T$.
\end{lemma}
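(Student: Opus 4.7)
The natural strategy is the one used for parabolic comparison principles via the max/min truncation. Set
\begin{align*}
v := \min\{u,\tilde u\}, \qquad w := \max\{u,\tilde u\}, \qquad \psi := (u-\tilde u)_+ = u - v = w - \tilde u.
\end{align*}
The goal is to show $\psi \equiv 0$ on $\Omstar_T$. The plan is to test the variational inequality for $u$ with $v$, the variational inequality for $\tilde u$ with $w$, add the two, and exploit the cancellation provided by Lemma \ref{minmaxvar}.

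First I would verify that $v$ and $w$ are admissible comparison maps. The crucial point is the boundary condition: since $u_0 \le \tilde u_0$ on $\Omstar$ and $u(t)=u_0$, $\tilde u(t)=\tilde u_0$ a.e.\ on $\Omstar\setminus\Om$, one has $v(t)=u_0$ and $w(t)=\tilde u_0$ a.e.\ on $\Omstar\setminus\Om$. Lemma \ref{minmaxvar} gives BV-regularity at a.e.\ time slice together with the crucial inequality
\begin{align*}
\|Dv(t)\|(\Omstar) + \|Dw(t)\|(\Omstar) \le \|Du(t)\|(\Omstar) + \|D\tilde u(t)\|(\Omstar).
\end{align*}
Theorem \ref{maintheo2} yields $\partial_t u,\partial_t\tilde u\in L^2(\Omstar_T)$; the chain rule for truncations then provides $\partial_t v,\partial_t w\in L^2(\Omstar_T)$ with $\partial_t v = \partial_t u\,\mathbf 1_{\{u\le\tilde u\}} + \partial_t\tilde u\,\mathbf 1_{\{u>\tilde u\}}$, and analogously for $\partial_t w$. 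Weak measurability in $L^1_w(0,T;\BV_{u_0}(\Om))$ follows from Fubini applied to $t\mapsto \int_\Om v(t)\divergence(\mathfrak d)\d\mu$ for any $\mathfrak d\in\Der_b(\Om)$ (for which $\divergence(\mathfrak d)\in L^\infty$), since $v$ is Borel measurable on $\Omstar\times(0,T)$.

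Now I add the two variational inequalities on the subcylinder $\Omstar_\tau$ with $0<\tau\le T$, which is legitimate by the localization result of Subsection \ref{subsec:local}. The cancellation from Lemma \ref{minmaxvar} eliminates the total variation terms on both sides. For the time-derivative terms, a pointwise computation gives
\begin{align*}
\partial_t v\,(v-u) + \partial_t w\,(w-\tilde u) = (\partial_t u - \partial_t\tilde u)\,\psi = \tfrac12\partial_t(\psi^2),
\end{align*}
which integrates to $\tfrac12\|\psi(\tau)\|_{L^2(\Omstar)}^2 - \tfrac12\|\psi(0)\|_{L^2(\Omstar)}^2$. For the $L^2$-boundary terms, $(v-u)(\tau)=-\psi(\tau)$ and $(w-\tilde u)(\tau)=\psi(\tau)$ contribute $-\|\psi(\tau)\|_{L^2(\Omstar)}^2$, while at $t=0$ both $v(0)-u_0=\min\{u_0,\tilde u_0\}-u_0=0$ and $w(0)-\tilde u_0=\max\{u_0,\tilde u_0\}-\tilde u_0=0$. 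Since also $\psi(0)=(u_0-\tilde u_0)_+=0$, we arrive at
\begin{align*}
0 \le -\tfrac12\|\psi(\tau)\|_{L^2(\Omstar)}^2,
\end{align*}
which forces $\psi(\tau)=0$ in $L^2(\Omstar)$. As $\tau\in(0,T)$ was arbitrary and $\psi$ inherits $L^2$-continuity from $u,\tilde u\in C^0([0,T];L^2(\Omstar))$, we conclude $u\le\tilde u$ a.e.\ on $\Omstar\times(0,T)$, hence also on $\Om\times(0,T)$.

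I expect the main technical obstacle to be the verification that $v=\min\{u,\tilde u\}$ and $w=\max\{u,\tilde u\}$ are genuinely admissible in the variational inequality—in particular, the weak measurability in the sense of \eqref{pairing} and the identification of $\partial_t v,\partial_t w$ via a chain rule valid in this $L^2$-time-regularity setting. Everything else is, up to Lemma \ref{minmaxvar}, bookkeeping of the cancellations between the two variational inequalities.
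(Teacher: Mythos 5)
Your proposal is correct and follows essentially the same route as the paper: testing the two variational inequalities on $\Omstar_\tau$ with $v=\min\{u,\tilde u\}$ and $w=\max\{u,\tilde u\}$ (admissible thanks to Lemma \ref{minmaxvar}, Theorem \ref{maintheo2} and the localization of Section \ref{subsec:local}), adding them, and reducing the time-derivative and $L^2$-boundary terms to $-\tfrac12\|(u-\tilde u)_+(\tau)\|_{L^2(\Omstar)}^2\ge 0$. The only difference is cosmetic bookkeeping (you phrase the cancellation via $\psi=(u-\tilde u)_+$ and an exact identity where the paper keeps an inequality), so no changes are needed.
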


\begin{proof}
 Let $\tau\in\mathbb{R}\cap(0,T]$. By the localization on smaller cylinders performed in Section \ref{subsec:local},
 we consider $v\coloneqq\min\{u,\tilde{u}\}$ and $w\coloneqq\max\{u,\tilde{u}\}$ as comparison maps in the variational inequalities for $u$ and $\tilde{u}$, respectively, on the smaller space-time domain $\Omega_\tau$.
 
 Of course, as $u,\tilde{u}\in L^1_w(0,T;\BV(\Omega^*))$, we have that $v=\min\{u,\tilde{u}\}\in L^1(\Omega^*_T)$. Then, by an application of Fubini's Theorem we find that for any $\mathfrak{d}\in\Der_b(\X)$ with $\supp(\frd)\Subset\Omega$, the mapping
 \[
  t\mapsto\int_\Omega v(t)\div(\frd)\d\mu 
 \]
is measurable with respect to $t$. Moreover, since $v\in \BV(\Omega)$ for almost every $t\in(0,T)$ – being the minimum of two $\BV$ functions – an application of Lemma \ref{minmaxvar} yields
\[
 \int_0^T \|Dv(t)\|(\Omstar)\d t<\infty,
\]
which means $v\in L^1_w(0,T;\BV(\Omega^*))$; by analogous arguments, one also shows that $w=\max\{u,\tilde{u}\}\in L^1_w(0,T;\BV(\Omega^*))$.

Now, since $u$ and $\tilde{u}$ are variational solutions, Theorem \ref{maintheo2} ensures that $\partial_t u,\partial_t\tilde{u}\in L^2(\Omega^*_T)$, hence $\partial_t v,\partial_t w\in L^2(\Omega^*_T)$ as well. Then, we can add the related variational inequalities (see (\ref{varineq})) to find
\begin{align}\label{comp-est}
\begin{aligned}
  \int_0^\tau \left[\|Du(t)\|(\Omstar)+\|D\tilde{u}(t)\|(\Omstar)\right]\d t &  \le \int_0^\tau\left[\|Dv(t)\|(\Omstar)+\|Dw(t)\|(\Omstar)\right]\d t \\ 
  & \mskip+25mu + \int_0^\tau\int_{\Omega^*}\left[\partial_t v(v-u)+\partial_t w(w-\tilde{u})\right]\d\mu\d t \\ 
  & \mskip+25mu -\frac{1}{2}\|(v-u)(\tau)\|^2_{L^2(\Omega^*)}-\frac{1}{2}\|(w-\tilde{u})(\tau)\|^2_{L^2(\Omega^*)} \\ 
  & \coloneqq \textbf{(A)} + \textbf{(B)} + \textbf{(C)} + \textbf{(D)},
\end{aligned}
\end{align}
where we employed the facts that $v(0)=u_0$ and $w(0)=\tilde{u}_0$. The meaning of the abbreviations $\textbf{(A)}$-$\textbf{(D)}$ is obvious. Thus said, we pass to estimate the terms \textbf{(A)}-\textbf{(D)} at the rightmost side of \eqref{comp-est}.

For the treatment of \textbf{(A)}, we find by Lemma \ref{minmaxvar} that there holds 
\[
\int_0^\tau\left[\|Dv(t)\|(\Omstar)+\|Dw(t)\|(\Omstar)\right]\d t\le \int_0^\tau\left[\|Du(t)\|(\Omstar)+\|D\tilde{u}(t)\|(\Omstar)\right]\d t.
\]

In order to estimate \textbf{(B)}, first notice that on the set $\left\{(x,t)\in\Omega_\tau^*:\;u(x,t)\le\tilde{u}(x,t)\right\}$ there holds
\begin{equation*}
 \partial_t v(v-u)+\partial_t w(w-\tilde{u})=0.
\end{equation*}
On the complementary set, namely $\left\{(x,t)\in\Omega_\tau^*:\;u(x,t)>\tilde{u}(x,t)\right\}$, one has instead
\begin{equation*}
 \partial_t v(v-u)+\partial_t w(w-\tilde{u})=\partial_t\tilde{u}(\tilde{u}-u)+\partial_t u(u-\tilde{u})=\frac{1}{2}\partial_t\left\vert u-\tilde{u}\right\vert^2. 
\end{equation*}
So, let us now combine these identities to find
\begin{align*}
 \int_0^\tau\int_{\Omega^*}\left[\partial_t v(v-u)+\partial_t w(w-\tilde{u})\right]\d\mu\d t & =\frac{1}{2}\int_0^\tau\int_{\Omega^*}\partial_t(u-\tilde{u})^2_{+}\d\mu\d t \\ & \le\frac{1}{2}\int_{\Omstar}(u-\tilde{u})^2_{+}(\tau)\d\mu.
\end{align*}

By the definition of $v$ and $w$ as the minimum and maximum of $u$ and $\tilde{u}$, respectively, we find that 
\begin{align*}
\frac{1}{2}\|(v - u)(\tau)\|_{L^2(\Omstar)}^2 = \frac{1}{2}\int_{\Omstar}(u - \tilde{u})_+^2(\tau)\d\mu = \frac{1}{2}\|(w - \tilde{u})(\tau)\|_{L^2(\Omstar)}^2,
\end{align*}
hence we can rewrite \textbf{(C)} and \textbf{(D)}.

All in all, by combining \textbf{(A)}-\textbf{(D)} above we find that
\begin{align*}
 \int_{\Omega^*\times\{\tau\}}(u-\tilde{u})^2_{+}\d\mu\le0
\end{align*}
holds true, so the arbitrariness of $\tau\in\R\cap(0,T]$ yields $u\le\tilde{u}$ $(\mu\otimes\Leb^1)$-almost everywhere in $\Om_T$.
 
\end{proof}

\section{The existence proof}\label{sec-existence}

In this section we provide the proof of Theorem \ref{maintheo1}. Recall that $\Omega\Subset\Omega^{*}$ are two bounded, open subsets of $\X$ and that $u_{0}\in \BV(\Omega^{*})\cap L^{2}(\Omega^{*})$ fulfills (\ref{datum}).

For the reader's convenience, we will omit the set $\Omstar$ in the total variation throughout this section whenever it is obvious, i.e. any term similar to $\|Dv\|$ has to be interpreted as the respective term similar to $\|Dv\|(\Omstar)$.

\subsection{A sequence of minimizers to a variational functional on $\Omega_{T}^{*}$}\label{subsec:sequence}

Let $T\in(0,\infty)$. For $\varepsilon\in(0,1]$ we shall concentrate on variational integrals of the form
\begin{equation}\label{f-eps-functional}
\mathcal{F}_{\varepsilon}(v)\coloneqq\int_{0}^{T}e^{-\frac{t}{\varepsilon}}\left[\frac{1}{2}\int_{\Omega^{*}}| \partial_{t}v|^{2}\d\mu+\frac{1}{\varepsilon}\|Dv(t)\|\right]\d t.
\end{equation}
Together with these functionals, in order to address the existence problem associated to \eqref{f-eps-functional} we are going to consider first a certain function space where the minimization shall be realized, namely
\begin{align*}
\mathcal{K}:=\left\{ v\in L_{w}^{1}(0,T;\BV(\Omega^{*})):\partial_{t}v\in L^{2}(\Omega^{*}_{T})\right\}.
\end{align*} 
On $\mathcal{K}$ we define the norm
\begin{align*}
\| v\|_{\mathcal{K}}\coloneqq\int_{0}^{T}\| v(t)\|_{\BV(\Omega^{*})}\d t+\|\partial_{t} v\|_{L^{2}(\Omega^{*}_{T})}.
\end{align*}
We note that there holds
\begin{align*}
e^{-\frac{T}{\varepsilon}}\| v\|_{\mathcal{K}}\leq\int_{0}^{T}e^{-\frac{t}{\varepsilon}}\| v(t)\|_{\BV(\Omega^{*})}\d t+\left[\int_{0}^{T}\int_{\Omega^{*}}| \partial_{t}v|^{2}\d\mu\d t\right]^{\frac{1}{2}}\leq \| v\|_{\mathcal{K}}.
\end{align*}
Moreover, we consider the subclass
\begin{equation*}
 \mathcal{K}_{u_0}\coloneqq \left\{v\in\mathcal{K};\; v=u_0\:\text{a.e.\:on}\:(\Omstar\setminus\Omega)\times(0,T)\:\text{and}\:v(0)=u_0\right\}.
\end{equation*}
Observe that since $$\|\partial_{t} v\|_{L^{2}(\Omega^{*})}<\infty\quad \text{and}\quad v(0)=u_{0}\in L^{2}(\Omega^{*}),$$ then $v\in C^{0,\frac{1}{2}}([0,T]; L^{2}(\Omega^{*}))$ and therefore the initial condition $v(0)=u_{0}$ is satisfied in the strong $L^{2}$-sense. Moreover, the time independent extension $u_{0}$ from $\BV(\Omega^{*})\cap L^{2}(\Omega^{*})$ to $\Omega^{*}_{T}$, namely $v(t)=u_{0}$ for $t\in(0,T]$ defines an element in $\mathcal{K}_{u_{0}}$.\\
More precisely, there holds
\begin{align*}
\| u\|_{\mathcal{K}}&=\int_{0}^{T}\| v(t)\|_{\BV(\Omega^{*})}\d t+\| \partial_{t} v\|_{L^{2}(\Omega^{*}_{T})}\\
&=\int_{0}^{T}\| u_{0}\|_{\BV(\Omega^{*})}\d t
+0\\
&=T\| u_{0}\|_{\BV(\Omega^{*})}.
\end{align*}

To continue, we introduce then the subclass of $\mathcal{K}_{u_{0}}$ consisting of functions for which the energy $\mathcal{F}_{\varepsilon}$ is finite, namely
\begin{align*}
\mathcal{K}^{\varepsilon}_{u_{0}}:=\left\{ u\in\mathcal{K}_{u_{0}}:\mathcal{F}_{\varepsilon}(u)<\infty\right\}.
\end{align*}
It is immediate to see that $\K_{u_0}^\eps$ is non-empty, as the time-independent extension of $u_0$ to the whole of
$\Omega^{*}_{T}$ is in $\K_{u_0}$ with finite energy \eqref{f-eps-functional}.
Actually, we explicitly point out that, since $\| Du_{0}\|(\Omega^{*})<\infty$, for such extension we precisely have
\begin{align*}
\mathcal{F}_{\varepsilon}(v)&=\int_{0}^{T}e^{-\frac{t}{\varepsilon}}\frac{1}{\varepsilon}\|Dv(t)\|\d t+\int_{0}^{T}e^{-\frac{t}{\varepsilon}}\left[\frac{1}{2}\int_{\Omega^{*}}| \partial_{t}v|^{2}\right]\d t\\
&=\int_{0}^{T}e^{-\frac{t}{\varepsilon}}\frac{1}{\varepsilon}\|Du_{0}\|\d t\\
&=(1-e^{-\frac{T}{\varepsilon}})\| Du_{0}\|\leq\| Du_{0}\|<\infty.
\end{align*}
It is worth mentioning that the condition $\partial_{t}v\in L^{2}(\Omega^{*}_{T})$ makes it possible to establish an $L^{1}$-bound for $v$ in terms of $\partial_{t}v$ and $u_{0}$. Indeed, given $v\in \mathcal{K}^{\varepsilon}_{u_{0}}$ for any $t\in[0,T]$ we infer that
\begin{align*}
\| v(t)\|_{L^{1}(\Omega^{*})}&\leq \| v(t)-u_{0}\|_{L^{1}(\Omega^{*})}+\| u_{0}\|_{L^{1}(\Omega^{*})}\\
&=\int_{\Omega^{*}}\bigg|\int_{0}^{t}\partial_{\tau}v(\tau)\d \tau\bigg|\d\mu+\| u_{0}\|_{L^{1}(\Omega^{*})}\\
&\leq\int_{\Omega^{*}_{T}}|\partial_{\tau}v(\tau)|\d \tau\d\mu+\| u_{0}\|_{L^{1}(\Omega^{*})}\\
&\leq(T\mu(\Om))^{\frac{1}{2}}\| \partial_{t} v \|_{L^{2}(\Omega^{*}_{T})}+\| u_{0}\|_{L^{1}(\Omega^{*})}.
\end{align*}
As we integrate this inequality with respect to $t\in(0,T)$ we finally get
\begin{align}\label{ineqv}
\| v\|_{L^{1}(\Omega^{*}_{T})}\leq T\left[(T\mu(\Om))^{\frac{1}{2}}\| \partial_{t} v \|_{L^{2}(\Omega^{*}_{T})}+\| u_{0}\|_{L^{1}(\Omega^{*})}\right].
\end{align}

\smallskip

Let us now prove the existence of minimizers for $\mathcal{F}_{\varepsilon}$:

\begin{lemma}\label{exlem}
For any $\varepsilon\in (0,1]$, there is a unique minimizer $u_\eps$ for the variational energy $\mathcal{F}_{\varepsilon}$ in the subclass $\mathcal{K}^{\varepsilon}_{u_{0}}$.
\end{lemma}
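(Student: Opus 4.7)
The plan is to apply the Direct Method of the Calculus of Variations. First I would note that $\mathcal{K}^\varepsilon_{u_0}$ is non-empty (the time-independent extension of $u_0$ belongs to it with finite energy, as already observed) and that $\mathcal{F}_\varepsilon \geq 0$, so $m \coloneqq \inf_{\mathcal{K}^\varepsilon_{u_0}} \mathcal{F}_\varepsilon \in [0, \|Du_0\|)$. Pick a minimizing sequence $(v_j)_{j \in \N} \subset \mathcal{K}^\varepsilon_{u_0}$ with $\mathcal{F}_\varepsilon(v_j) \to m$. Using the uniform lower bound $e^{-T/\varepsilon}$ for the exponential weight on $[0, T]$, the energy control $\mathcal{F}_\varepsilon(v_j) \leq m + 1$ yields uniform bounds
\begin{align*}
\sup_{j \in \N} \|\partial_t v_j\|_{L^2(\Omstar_T)} < \infty \quad \text{and} \quad \sup_{j \in \N} \int_0^T \|Dv_j(t)\|(\Omstar)\dt < \infty,
\end{align*}
and through the estimate (\ref{ineqv}) also uniform bounds on $\|v_j\|_{L^1(\Omstar_T)}$ and hence on $\|v_j\|_{L^1(0,T;\BV(\Omstar))}$.

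Next I would extract a convergent subsequence. For the time derivative, the $L^2$-bound together with the reflexivity of $L^2(\Omstar_T)$ gives weak convergence $\partial_t v_j \weak \xi$ in $L^2(\Omstar_T)$ along a subsequence. For $v_j$ themselves, since $L^1_w(0,T;\BV(\Omstar))$ has no standard compactness theorem, I would invoke the compactness result of Simon (Lemma \ref{compactness} in the appendix) applied with the continuous embedding $\BV(\Omstar) \hookrightarrow L^1(\Omstar)$ and the compact embedding into, say, $W^{-1,1}$ or a suitable coarser space—the uniform bound on the BV-seminorm gives relative compactness of the slices and the $L^2$-bound on $\partial_t v_j$ gives the required equicontinuity in time. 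This produces a subsequence (not relabeled) with $v_j \to u$ in $L^1(\Omstar_T)$, and after passing to a further subsequence, $\mu$-a.e.\ convergence on $\Omstar_T$. Standard distributional arguments identify the weak limit: $\xi = \partial_t u$ in $L^2(\Omstar_T)$.

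Now I would verify admissibility: $u \in L^1_w(0,T;\BV(\Omstar))$ follows from Lemma \ref{uinweak} applied to $(v_j)$; the lateral boundary condition $u = u_0$ a.e.\ on $(\Omstar \setminus \Omega) \times (0,T)$ is inherited from pointwise a.e.\ convergence, since each $v_j$ satisfies it; and the initial condition $u(0) = u_0$ follows from the $C^{0,1/2}([0,T]; L^2(\Omstar))$-regularity induced by the $L^2$-bound on $\partial_t u$ combined with the uniform-in-$j$ H\"older estimate $\|v_j(t) - u_0\|_{L^2(\Omstar)} \leq t^{1/2}\|\partial_t v_j\|_{L^2(\Omstar_T)}$ used to pass to the limit at $t = 0$. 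Hence $u \in \mathcal{K}_{u_0}$. For lower semicontinuity of $\mathcal{F}_\varepsilon$, the term involving $|\partial_t v|^2$ is weakly lower semicontinuous in $L^2(\Omstar_T)$ (even against the bounded positive weight $e^{-t/\varepsilon}$), while for the total-variation part, the $L^1(\Omstar_T)$-convergence yields $v_j(t) \to u(t)$ in $L^1(\Omstar)$ for a.e.\ $t$ along a subsequence, so Proposition \ref{totvar-lsc} gives $\|Du(t)\|(\Omstar) \leq \liminf_j \|Dv_j(t)\|(\Omstar)$ pointwise, and Fatou's lemma with the weight $e^{-t/\varepsilon}/\varepsilon$ gives the integrated lower semicontinuity. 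Combining both, $\mathcal{F}_\varepsilon(u) \leq \liminf_j \mathcal{F}_\varepsilon(v_j) = m$, so $u \in \mathcal{K}_{u_0}^\varepsilon$ is a minimizer.

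For uniqueness, I would exploit strict convexity. The functional $\mathcal{F}_\varepsilon$ is convex on $\mathcal{K}_{u_0}^\varepsilon$ by Lemma \ref{totvar-sublin} and the strict convexity of $\xi \mapsto |\xi|^2$. If $u^{(1)}, u^{(2)}$ were two minimizers, then $\tfrac{1}{2}(u^{(1)} + u^{(2)}) \in \mathcal{K}_{u_0}^\varepsilon$ would satisfy $\mathcal{F}_\varepsilon(\tfrac{1}{2}(u^{(1)} + u^{(2)})) \leq m$, and strict convexity of the $|\partial_t v|^2$-term would force $\partial_t u^{(1)} = \partial_t u^{(2)}$ a.e.\ in $\Omstar_T$; combined with the common initial datum $u^{(1)}(0) = u^{(2)}(0) = u_0$ this gives $u^{(1)} = u^{(2)}$. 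The main obstacle I anticipate is the compactness step: verifying that Simon's theorem really applies to produce $L^1$-convergence requires the right choice of ambient space in which the BV-slices embed compactly, and care must be taken since $L^1_w(0,T;\BV(\Omstar))$ is only weakly measurable; once an $L^1(\Omstar_T)$-limit is produced, Lemma \ref{uinweak} handles the measurability issue cleanly.
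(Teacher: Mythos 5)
Your proposal is correct and follows essentially the same route as the paper: a direct-method argument with uniform bounds on $\partial_t v_j$ in $L^2(\Omstar_T)$ and on $\int_0^T\|Dv_j(t)\|(\Omstar)\dt$ (plus the $L^1$-bound from (\ref{ineqv})), compactness via Simon's theorem as packaged in Lemma \ref{compactness}, admissibility of the limit through Lemma \ref{uinweak}, a.e.\ convergence for the lateral boundary condition, the $\sqrt{|t-s|}$-H\"older estimate for the initial datum, lower semicontinuity via Proposition \ref{totvar-lsc}, Fatou and weak $L^2$-lower semicontinuity, and uniqueness from the strict convexity of the time-derivative term. The only cosmetic remark is that your aside about a compact embedding into ``$W^{-1,1}$ or a coarser space'' is unnecessary (and not meaningful in the metric setting): Lemma \ref{compactness} applies directly once the three uniform bounds you established are in place.
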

\begin{proof}
Let us apply (\ref{ineqv}) to $v\in \mathcal{K}^{\varepsilon}_{u_{0}}$. We find
\begin{align*}
\| v\|_{\mathcal{K}}&=\int_{0}^{T}\| v(t)\|_{\BV(\Omega^{*})}\d t+\| \partial_{t} v\|_{L^{2}(\Omega^{*}_{T})}\\
&=\int_{0}^{T}\| v(t)\|_{L^{1}(\Omega^{*})}\d t+\int_{0}^{T}\|  Dv(t)\|\d t+\| \partial_{t} v\|_{L^{2}(\Omega^{*}_{T})}\\
&\leq T\left[(T\mu(\Om))^{\frac{1}{2}}\| \partial_{t} v \|_{L^{2}(\Omega^{*}_{T})}+\| u_{0}\|_{L^{1}(\Omega^{*})}\right]+\int_{0}^{T}\|  Dv(t)\|\d t+\| \partial_{t} v\|_{L^{2}(\Omega^{*}_{T})}\\
&=\int_{0}^{T}\|  Dv(t)\|\d t+\left(1+T(T\mu(\Om))^{\frac{1}{2}}\right)\| \partial_{t} v \|_{L^{2}(\Omega^{*}_{T})}+T\| u_{0}\|_{L^{1}(\Omega^{*})}\\
&\leq\left(1+T(T\mu(\Om))^{\frac{1}{2}}\right)\left[\int_{0}^{T}\|Dv(t)\|\d t+\| \partial_{t} v \|_{L^{2}(\Omega^{*}_{T})}^{2}+1\right]+T\| u_{0}\|_{L^{1}(\Omega^{*})}.
\end{align*}
Here, we made use of the elementary inequality $a \leq a^2 + 1$ for all $a \geq 0$.

Now, since
\begin{enumerate}
\item[i)] $\displaystyle\dfrac{2e^{\frac{T}{\varepsilon}}}{\varepsilon}\geq 1\Rightarrow\dfrac{2e^{\frac{T}{\varepsilon}}}{\varepsilon}\int_{0}^{T}\|  Dv(t)\|\d t\geq \int_{0}^{T}\|Dv(t)\|\d t$,
\item[ii)] $2e^{\frac{T}{\varepsilon}}\geq 1$,
\item[iii)] $\displaystyle2e^{\frac{T}{\varepsilon}}\int_{0}^{T}e^{-\frac{t}{\varepsilon}}\left[\frac{1}{2}\int_{\Omega^{*}}|\partial_{t}v|^{2}\d\mu\right]\d t$  $\displaystyle\geq e^{\frac{T}{\varepsilon}}\int_{0}^{T}\int_{\Omega^{*}}e^{-\frac{t}{\varepsilon}}|\partial_{t}v|^{2}\d\mu\d t=\| \partial_{t} v \|_{L^{2}(\Omega^{*}_{T})}^{2}$,
\end{enumerate}
there follows 
\begin{align*}
& \left(1+T^{\frac{3}{2}}\mu(\Om)^{\frac{1}{2}}\right)\left[\int_{0}^{T}\|  Dv(t)\|\d t+\| \partial_{t} v \|_{L^{2}(\Omega^{*}_{T})}^{2}+1\right] \\
& \mskip+50mu\leq\left(1+T^{\frac{3}{2}}\mu(\Om)^{\frac{1}{2}}\right)2e^{\frac{T}{\varepsilon}}\left[\frac{1}{\varepsilon}\int_{0}^{T}\|  Dv(t)\|\d t + \int_{0}^{T}e^{-\frac{t}{\varepsilon}}\left[\frac{1}{2}\int_{\Omega^{*}}|\partial_{t}v|^{2}\d\mu\right]\d t+1\right]\\
& \mskip+50mu =2e^{\frac{T}{\varepsilon}}\left(1+T(T\mu(\Om))^{\frac{1}{2}}\right)\left[\mathcal{F}_{\varepsilon}(v)+1\right]
\end{align*}
and thus 
\begin{align*}
& \left(1+T^{\frac{3}{2}}\mu(\Om)^{\frac{1}{2}}\right)\left[\int_{0}^{T}\|  Dv(t)\|\d t+\| \partial_{t} v \|_{L^{2}(\Omega^{*}_{T})}^{2}+1\right]+T\| u_{0}\|_{L^{1}(\Omega^{*})} \\ 
& \mskip+125mu\leq 2e^{\frac{T}{\varepsilon}}\left(1+T(T\mu(\Om))^{\frac{1}{2}}\right)\left[\mathcal{F}_{\varepsilon}(v)+1\right]+T\| u_{0}\|_{L^{1}(\Omega^{*})}.
\end{align*}
Therefore, we obtain
\begin{equation}\label{norm-K-est}
\| u\|_{\mathcal{K}}\leq 2e^{\frac{T}{\varepsilon}}\left(1+T(T\mu(\Om))^{\frac{1}{2}}\right)\left[\mathcal{F}_{\varepsilon}(v)+1\right]+T\| u_{0}\|_{L^{1}(\Omega^{*})}.
\end{equation}
Now, we consider a minimizing sequence $(u_{j})\in\mathcal{K}^{\varepsilon}_{u_{0}}$, $j\in\mathbb{N}$, i.e.
\begin{align*}
\lim_{j\to\infty}\mathcal{F}_{\varepsilon}(u_{j})=\inf_{u\in\mathcal{K}^{\varepsilon}_{u_{0}}}\mathcal{F}_{\varepsilon}(u)\leq \mathcal{F}_{\varepsilon}(u_{0})=\| Du_{0}\|(1-e^{-\frac{T}{\varepsilon}}).
\end{align*}
To simplify the notation here we have called $u_{0}$ the time independent extension of $u_{0}$ to $\Omega_{T}^{*}$, which we already know belongs to $\mathcal{K}^{\varepsilon}_{u_{0}}$.

Now, suppose that the minimizing sequence satisfies - without loss of generality - $\mathcal{F}_{\varepsilon}(u_{j})\leq\| Du_{0}\|$. Then, \eqref{norm-K-est} yields
\begin{align*}
\| u_{j}\|_{\mathcal{K}}&\leq 2e^{\frac{T}{\varepsilon}}\left(1+T(T\mu(\Om))^{\frac{1}{2}}\right)\left[\mathcal{F}_{\varepsilon}(u_{j})+1\right]+T\| u_{0}\|_{L^{1}(\Omega^{*})}\\
&\leq 4e^{\frac{T}{\varepsilon}}\left(1+T(T\mu(\Om))^{\frac{1}{2}}\right)\left[\| Du_{0}\|+1\right]+T\| u_{0}\|_{L^{1}(\Omega^{*})},
\end{align*}
meaning that $(u_{j})_{j\in\mathbb{N}}$ is uniformly bounded with respect to $\|\cdot\|_{\mathcal{K}}$. From this we get the following estimate 
\begin{align*}
&\sup_{j \in \NN}\left[\int_{0}^{T}\| u_{j}(t)\|_{\BV(\Omega^{*})}\d t+\|\partial_{t} u_{j}\|_{L^{2}(\Omega^{*}_{T})}\right]\\
& \mskip+50mu \leq 4e^{\frac{T}{\varepsilon}}\left(1+T(T\mu(\Om))^{\frac{1}{2}}\right)\left[\| Du_{0}\|+1\right]+T\| u_{0}\|_{L^{1}(\Omega^{*})}
\end{align*}
to hold uniformly.

At this point, we invoke a compactness result by Simon \cite[Theorem 1]{Simon} with $p=1$ and $B=L^{1}(\Omega^{*})$ to infer that $(u_{j})_{j\in\mathbb{N}}$ is relatively compact in $L^{1}(\Omega^{*}_{T})$; see Lemma  \ref{compactness} for the exact statement. This allows to infer the existence of a subsequence - still denoted by $(u_{j})_{j\in\mathbb{N}}$ -  and of a measurable function $u:\Omega^{*}_{T}\rightarrow\mathbb{R}$, such that
\begin{align*}
\left\{\begin{array}{cl}
u_j \longrightarrow u & \text{strongly in } L^1(\Omstar_T), \\ 
u_j \longrightarrow u & \text{a.e. in } \Omstar_T, \\ 
u_j \longweak u & \text{weakly in } L^2(\Omstar_T), \\ 
\partial_tu_j \longweak \partial_tu & \text{weakly in } L^2(\Omstar_T).
\end{array}\right.
\end{align*}
Then, an application of Lemma \ref{uinweak} to $(u_{j})_{j\in\mathbb{N}}$ yields that $u\in L_{w}^{1}(0,T;\BV(\Omega^{*}))$. We now continue by exploiting the pointwise a.e. convergence to find
\begin{align*}
| u(x,t)-u_{0}(x)|\leq| u(x,t)-u_{j}(x,t)|+| u_{j}(x,t)-u_{0}(x)|\longrightarrow 0\ \ \ \ \textrm{as }j\rightarrow \infty
\end{align*}
for a.e. $(x,t)\in(\Omega^{*}\setminus\Omega)\times(0,T)$. Here we used the fact that the second term on the right-hand side is identically zero. So, $u(x,t)=u_{0}(x)$ for a.e. $(x,t)\in(\Omega^{*}\setminus\Omega)\times(0,T)$. Next, we observe that for any $0\leq s<t\leq T$ there holds
\begin{align*}
| u_{j}(t)-u_{j}(s)|&=\bigg|\int_{s}^{t}\partial_{\tau}u_{j}(\tau)\d\tau\bigg|\leq\int_{s}^{t}|\partial_{\tau}u_{j}(\tau)|\d\tau\\
&\leq |t-s|^{\frac{1}{2}}\left(\int_{s}^{t}|\partial_{\tau}u_{j}(\tau)|^{2}\d\tau\right)^{\frac{1}{2}}\\
&\leq| t-s|^{\frac{1}{2}}\left(\int_{0}^{T}|\partial_{\tau}u_{j}(\tau)|^{2}\d\tau\right)^{\frac{1}{2}}.
\end{align*}
By squaring both sides and integrating over $\Omstar$ with respect to the measure $\mu$, we arrive at
\begin{align*}
\| u_{j}(t)-u_{j}(s)\|_{L^{2}(\Omega^{*})}\leq\sqrt{| t-s|}\|\partial_{t}u_{j}\|_{L^{2}(\Omega_{T}^{*})}.
\end{align*}
Now, since there holds
\begin{align*}
\|\partial_{t}u_{j}\|_{L^{2}(\Omega_{T}^{*})}&\leq\|\partial_{t}u_{j}\|_{L^{2}(\Omega_{T}^{*})}^{2}+1\\
&\leq 2e^{\frac{T}{\varepsilon}}\left[\mathcal{F}_{\varepsilon}(u_{j})+1\right]\\
&\leq 2e^{\frac{T}{\varepsilon}}\left[\| Du_{0}\|+1\right],
\end{align*}
we obtain
\begin{align}\label{sqrt-estimate}\begin{split}
\| u_{j}(t)-u_{j}(s)\|_{L^{2}(\Omega^{*})}&\leq\sqrt{| t-s|}\|\partial_{t}u_{j}\|_{L^{2}(\Omega_{T}^{*})} \\
&\leq2e^{\frac{T}{\varepsilon}}\left[\| Du_{0}\|+1\right]\sqrt{| t-s|}.\end{split}
\end{align}
By the weak convergence $u_{j} \weak u$ in $L^{2}(\Omega_{T}^{*})$ and the fact that $u_{j}(0)=u_{0}$, rewriting \eqref{sqrt-estimate} with $s=0$ gives
\begin{align*}
\frac{1}{h}\int_{0}^{h}\| u(t)-u_{0}\|^{2}_{L^{2}(\Omega^{*})}\d t&\leq\liminf_{j\rightarrow\infty}\frac{1}{h}\int_{0}^{h}\| u_{j}(t)-u_{0}\|^{2}_{L^{2}(\Omega^{*})}\d t\\
&\leq he^{\frac{T}{\varepsilon}}\left[\| Du_{0}\|+1\right].
\end{align*}
In other words, 
\begin{align*}
\lim\limits_{h\searrow 0}\displaystyle\frac{1}{h}\int_{0}^{h}\| u(t)-u_{0}\|_{L^{2}(\Omega^{*})}^2\d t=0,
\end{align*}
which is the same as $u(0)=u_{0}$ in the usual $L^2$-sense; therefore, the initial condition is preserved in the limit. Let us now exploit the lower-semicontinuity of the total variation with respect to the $L^{1}$-convergence
and of the $L^{2}$-norm with respect to weak convergence; combining this with Fatou's Lemma yields 
\begin{align*}
&\int_{0}^{T}e^{-\frac{t}{\varepsilon}}\left[\int_{\Omega^{*}}\frac{1}{2}|\partial_{t}u|^{2}\d\mu+\frac{1}{\varepsilon}\| Du(t)\|\right]\d  t\\
& \mskip+150mu \leq \liminf_{j\rightarrow\infty}\int_{0}^{T}\int_{\Omega^{*}}e^{-\frac{t}{\varepsilon}}\frac{1}{2}|\partial_{t} u_{j}|^{2}\d\mu\d  t\\
& \mskip+170mu +\int_{0}^{T}e^{-\frac{t}{\varepsilon}}\frac{1}{\varepsilon}\left[\liminf_{j\rightarrow\infty}\| Du_{j}(t)\|\right]\d  t\\
& \mskip+150mu \leq\liminf_{j\rightarrow\infty}\int_{0}^{T}\int_{\Omega^{*}}e^{-\frac{t}{\varepsilon}}\frac{1}{2}|\partial_{t} u_{j}|^{2}\d\mu\d  t \\
& \mskip+170mu + \liminf_{j\rightarrow\infty}\int_{0}^{T}e^{-\frac{t}{\varepsilon}}\frac{1}{\varepsilon}\| Du_{j}(t)\|\d  t\\
& \mskip+150mu \leq\liminf_{j\rightarrow\infty}\int_{0}^{T}e^{-\frac{t}{\varepsilon}}\left[\int_{\Omega^{*}}\frac{1}{2}|\partial_{t} u_{j}|^{2}\d\mu+\frac{1}{\varepsilon}\| Du_{j}(t)\|\right]\d  t\\
& \mskip+150mu =\lim_{j\rightarrow\infty}\mathcal{F}_{\varepsilon}(u_{j})
\end{align*}
In other words, $u\in\K_{u_0}^{\eps}$ minimizes $\mathcal{F}_{\varepsilon}$. Lastly, uniqueness is a consequence of the fact that the time-derivative term in $\mathcal{F}_{\varepsilon}$ entails its strict convexity.

\end{proof}

\subsection{Rewriting the minimality condition}\label{subsec:rewrite}

Let $\varepsilon \in (0, 1]$ and denote by $u_{\varepsilon} \in \K_{u_0}^{\varepsilon}$ the unique minimizer of the variational energy $\mathcal{F}_{\varepsilon}$ in $\K_{u_0}^{\varepsilon}$. Assume $\varphi \in L^1_w(0, T; \BV_0(\Om))$ is such that $\partial_t\varphi \in L^2(\Omstar_T)$, $\varphi(0) \in L^2(\Omstar)$, and
\begin{align}\label{phifinite}
\int_0^T\|D(u_{\varepsilon} + \varphi)(t)\|\dt < \infty.
\end{align}
Given a Lipschitz function $\zeta:(0,T)\to[0,1]$, $\delta \in (0, e^{-\frac{T}{\varepsilon}}]$ and $(x, t) \in \Omstar_T$ we set
\begin{align*}
\sigma(t) \coloneqq \delta e^{\frac{t}{\varepsilon}}\zeta(t) 
\end{align*}
and
\begin{align*}
v_{\varepsilon, \delta}(x, t) \coloneqq u_{\varepsilon}(x, t) + \sigma(t)\varphi(x, t) = u_{\varepsilon}(x, t) + \delta e^{\frac{t}{\varepsilon}}\zeta(t)\varphi(x, t).
\end{align*}
We assume that either $\zeta(0) = 0$ or $\varphi(0) = 0$. Rearranging terms, we find that 
\begin{align}\label{sigmaconvcomb}
v_{\varepsilon, \delta}(x, t) = (1 - \sigma(t))u_{\varepsilon}(x, t) + \sigma(t)(u_{\varepsilon}(x, t) + \varphi(x, t)).
\end{align}
Thus said, by analogous localization arguments as in Section \ref{subsec:local} we can claim that  $v_{\varepsilon, \delta} \in$ $L^1_w(0, T; \BV_{u_0}(\Om))$. Moreover, the bound $\mathcal{F}_{\eps}(v_{\eps,\delta})<\infty$ is a consequence of the convexity of the total variation, since by (\ref{sigmaconvcomb})(note that $0 \leq \sigma(t) \leq 1$), $v_{\varepsilon, \delta}$ is a convex combination of $u_{\varepsilon}$ and $u_{\varepsilon} + \varphi$ on fixed time slices $t \in [0, T]$. More precisely, there holds the estimate
\begin{align*}
\int_0^Te^{-\frac{t}{\varepsilon}}\|Dv_{\varepsilon, \delta}(t)\|\dt &\leq \int_0^Te^{-\frac{t}{\varepsilon}}\Big[(1-\sigma(t))\|Du_{\varepsilon}(t)\|  + \sigma(t)\|D(u_{\varepsilon} + \varphi)(t)\|\Big]\dt \\
& \leq \int_0^Te^{-\frac{t}{\varepsilon}}\|Du_{\varepsilon}(t)\|\dt  + \int_0^T\|D(u_{\varepsilon} + \varphi)(t)\|\dt  < \infty. 
\end{align*}
It is then easy to see that $\partial_tv_{\varepsilon, \delta} \in L^2(\Omstar_T)$ and that the boundary and initial conditions are realized, since either $\zeta(0) = 0$ or $\varphi(0) = 0$ by assumption. Therefore, $v_{\varepsilon, \delta} \in K_{u_0}^{\varepsilon}$ and from the minimality of $u_{\varepsilon}$ we conclude that
\begin{align*}
\mathcal{F}_{\varepsilon}(u_{\varepsilon}) \leq \mathcal{F}_{\varepsilon}(v_{\varepsilon, \delta}) < \infty.
\end{align*}
Let us now rewrite the minimality condition with $v_{\varepsilon, \delta}$; applying further the convexity of the total variation, this becomes 
\begin{align*}
0 & \leq \int_0^T e^{-\frac{t}{\varepsilon}}\int_{\Omstar}\frac{1}{2}\left(|\partial_tu_{\varepsilon} + \delta\partial_t(e^{\frac{t}{\varepsilon}}\zeta\varphi)|^2 - |\partial_tu_{\varepsilon}|^2\right)\d\mu\dt \\
& \mskip+20mu + \int_0^Te^{-\frac{t}{\varepsilon}}\frac{1}{\varepsilon}\left(\|D(u_{\varepsilon}(t) + \delta e^{\frac{t}{\varepsilon}}\zeta(t)\varphi(t))\| - \|Du_{\varepsilon}(t)\|\right)\dt \\
& \leq \int_0^Te^{-\frac{t}{\varepsilon}}\int_{\Omstar}\left(\frac{1}{2}\delta^2|\partial_t(e^{\frac{t}{\varepsilon}}\zeta\varphi)|^2 + \delta\partial_tu_{\varepsilon}\partial_t(e^{\frac{t}{\varepsilon}}\zeta\varphi)\right)\d\mu \dt \\
& \mskip+20mu + \int_0^T\frac{\delta}{\varepsilon}\zeta(t)\Big(\|D(u_{\varepsilon} + \varphi)(t)\| - \|Du_{\varepsilon}(t)\|\Big)\dt.
\end{align*}
We multiply the preceding inequality by $\dfrac{\varepsilon}{\delta}$ and then let $\delta \searrow 0$. This yields 
\begin{align}\label{div-eps-delta-est}\begin{split}
0 & \leq \int_0^T e^{-\frac{t}{\varepsilon}}\int_{\Omstar}\varepsilon\partial_tu_{\varepsilon}\partial_t(e^{\frac{t}{\varepsilon}}\zeta\varphi)\d\mu\dt  + \int_0^T\zeta\Big(\|D(u_{\varepsilon} + \varphi)(t)\| - \|Du_{\varepsilon}(t)\|\Big)\dt \\
& = \int_0^T\zeta(t)\left[\int_{\Omstar}\partial_tu_{\varepsilon}\varphi\d\mu + \|D(u_{\varepsilon} + \varphi)(t)\| - \|Du_{\varepsilon}(t)\|\right]\dt \\
& \mskip+20mu + \varepsilon\int_0^T\int_{\Omstar}\Big[\zeta'\partial_tu_{\varepsilon}\varphi + \zeta\partial_tu_{\varepsilon}\partial_t\varphi\Big]\d\mu\dt.\end{split}
\end{align}
Notice that we can rewrite \eqref{div-eps-delta-est} as follows: 
\begin{align}\label{minrewritten}
\begin{aligned}
\int_0^T\zeta(t)\|Du_{\varepsilon}(t)\|\dt & \leq \int_0^T\zeta(t)\|D(u_{\varepsilon} + \varphi)(t)\|\dt \\
& \mskip+20mu + \int_0^T\int_{\Omstar}\zeta\partial_tu_{\varepsilon}\varphi\d\mu\dt \\
& \mskip+20mu + \varepsilon\int_0^T\int_{\Omstar}\Big[\zeta'\partial_tu_{\varepsilon}\varphi + \zeta\partial_tu_{\varepsilon}\partial_t\varphi\Big]\d\mu\dt,
\end{aligned}
\end{align}
which holds for any Lipschitz map $\zeta: (0, T) \to [0, 1]$ and any test function $\varphi \in L^1_w(0, T; \BV_0(\Om))$ with $\partial_t\varphi \in L^2(\Omstar_T)$, satisfying (\ref{phifinite}), and such that either $\zeta(0) = 0$ and $\varphi(0) \in L^2(\Omstar)$ or $\varphi(0) = 0$.

\subsection{Energy bounds}\label{subsec-E-bound}

This section will be devoted to the quest for uniform energy bounds to be satisfied by $\Feps$-minimizers $u_{\varepsilon} \in \mathcal{K}_{u_0}^{\varepsilon}$. Such bounds will then be the starting point to determine a converging subsequence as we pass to the limit as $\varepsilon \searrow 0$. To this aim, take a Lipschitz map $\zeta: (0, T) \to [0, 1]$ and define $[u_{\varepsilon}]_h^{u_0}$ according to (\ref{mollification}). First of all, Lemma \ref{mlemma2} implies $[u_{\varepsilon}]_h^{u_0} \in L^1_w(0, T; \BV(\Omstar))$; moreover, $[u_{\varepsilon}]_h^{u_0}(0) = u_0$ and therefore $\partial_t[u_{\varepsilon}]_h^{u_0} = \frac{1}{h}(u_0 - [u_{\varepsilon}]_h^{u_0}(0)) = 0$. Again by Lemma \ref{mlemma2} we infer
\begin{align*}
\int_0^T\zeta(t)\|D(u_{\varepsilon} - h\partial_t[u_{\varepsilon}]_h^{u_0})(t)\|\dt = \int_0^T\zeta(t)\|D[u_{\varepsilon}]_h^{u_0}(t)\|\dt
\end{align*}
for every $\zeta: (0, T) \to [0, 1]$ that is Lipschitz. This entitles us to set $\varphi \coloneqq -h\partial_t[u_{\varepsilon}]_h^{u_0}$ in (\ref{minrewritten}); observe that $\varphi$ satisfies all the requirements in (\ref{minrewritten}). Thus,
\begin{align}\label{5.8-mod}\begin{split}
 h\int_0^T\int_{\Omstar}\big[(\zeta + \varepsilon\zeta')\partial_tu_{\varepsilon}\partial_t[u_{\varepsilon}]_h^{u_0} &+ \varepsilon\,\zeta\,\partial_tu_{\varepsilon}\partial_{tt}[u_{\varepsilon}]_h^{u_0}\big]\d\mu\dt \\
&  \leq \int_0^T\zeta(t)\big[\|D[u_{\varepsilon}]_h^{u_0}(t)\| - \|Du_{\varepsilon}(t)\|\big]\dt \\
& \leq \int_0^T\zeta(t)\left[\left[\|Du_{\varepsilon}(t)\|\right]_h^{\|Du_0\|} - \|Du_{\varepsilon}(t)\|\right]\dt \\
&= -h\int_0^T\zeta(t)\partial_t\left[\|Du_{\varepsilon}(t)\|\right]_h^{\|Du_0\|}\dt.\end{split}
\end{align}
Above, we applied once more Lemma \ref{mlemma2}. Now, we divide both sides in \eqref{5.8-mod} by $h$ and manipulate the left-hand side to estimate the quantity $\partial_tu_{\varepsilon}\partial_{tt}[u_{\varepsilon}]_h^{u_0}$ as follows:
\begin{align*}
\partial_tu_{\varepsilon}\partial_{tt}[u_{\varepsilon}]_h^{u_0} &= \partial_t[u_{\varepsilon}]_h^{u_0}\partial_{\tt}[u_{\varepsilon}]_h^{u_0} + (\partial_tu_{\varepsilon} - \partial_t[u_{\varepsilon}]_h^{u_0})\partial_{tt}[u_{\varepsilon}]_h^{u_0} \\
& = \frac{1}{2}\partial_t|\partial_t[u_{\varepsilon}]_h^{u_0}|^2 + \frac{1}{h}|\partial_t[u_{\varepsilon}]_h^{u_0} - \partial_tu_{\varepsilon}|^2 \\
& \geq \frac{1}{2}\partial_t|\partial_t[u_{\varepsilon}]_h^{u_0}|^2.
\end{align*}
After substituting the above estimate in \eqref{5.8-mod}, we obtain
\begin{align}\label{twoways}
\begin{aligned}
\int_0^T\int_{\Omstar}\bigg[(\zeta + \varepsilon\zeta')\partial_tu_{\varepsilon}\partial_t[u_{\varepsilon}]_h^{u_0} & + \frac{\varepsilon}{2}\zeta\partial_t|\partial_t[u_{\varepsilon}]_h^{u_0}|^2\bigg]\d\mu\dt \\
&  \leq -\int_0^T\zeta(t)\partial_t[\|Du_{\varepsilon}(t)\|]_h^{\|Du_0\|}\dt.
\end{aligned}
\end{align}
We shall now employ \eqref{twoways}. First of all, let $\zeta \equiv 1$. Then 
\begin{align}\label{twoways-first}
\begin{aligned}
\int_0^T\int_{\Omstar}\partial_tu_{\varepsilon}\partial_t[u_{\varepsilon}]_h^{u_0}\dt & \leq -\int_0^T\partial_t[\|Du_{\varepsilon}(t)\|]_h^{\|Du_0\|}\dt \\
& \mskip+20mu - \frac{\varepsilon}{2}\int_0^T\int_{\Omstar}\partial_t|\partial_t[u_{\varepsilon}]_h^{u_0}|^2\d\mu\dt \\
& \leq \|Du_0\|.
\end{aligned}
\end{align}
In \eqref{twoways-first} we used the facts that
\begin{align*}
 \left[\|Du_{\varepsilon}(t)\|\right]_h^{\|Du_0\|}(0) & = \|Du_0\|, \\
 \partial_t[u_{\varepsilon}]_h^{u_0}(0) & = 0, \\
 \left[\|Du_{\varepsilon}(t)\|\right]_h^{\|Du_0\|}(T) & \geq 0,\\
 \partial_t[u_{\varepsilon}]_h^{u_0}|^2(T) & \geq 0.
\end{align*}

By passing to the limit as $h \searrow 0$ in \eqref{twoways-first} and taking into account that $\partial_t[u_{\varepsilon}]_h^{u_0} \to \partial_tu_{\varepsilon}$ in $L^2(\Omstar)$ as $h \searrow 0$ since $\partial_tu_{\varepsilon} \in L^2(\Omstar_T)$ by Lemma \ref{mlemma1}, we arrive at a uniform bound on the time derivative of $u_{\varepsilon}$:
\begin{align}\label{uniformtimeeps}
\int_0^T\int_{\Omstar}|\partial_tu_{\varepsilon}|^2\d\mu\dt \leq \|Du_0\|.
\end{align}
In a similar fashion to Section \ref{subsec:sequence}, this entails
\begin{align}\label{L2boundeps}
\begin{aligned}
\|u_{\varepsilon}\|_{L^2(\Omstar_T)}^2 & \leq T^2\|\partial_tu_{\varepsilon}\|_{L^2(\Omstar_T)}^2 + 2T\|u_0\|_{L^2(\Omstar)}^2 \\
& \leq T^2\|Du_0\| + 2T\|u_0\|_{L^2(\Omstar)}^2.
\end{aligned}
\end{align}
Eventually, \eqref{uniformtimeeps} yields, for any $0 \leq s < t \leq T$ 
\begin{align}\label{hoeldereps}
\begin{aligned}
\|u_{\varepsilon}(t) - u_{\varepsilon}(s)\|_{L^2(\Omstar)} & \leq \|\partial_tu_{\varepsilon}\|_{L^2(\Omstar_T)}\sqrt{|t-s|} \\
& \leq \sqrt{\|Du_0\|}\sqrt{|t - s|}.
\end{aligned}
\end{align}
Notice that (\ref{L2boundeps}) and (\ref{hoeldereps}) imply the uniform boundedness of the family of $\Feps$-minimizers $(u_{\varepsilon})_{\varepsilon > 0}$  both in $L^2(\Omstar_T)$ and $C^{0, \frac{1}{2}}([0, T]; L^2(\Omstar))$.

Let us now start again from \eqref{twoways}. For $0 \leq t_1 < t_2 \leq T$ we choose $\zeta = \zeta_{t_1, t_2}$, where
\begin{align*}
\zeta_{t_1, t_2} := \left\{\begin{array}{cl}
1, & \text{ if } t \in [0, t_1], \\ 
\dfrac{t_2 - t}{t_2 - t_1}, & \text{ if } t \in (t_1, t_2), \\ 
0, & \text{ it } t \in [t_2, T].
\end{array}\right. 
\end{align*}
With this choice of $\zeta$, we reformulate (\ref{twoways}) by integrating by parts at both sides. In doing so, we take into account that the left-hand side features no boundary terms, while the right-hand side will contain the quantity $\|Du_0\|$ as a boundary contribution. Thus, 
\begin{align*}
 \int_0^T\int_{\Omstar}\zeta_{t_1, t_2}(t)\partial_tu_{\varepsilon}\partial_t[u_{\varepsilon}]_h\d\mu\dt & \leq \|Du_0\| + \int_0^T\zeta_{t_1, t_2}'\bigg[[Du_{\varepsilon}(t)\|]_h^{\|Du_0\|} \\
 & \mskip+20mu + \int_{\Omstar}\left[\frac{\varepsilon}{2}|\partial_t[u_{\varepsilon}]_h^{u_0}|^2 - \varepsilon\partial_tu_{\varepsilon}\partial_t[u_{\varepsilon}]_h^{u_0}\right]\d\mu\bigg]\dt.
\end{align*}
We now let $h \searrow 0$ to find that 
\begin{align*}
& \int_0^T\int_{\Omstar}\zeta_{t_1, t_2}(t)|\partial_tu_{\varepsilon}|^2\d\mu\dt \leq \|Du_0\| + \int_0^T\zeta_{t_1, t_2}'\left[\|Du_{\varepsilon}(t)\| - \frac{\varepsilon}{2}\int_{\Omstar}|\partial_tu_{\varepsilon}|^2\d\mu\right]\dt.
\end{align*}
Since $|\partial_tu_{\varepsilon}|^2 \geq 0$, the last estimate brings us to
\begin{align}\label{L1BVboundwitheps}
\begin{aligned}
\int_{t_1}^{t_2}\|Du_{\varepsilon}(t)\|\dt & \leq (t_2 - t_1)\|Du_0\| + \frac{\varepsilon}{2}\int_{t_1}^{t_2}\int_{\Omstar}|\partial_tu_{\varepsilon}|^2\d\mu\dt \\
& \leq \left(t_2 - t_1 + \frac{\varepsilon}{2}\right)\|Du_0\|
\end{aligned}
\end{align}
for any $0 \leq t_1 < t_2 \leq T$. Here, we made use of \eqref{uniformtimeeps}.

\subsection{Passage to the limit}\label{subsec-limit}

Now that we have found suitable energy bounds from Section \ref{subsec-E-bound}, we shall pass to the limit as $\varepsilon \searrow 0$ in the sequence of $\mathcal{F}_{\varepsilon}$-minimizers $u_{\varepsilon}$ on $\Om_T$; this will result in the proof of Theorem \ref{maintheo1}. We notice that (\ref{uniformtimeeps}), (\ref{L2boundeps}) and (\ref{L1BVboundwitheps}) entail the boundedness of the family $(u_{\varepsilon})_{\varepsilon > 0}$ of $\mathcal{F}_{\varepsilon}$-minimizing functions in $L^2(\Omstar)$; correspondingly, the time derivatives $\partial_tu_{\varepsilon}$ are bounded in $L^2(\Omstar)$ and the total variations $t \mapsto \|Du_{\varepsilon}(t)\|$ are bounded in $L^1(0, T)$. We also observe that such behaviours are all uniform with respect to $\varepsilon \in (0, 1]$. Thus, we can apply  \cite[Theorem 1]{Simon} - see also Lemma \ref{compactness} - to infer the existence of a subsequence $\varepsilon_j \searrow 0$ - which we shall keep not relabelled as $\varepsilon$ - and also of a measurable function $u: \Omstar_T \to \R$ satisfying
\begin{align}\label{convergence4}
\begin{aligned}
\left\{\begin{array}{cl}
u_{\varepsilon} \longrightarrow u & \text{ strongly in } L^2(\Omstar_T), \\ 
u_{\varepsilon} \longrightarrow u & \text{ a.e. on } \Omstar_T, \\ 
u_{\varepsilon} \longweak u & \text{ weakly in } L^2(\Omstar_T), \\ 
\partial_tu_{\varepsilon} \longweak \partial_tu & \text{ weakly in } L^2(\Omstar_T).
\end{array} \right.
\end{aligned}
\end{align}
First of all, Lemma \ref{uinweak} ensures that $u \in L^1_w(0, T; \BV(\Omstar))$; taking into account the lower semicontinuity with respect to weak $L^2$-convergence and (\ref{uniformtimeeps}), there holds
\begin{align}\label{lowersemicont}
\int_0^{\infty}\int_{\Omstar}|\partial_tu|^2\d\mu\dt \leq \liminf_{\varepsilon \searrow 0}\int_0^{\infty}\int_{\Omstar}|\partial_tu_{\varepsilon}|^2\d\mu\dt \leq \|Du_0\|.
\end{align}
Also, the lower semicontinuity of the total variation on the time slices together with Fatou's lemma and \eqref{L1BVboundwitheps} allow us to estimate
\begin{align*}
\int_{t_1}^{t_2}\|Du(t)\|\dt & \leq \int_{t_1}^{t_2}\liminf_{\varepsilon \searrow 0}\|Du_{\varepsilon}(t)\|\dt \\
& \leq \liminf_{\varepsilon \searrow 0}\int_{t_1}^{t_2}\|Du_{\varepsilon}(t)\|\dt \\
& \leq (t_2 - t_1)\|Du_0\| < \infty.
\end{align*}
Let us rewrite this inequality with $t_1 = 0$ and $t_2 = T$. Then,
\begin{align}\label{finiteenergy}
0 \leq \int_0^T\|Du(t)\|\dt < \infty,
\end{align}
which proves  the finiteness of the left-hand side in \eqref{varineq}. Now combining (\ref{hoeldereps}) with $u_{\varepsilon}(0) = u_0$, we conclude - similarly to Lemma \ref{exlem} - that also $u(0) = u_0$ in the usual $L^2$-sense. Finally, the pointwise a.e. convergence from the second of \eqref{convergence4} yields
\begin{align*}
|u(x, t) - u_0(x)| \leq |u(x, t) - u_{\varepsilon}(x, t)| + |u_{\varepsilon}(x, t) - u_0(x)| \to 0 \text{ as } \varepsilon \searrow 0
\end{align*}
for $(\mu \otimes \L^1)$-a.e. $(x, t) \in (\Omstar \setminus \Om) \times (0, \infty)$. This means that $u(x, t) = u_0(x)$ $(\mu\otimes\L^1)$-a.e. on $(\Omstar\setminus\Omega)\times(0,T)$. To conclude our argument, we are left to show that the limit function $u$ accounts as a variational solution in the sense of Definition \ref{varsoldef}.

To see this, observe that by virtue of \eqref{finiteenergy} it is enough to take $v \in L^1_w(0, T; \BV_{u_0}(\Om))$ such that $\partial_tv \in L^2(\Omstar_T)$ and $v(0) \in L^2(\Omstar)$ with the extra requirement of finite $\BV$-energy, namely
\begin{align}\label{finiteenergy2}
\int_0^T\|Dv(t)\|\dt < \infty,
\end{align}
since otherwise \eqref{varineq}) would be trivial. Let $\vartheta \in (0, \frac{T}{2})$; we introduce a cutoff function $\zeta_\vartheta$ by setting
\begin{align*}
\zeta_{\vartheta}(t) \coloneqq\left\{\begin{array}{cl}
\dfrac{1}{\vartheta}t, & \text{ if } t \in [0, \vartheta), \\ 
1, & \text{ if } t \in [\vartheta, t - \vartheta],  \\ 
\dfrac{1}{\vartheta}(T - t), & \text{ if } t \in (T - \vartheta, T].
\end{array} \right.
\end{align*}
Now, fix $\varepsilon \in (0, 1]$ and set $\varphi \coloneqq v - u_{\varepsilon}$. Since $\varphi \in L^1_w(0, T; \BV_0(\Om))$ has time-derivative $\partial_t\varphi \in L^2(\Omstar_T)$ and $\varphi(0) \in L^2(\Omstar)$ satisfies \eqref{phifinite}, we are entitled to apply \eqref{minrewritten} with $\zeta \coloneqq \zeta_{\vartheta}$. This results in
\begin{align*}
 \int_0^T\|Du_{\varepsilon}(t)\|\dt &
\leq \int_0^T(1 - \zeta_{\vartheta}(t))\|Du_{\varepsilon}(t)\|\d\mu\dt \\ 
& \mskip+20mu + \int_0^T\int_{\Omstar}\zeta_{\vartheta}\partial_tu_{\varepsilon}(v - u_{\varepsilon})\d\mu\dt + \int_0^T\zeta_{\vartheta}(t)\|Dv(t)\|\dt \\
& \mskip+20mu + \varepsilon\int_0^T\int_{\Omstar}[\zeta_{\vartheta}'\partial_tu_{\varepsilon}(v - u_{\varepsilon}) + \zeta_{\vartheta}\partial_tu_{\varepsilon}\partial_t(v - u_{\varepsilon})]\d\mu\dt \\
& \coloneqq\I_{\varepsilon} + \II_{\varepsilon} + \III + \IV_{\varepsilon},
\end{align*}
with the respective interpretations of $\I_{\varepsilon}, \II_{\varepsilon}, \III, \IV_{\varepsilon}$ being obvious. If $\vartheta \geq \varepsilon$, then we can estimate $\I_{\varepsilon}$ by means of \eqref{L1BVboundwitheps} as follows: 
\begin{align*}
 \frac{1}{2\vartheta}\int_0^{\vartheta}\int_{\Omstar}|v - u_{\varepsilon}|^2\d\mu\dt & \leq \Bigg[\Bigg(\frac{1}{2\vartheta}\int_0^{\vartheta}\int_{\Omstar}|v - u_0|^2\d\mu\dt\Bigg)^{\frac{1}{2}} \\ 
& \mskip+100mu + \Bigg(\frac{1}{2\vartheta}\int_0^{\vartheta}\int_{\Omstar}|u_{\varepsilon} - u_0|^2\d\mu\dt\Bigg)^{\frac{1}{2}}\Bigg]^2 \\
& \leq \Bigg[\Bigg(\frac{1}{2\vartheta}\int_0^{\vartheta}\int_{\Omstar}|v - u_0|^2\d\mu\dt\Bigg)^{\frac{1}{2}} + \Bigg(\frac{1}{2\vartheta}\|Du_0\|\Bigg)^{\frac{1}{2}}\Bigg]^2.
\end{align*}
Above, we used \eqref{hoeldereps} with $s = 0$ to estimate the second term. Now, applying Fatou's Lemma together with the convergence $u_{\varepsilon} \to u$ $(\mu \otimes \L^1)$-a.e. on $\Omstar_T$, we get:
\begin{align*}
\liminf_{\varepsilon \searrow 0} -\frac{1}{2\vartheta}\int_{T - \vartheta}^T\int_{\Omstar}|v - u_{\varepsilon}|^2\d\mu\dt & \leq -\frac{1}{2\vartheta}\int_{T - \vartheta}^T\int_{\Omstar}\liminf_{\varepsilon \searrow 0}|v - u_{\varepsilon}|^2\d\mu\dt \\
& = -\frac{1}{2\vartheta}\int_{T - \vartheta}^T\int_{\Omstar}|v - u|^2\d\mu\dt.
\end{align*}
Let us then turn to $\II_\eps$. By the weak convergence $u_{\varepsilon} \weak u$ in $L^2(\Omstar_T)$, we can pass to the limit $\varepsilon \searrow 0$ to find 
\begin{align*}
\liminf_{\varepsilon \searrow 0}\II_{\varepsilon} & \leq \int_0^T\int_{\Omstar}\zeta_{\vartheta}\partial_tv(v- u)\d\mu\dt - \frac{1}{2\vartheta}\int_{T - \vartheta}^T\int_{\Omstar}|v - u|^2\d\mu\dt \\
& \mskip+20mu + \left[\left(\frac{1}{2\vartheta}\int_0^{\vartheta}\int_{\Omstar}|v - u_0|^2\d\mu\dt\right)^{\frac{1}{2}} + \left(\frac{1}{2\vartheta}\|Du_0\|\right)^{\frac{1}{2}}\right]^2.
\end{align*}
The treatment of $\IV_\eps$ is easy since $\partial_tu_{\varepsilon}$ and $u_{\varepsilon}$ are uniformly bounded in $L^2(\Omstar_T)$, so we immediately have $\IV_{\varepsilon} \to 0$ as $\varepsilon \searrow 0$. We now exploit the previous inequalities combined with the lower semicontinuity of the total variation, so that 
\begin{align}\label{prev+lsc}\begin{split}
 \int_0^T\|Du(t)\|\dt &  \leq \liminf_{\varepsilon \searrow 0}\int_0^T\|Du_{\varepsilon}(t)\|\dt \\
& \leq \int_0^T\zeta_{\vartheta}(t)\left[\int_{\Omstar}\partial_tv(v-u)\d\mu + \|Dv(t)\|\right]\dt + 3\vartheta\|Du_0\| \\
& \mskip+20mu + \Bigg[\left(\frac{1}{2\vartheta}\int_0^{\vartheta}\int_{\Omstar}|v - u_0|^2\d\mu\dt\right)^{\frac{1}{2}} + \left(\frac{1}{2\vartheta}\|Du_0\|\right)^{\frac{1}{2}}\Bigg]^2 \\
& \mskip+20mu - \frac{1}{2\vartheta}\int_{T - \vartheta}^T\int_{\Omstar}|v - u|^2\d\mu\dt.\end{split}
\end{align}
Notice that \eqref{prev+lsc} is realized whenever $\vartheta \in (0, \frac{T}{2})$, so we can pass to the limit as $\vartheta \searrow 0$ on the right-hand side. This yields 
\begin{align*}
\int_0^T\|Du(t)\|\dt & \leq \int_0^T\left[\int_{\Omstar}\partial_tv(v-u)\d\mu + \|Dv(t)\|\right]\dt \\
& \mskip+20mu + \frac{1}{2}\|v(0) - u_0\|_{L^2(\Omstar)}^2 - \frac{1}{2}\|(v-u)(T)\|_{L^2(\Omstar)}^2.
\end{align*}
In other words, $u$ is a variational solution on $\Omstar_T$ in the sense of Definition \ref{varsoldef}, and by virtue of  Lemma \ref{comparelem}, $u$ it is unique. As $T > 0$ was taken to be arbitrary, we can take $0 < T_1 < T_2 < \infty$ and denote by $u_1$ and $u_2$  respectively the unique variational solutions on $\Omstar_{T_1}$ and $\Omstar_{T_2}$. This, by repeating the localization procedure as in Section \ref{subsec:local}, brings us to the conclusion that $u_2$ is also a variational solution on the smaller cylinder $\Omstar_{T_1}$, which coincides with $u_1$ therein; namely, $u_1 = u_2$ on $\Omstar_{T_1}$. In doing so, we have just built a unique global variational solution. This concludes the proof of Theorem \ref{maintheo1}.

\section{Parabolic minimizers to the total variation flow}

\begin{definition}\label{def-par-min}
 We say that a measurable function $u:\Omega^*_{\infty}\to\R$ is a \textit{parabolic minimizer} to the total variation flow if and only if for any $T>0$ one has
 \begin{equation*}
  u\in L^1_w(0,T;BV_{u_0}(\Omega))\qquad\text{and}\qquad\int_0^T\|Du(t)\|(\Omstar)\d t<\infty
 \end{equation*}
together with the minimality condition
\begin{equation}\label{tvf-min}
 \int_0^T\left(\int_{\Omega^*}u\cdot\partial_t\varphi\d\mu+\|Du(t)\|(\Omstar)\right)\d t\le\int_0^T\|D(u+\varphi)(t)\|(\Omstar)\d t
\end{equation}
for all $\varphi\in\Lip(\Omstar_T)$ with $\supp \varphi \Subset \Om_T$.
\end{definition}

\begin{proposition}
 Let $u$ be a variational solution in the sense of Definition \ref{varsoldef}. Then, $u$ is a parabolic minimizer to the total variation flow.
\end{proposition}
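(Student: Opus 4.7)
The plan is to feed a carefully chosen comparison map into the variational inequality~(\ref{varineq}) and then integrate by parts in the time variable. Given a test function $\varphi \in \Lip_c(\Omstar_T)$ (so that $\varphi$ vanishes in a neighbourhood of the parabolic boundary and, in the sense consistent with the boundary condition of $u$, also on $\Omstar \setminus \Om$), I will set
\[
 v := u + \varphi
\]
and check its admissibility: since $u \in L^1_w(0,T;\BV_{u_0}(\Om))$ and $\varphi$ is Lipschitz with compact support that does not affect the Cauchy--Dirichlet datum, one has $v \in L^1_w(0,T;\BV_{u_0}(\Om))$; Theorem~\ref{maintheo2} yields $\partial_t u \in L^2(\Omstar_T)$, while $\partial_t \varphi \in L^\infty(\Omstar_T)$ by Lipschitz regularity, so $\partial_t v \in L^2(\Omstar_T)$; and $v(0) = u_0 \in L^2(\Omstar)$ because $\varphi(0) \equiv 0$. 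Moreover $(v-u)(0) = \varphi(0) = 0$ and $(v-u)(T) = \varphi(T) = 0$, so both $L^2$ boundary terms in~(\ref{varineq}) drop out.

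With these observations, the variational inequality reduces to
\[
\int_0^T \|Du(t)\|(\Omstar)\dt \leq \int_0^T \int_{\Omstar} (\partial_t u + \partial_t\varphi)\,\varphi \d\mu\dt + \int_0^T \|D(u+\varphi)(t)\|(\Omstar)\dt.
\]
Next I will simplify the time-derivative contributions in two ways. First, using the chain rule $\partial_t\varphi \cdot \varphi = \tfrac{1}{2}\partial_t(\varphi^2)$ and the vanishing of $\varphi$ at the endpoints $t=0,T$, the cross term
\[
\int_0^T \int_{\Omstar} \partial_t \varphi \cdot \varphi \d\mu\dt = \tfrac{1}{2}\int_{\Omstar}\bigl[\varphi(T)^2 - \varphi(0)^2\bigr]\d\mu = 0.
\]
Second, since $\partial_t u \in L^2(\Omstar_T)$ and $\varphi \in \Lip_c(\Omstar_T)$, an integration by parts in time gives
\[
\int_0^T \int_{\Omstar} \partial_t u \cdot \varphi \d\mu\dt = -\int_0^T \int_{\Omstar} u \cdot \partial_t \varphi \d\mu\dt,
\]
again with no boundary contribution because $\varphi$ vanishes at $t=0$ and $t=T$. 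Inserting these two identities into the estimate above and rearranging produces exactly the parabolic minimality condition~(\ref{tvf-min}). Finally, the finiteness $\int_0^T \|Du(t)\|(\Omstar)\dt < \infty$ required by Definition~\ref{def-par-min} is already recorded just after Definition~\ref{varsoldef}.

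I expect the genuinely delicate point to be Step~1, namely checking admissibility of $v = u + \varphi$ in the correct function space: one needs $v = u_0$ on $(\Omstar\setminus\Om)\times(0,T)$, which forces a careful interpretation of ``compact support'' for $\varphi$ so that it is compatible with the lateral boundary condition, and one also needs to know that adding a Lipschitz compactly supported perturbation to a function in $L^1_w(0,T;\BV(\Omstar))$ keeps it in the same weak parabolic class (which follows from linearity of the divergence pairing in~(\ref{pairing})). Once admissibility is secured, Steps~2 and~3 are purely algebraic and use only the fundamental theorem of calculus in time, available thanks to Theorem~\ref{maintheo2}.
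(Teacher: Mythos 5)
Your proof is correct, and it reaches \eqref{tvf-min} by a slightly more direct route than the paper. The paper perturbs by a small multiple of the test function: it inserts $v=u+s\varphi$, $s\in(0,1]$, into \eqref{varineq}, integrates by parts in time (no boundary terms, since $\varphi(0)=0=\varphi(T)$), uses the convexity of the total variation in the form $\|D(u+s\varphi)(t)\|(\Omstar)\le(1-s)\|Du(t)\|(\Omstar)+s\|D(u+\varphi)(t)\|(\Omstar)$, subtracts $(1-s)\int_0^T\|Du(t)\|(\Omstar)\dt$, divides by $s$ and lets $s\searrow 0$. You instead take $s=1$, i.e. $v=u+\varphi$, and observe that the quadratic term cancels exactly, $\int_0^T\int_{\Omstar}\varphi\,\partial_t\varphi\d\mu\dt=\tfrac12\int_{\Omstar}\bigl[\varphi(T)^2-\varphi(0)^2\bigr]\d\mu=0$, so after the same integration by parts applied to $\int_0^T\int_{\Omstar}\partial_t u\,\varphi\d\mu\dt$ the resulting inequality is already \eqref{tvf-min}; no convexity step or limit in $s$ is needed. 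Both arguments rest on the same inputs: Theorem \ref{maintheo2} for $\partial_t u\in L^2(\Omstar_T)$, the identity $u(0)=u_0$, and the vanishing of the two $L^2$ boundary terms in \eqref{varineq} because $\varphi$ vanishes at $t=0$ and $t=T$. What the paper's scaling argument buys is robustness: it is the standard template (as in \cite{boegeleintv}) and would still work if the cross term did not vanish identically, whereas your shortcut exploits the specific structure $\partial_t v\,(v-u)=\partial_t(u+\varphi)\,\varphi$. The admissibility caveat you flag --- that $v\in L^1_w(0,T;\BV_{u_0}(\Om))$ forces $\varphi=0$ a.e. on $(\Omstar\setminus\Om)\times(0,T)$, which is not literally implied by $\varphi\in\Lip_c(\Omstar_T)$ --- is legitimate, but the paper's own proof makes the same implicit assumption when it declares $v=u+s\varphi$ to be in that class, so it is not a gap of your argument relative to the paper; your remaining checks (weak measurability of $u+\varphi$ by linearity of the pairing, finiteness of $\int_0^T\|D(u+\varphi)(t)\|(\Omstar)\dt$ for Lipschitz $\varphi$ on the bounded set $\Omstar$) are sound.
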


\begin{proof}

 Let us fix $T>0$ and consider a test function $\varphi$ as in Definition \ref{def-par-min}. Observe that Theorem \ref{maintheo2} entails $\partial_t u\in L^2(\Omega^*_T)$.
 In order to prove \eqref{tvf-min}, we consider a map $v\in L^1_w(0,T;BV_{u_0}(\Omega))$ of the form $v=u+s\varphi$, where $s>0$ is a comparison function as in the variational inequality (\ref{varineq}). It is not restrictive to ask
 \begin{equation*}
  \int_0^T\|D(u+\varphi)(t)\|(\Omstar)\d t<\infty,
 \end{equation*}
because otherwise \eqref{tvf-min} would follow immediately.

Now, since $v(0)=u_0$, $\partial_t v\in L^2(\Omega^*_{\infty})$ and $u(T)=v(T)$, we first obtain the estimate 
\begin{align}\label{bound-var} 
\begin{aligned}
\int_0^T\|Du(t)\|(\Omstar)\d t \le\int_0^T\left[\int_{\Omega^*}\partial_t(u+s\varphi)s\varphi\d\mu+\|D(u+s\varphi)(t)\|(\Omstar)\right]\d t.
\end{aligned}
\end{align}
We integrate by parts in the first term at the right-hand side of \eqref{bound-var} and we exploit the fact that we do not get any boundary term, since $\varphi(0)=0=\varphi(T)$; in the second term, we use instead the convexity of the total variation. This yields
\begin{align*}
 \int_0^T\bigg[\int_{\Omega^*}s(u+s\varphi)\partial_t\varphi\d\mu & +\|Du(t)\|(\Omstar)\bigg]\d t \\
&  \le \int_0^T\Big[(1-s)\|Du(t)\|(\Omstar)+s\|D(u+\varphi)(t)\|(\Omstar)\Big]\d t.
\end{align*}
Then, we subtract the quantity
\begin{equation*}
 (1-s)\int_0^T\|Du(t)\|(\Omstar)\d t
\end{equation*}
from both sides in the previous inequality  and divide the resulting expression by $s>0$. Thus, there holds
\begin{equation}\label{sub-div-est}
\int_0^T\left[\int_{\Omega^*}(u+s\varphi)\partial_t\varphi\d\mu+\|Du(t)\|(\Omstar)\right]\d t\le\int_0^T\|D(u+\varphi)(t)\|(\Omstar)\d t.
\end{equation}
Passing to the limit as $s \searrow 0$ in \eqref{sub-div-est}, we eventually arrive at
\begin{align*}
\int_0^T\left[\int_{\Omega^*}u\partial_t\varphi\d\mu+\|Du(t)\|(\Omega)\right]\d t\le \int_0^T\|D(u+\varphi)(t)\|(\Omega)\d t.
\end{align*}
Now, since $\varphi\in\Lip_c(\Omega_T)$ was taken to be arbitrary, we have just proven that $u$ is a parabolic minimizer to the total variation flow, and the claim follows.

\end{proof}

\appendix
\section{Appendix: Compactness}\label{app-compactness}
\renewcommand{\thesection}{A}
\setcounter{theorem}{0}
\setcounter{equation}{0}


Here, we are going to provide a compactness result for sequences in $L^1_{w}(0,T;\BV(\Omega))$; this  has been used repeatedly in Sections \ref{subsec:sequence}--\ref{subsec-limit} to conclude with convergence of subsequences in $L^{1}$. Since with our choices for the definition of $L^1_w(0,T;\BV(\Omega))$ by means of Lipschitz derivations the readaptation of this Lemma to the metric setting turns out to be immediate and analogous to the Euclidean case, we are going to state the result with no proof, referring the reader to \cite[Section 8]{boegeleintv} and \cite[Theorem 1]{Simon} for the complete arguments.

\medskip

\begin{lemma}\label{compactness}
Let $\Omega\subset\X$ be an open set in a metric measure space $(\X,d,\mu)$ equipped with a doubling measure $\mu$ and satisfying a weak (1,1)--Poincar\'e inequality. Let $L^1_w(0,T;BV(\Omega))$ denote the weak $L^1$--$BV$ space as defined in Section \ref{sub-weak-par}.

If a sequence $(u_{j})_{j\in\mathbb{N}}\subset L_{w}^{1}(0,T;\BV(\Omega))$ satisfies
\begin{equation*}\label{compcond}
\sup_{j\in\mathbb{N}}\left[\Vert u_{j}\Vert_{L^{1}(\Omega_{T})}+\Vert \partial_{t}u_{j}\Vert_{L^{1}(\Omega_{T})}+\int_{0}^{T}\Vert Du_{j}(t)\Vert(\Omega)\d t\right]<\infty,
\end{equation*}
then $(u_{j})_{j\in\mathbb{N}}$ is relatively compact in $L^{1}(\Omega_{T})$.
\end{lemma}


\vspace{1cm}

{\bf Acknowledgements \& Funding}

This work was partially supported by the grant DFG-Project HA 7610/1-1 ``Existenz- und Regularit\"atsaussagen für parabolische Quasiminimierer auf metrischen Ma{\ss}r\"aumen'' and by the Academy of Finland. M. Collins expresses his gratitude to the Friedrich Naumann Foundation for Freedom that supported him through a postgraduate scholarship. He also thanks the Deparment of Mathematics and System Analysis at Aalto University for kindly receiving him in September 2019 and February 2020, while V. Buffa and C. Pacchiano Camacho thank the Department of Mathematics at FAU Erlangen for the kind hospitality during their visit in October 2019.

\bigskip







\phantomsection
\addcontentsline{toc}{section}{References}

\end{document}